\newcommand{\redsout}{\bgroup\markoverwith{\textcolor{red}{\rule[0.5ex]{2pt}{.4pt}}}\ULon}
\newcommand{\LV}{\left|}
\newcommand{\RV}{\right|}
\newcommand{\LC}{\left(}
\newcommand{\RC}{\right)}
\newcommand{\p}{\partial}
\newcommand{\tx}{\langle \tau,\xi\rangle  }
\def \supp{ \mathrm{supp }}
\def \Om{\Omega}
\def \CHT{C([0,T],H^2(\Om))}
\def \CLT{C^1([0,T],L^2(\Om))}
\def \HLT{H^1(0,T;L^2(\Om))}
\numberwithin{equation}{section}
\newtheorem{theorem}{Theorem}[section]
\newtheorem{corollary}[theorem]{Corollary}
\newtheorem{proposition}{Proposition}[section]
\newtheorem{lemma}{Lemma}[section]
\newtheorem{remark}{Remark}[section]
\newcommand{\R}{\mathbb R}
 \definecolor{Magenta}{cmyk}{0,1,0,0}
\definecolor{Lightred}{rgb}{1,0.89,0.9}
\definecolor{mycolor}{rgb}{0.122, 0.435, 0.698}
\definecolor{aliceblue}{rgb}{0.94, 0.97, 1.0}
\newmdenv[innerlinewidth=0.5pt, roundcorner=4pt,linecolor=mycolor,innerleftmargin=6pt,
innerrightmargin=6pt,innertopmargin=6pt,innerbottommargin=6pt]{mybox}
\newmdenv[backgroundcolor=aliceblue,innerlinewidth=0.5pt, roundcorner=4pt,linecolor=mycolor,innerleftmargin=6pt,
innerrightmargin=6pt,innertopmargin=6pt,innerbottommargin=6pt]{mybox1}
\author[Lai]{Ru-Yu Lai}
\address{School of Mathematics, University of Minnesota, Minneapolis, MN 55455, USA}
\curraddr{}
\email{rylai@umn.edu}
\author[Lu]{Xuezhu Lu}
\address{Department of Mathematics, Northeastern University, Boston, MA 02115, USA}
\curraddr{}
\email{lu.xuez@northeastern.edu}
\author[Zhou]{Ting Zhou}
\address{School of Mathematical Sciences, Zhejiang University, Hangzhou, China}
\curraddr{}
\email{ting$\_$zhou@zju.edu.cn}
\thanks{\textbf{Key words}: Nonlinearity,  Inverse problems, Time-dependent Schr\"odinger equation}
\title{Partial Data Inverse Problems for the Nonlinear Time-Schr\"odinger Equation} 
\begin{document}

\maketitle
\tableofcontents

\begin{abstract}
In this paper we prove the uniqueness and stability in determining a time-dependent nonlinear coefficient $\beta(t,x)$ in the Schr\"odinger equation $(i\p_t+\Delta+q(t,x))u+\beta u^2=0$, from the boundary Dirichlet-to-Neumann (DN) map. In particular, we are interested in the partial data problem, in which the DN-map is measured on a proper subset of the boundary.  We show two results: a local uniqueness of the coefficient at the points where certain type of geometric optics (GO) solutions can reach; and a stability estimate based on the unique continuation property for the linear equation. 
\end{abstract}

\section{Introduction}
We investigate a partial data inverse problem for the time-dependent Schr\"odinger equation with a nonlinear term, for example, in modeling the recovery of the nonlinear electromagnetic second order polarization potential from the partial boundary measurements of electromagnetic fields. 
Let $\Omega\subset\mathbb{R}^n$, $n\geq 3$ be a bounded and convex domain with smooth boundary $\p\Omega$. For $T>0$, we denote $Q:=(0,T)\times \Omega$ and $\Sigma:=(0,T)\times \p\Omega$. 
Suppose $\Gamma$ is an open proper subset of the boundary $\p\Omega$ and denote
$$
\Sigma^\sharp : =(0,T)\times \Gamma.
$$
For $q(t,x)\in C^\infty(Q)$ and $\beta(t,x)\in C^\infty(Q)$, 
we consider the nonlinear dynamic Schr\"odinger equation
\begin{align}\label{IBVP nonlinear}
	\left\{\begin{array}{rcll}
		\LC i \p_t +\Delta + q(t,x)\RC u(t,x) + \beta(t,x)u(t,x)^2 &=& 0 &\quad \hbox{on }Q,\\
		u(t,x)&=&f &\quad \hbox{on }\Sigma,\\
		u(t,x)&=&0 &\quad \hbox{on }\{0\}\times \Omega,\\
		\end{array} \right. 
\end{align} 
where  $\Delta u:=\sum^n_{j=1}{\p^2\over\p x_j^2}$ is the spatial Laplacian.
 
Based on the well-posedness result in Proposition~\ref{prop:forward nonlinear}, the Dirichlet-to-Neumann (DN) map $\Lambda_{q,\beta}$ is well-defined by
\begin{align*}
\Lambda_{q,\beta}:  
  f &\mapsto \p_\nu u\big|_{\Sigma^\sharp},\qquad \quad f\in \mathcal{S}_\lambda(\Sigma)
\end{align*}
for $\lambda>0$ sufficiently small (see \eqref{eqn:S_eps} for the definition of $\mathcal S_\lambda(\Sigma)$, where $\p_\nu u:={\p u\over \p\nu}$ and $\nu(x)$ is the unit outer normal to $\p\Omega$ at the point $x\in \p\Omega$. The inverse problem we consider in this paper is the determination of the nonlinear potential $\beta(t,x)$ from the partial DN-map $\Lambda_{q,\beta}$.

\subsection{Main results}

For a set $B\subset \Omega$, we denote $\mathcal{M}_{B}$ by
$$
\mathcal{M}_{B}:=\{g\in C^\infty(Q):\, \|g\|_{C^\infty(Q)}\leq m_0, \quad \hbox{and } g=0\hbox{ on }(0,T)\times B\}.
$$
for some positive constant $m_0$.
Let $\mathcal{O}\subset \Omega$ be an open neighborhood of the boundary $\p\Omega$ and $\mathcal{O}'\subset \Omega$ be an open neighborhood of $\Gamma^c:=\p\Omega\setminus\Gamma$.  \\

We define an open subset $\Omega_\Gamma$ of $\Omega$ as
\begin{align}\label{DEF:Omega Gamma}
 \Omega_\Gamma:=\big\{p\in\Omega~: \LC\LC \gamma_{p,\omega_1}\cup\gamma_{p,\omega_2}\cup\gamma_{p,\omega_1+\omega_2}\RC  \cap\partial\Omega\RC \subset\Gamma \textrm{ for some } \omega_1,\omega_2\in \mathbb S^{n-1}, \omega_1\perp\omega_2\big\},
\end{align}
where $\gamma_{p,\omega}$ denotes the straight line through a point $p$ in a direction $\omega$ in $\R^n$ and $\mathbb{S}^{n-1}$ is a unit sphere at the origin.
 
Our main results are stated as follows:
\begin{theorem}[Local uniqueness]\label{thm:local uniqueness}
	Assume $q$ and $\beta_j$ are in $C^{\infty}(Q)$ for $j=1,\,2$.  Suppose $\Lambda_{q,\beta_1}(f)=\Lambda_{q,\beta_2}(f)$ for all $f\in \mathcal{S}_\lambda(\Sigma)$ with support satisfying $\text{supp}(f)\subset \Sigma^\sharp$. Then $\beta_1(t,x)=\beta_2(t,x)$ for all $(t,x) \in(0,T)\times \Omega_\Gamma.$
\end{theorem}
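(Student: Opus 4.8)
The plan is to prove this by a second‑order linearization of the DN map combined with a geometric‑optics (GO) construction for the linear operator $L:=i\p_t+\Delta+q$ on $Q$. Using Proposition~\ref{prop:forward nonlinear}, for $|\varepsilon|,|\varepsilon'|$ small and boundary data $\varepsilon f^{(1)}+\varepsilon' f^{(2)}$ with $\mathrm{supp}(f^{(k)})\subset\Sigma^\sharp$, expand the solution of \eqref{IBVP nonlinear}; the first‑order terms are the linear solutions $v_1^{(k)}$ with $Lv_1^{(k)}=0$, $v_1^{(k)}|_\Sigma=f^{(k)}$, $v_1^{(k)}|_{t=0}=0$, independent of $\beta_j$. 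Applying $\p_\varepsilon\p_{\varepsilon'}|_{\varepsilon=\varepsilon'=0}$ to the identity $\Lambda_{q,\beta_1}=\Lambda_{q,\beta_2}$ on such data isolates the quadratic interaction: letting $w^{(j)}$ solve $Lw^{(j)}=-2\beta_j\,v_1^{(1)}v_1^{(2)}$ in $Q$ with zero lateral and initial data, the difference $w:=w^{(1)}-w^{(2)}$ satisfies
\begin{align*}
Lw=2\mu\,v_1^{(1)}v_1^{(2)}\ \text{ in }Q,\qquad w|_\Sigma=0,\qquad w|_{t=0}=0,\qquad \p_\nu w|_{\Sigma^\sharp}=0,
\end{align*}
where $\mu:=\beta_2-\beta_1$ (rescaling $f^{(k)}\mapsto\varepsilon f^{(k)}$ keeps the data in $\mathcal S_\lambda(\Sigma)$).

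Next I would test $w$ against a solution $\psi$ of the transposed equation $(-i\p_t+\Delta+q)\psi=0$ in $Q$ with $\psi|_{t=T}=0$ and lateral trace supported in $\Sigma^\sharp$. Integrating $Lw=2\mu v_1^{(1)}v_1^{(2)}$ against $\psi$ over $Q$ and integrating by parts, all boundary terms vanish: the $t=0$ and $t=T$ traces because $w|_{t=0}=0$ and $\psi|_{t=T}=0$; the lateral ones because $w|_\Sigma=0$, while $\p_\nu w=0$ on $\Sigma^\sharp$ and $\psi=0$ on $\Sigma\setminus\Sigma^\sharp$. This produces the orthogonality relation
\begin{align*}
\int_Q \mu(t,x)\,v_1^{(1)}(t,x)\,v_1^{(2)}(t,x)\,\psi(t,x)\,dx\,dt=0
\end{align*}
for all admissible $v_1^{(1)},v_1^{(2)},\psi$.

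The crux is to insert GO solutions adapted to a point $p\in\Omega_\Gamma$. Choose $\omega_1\perp\omega_2$ in $\mathbb S^{n-1}$ as in \eqref{DEF:Omega Gamma} and set $\omega_3:=\omega_1+\omega_2$. For a large parameter $\rho$ I would build WKB solutions $v_1^{(1)}=e^{i\rho\,\omega_1\cdot x-i\rho^2 t}A_\rho+r_1$, $v_1^{(2)}=e^{i\rho\,\omega_2\cdot x-i\rho^2 t}B_\rho+r_2$, $\psi=e^{-i\rho\,\omega_3\cdot x+i\rho^2|\omega_3|^2 t}C_\rho+r_3$, with $A_\rho=\sum_{j\ge0}\rho^{-j}a_j$ (and similarly $B_\rho,C_\rho$), where the leading amplitudes $a_0,b_0,c_0$ are constant along $\omega_1,\omega_2,\omega_3$ --- as the transport equations force --- and supported in thin tubes about $\gamma_{p,\omega_1},\gamma_{p,\omega_2},\gamma_{p,\omega_3}$ and away from $t=0,T$, with a free time profile carried in $a_0$; the remainders $r_k$ solve zero lateral/initial (respectively final) problems with source $O(\rho^{-1})$ and so tend to $0$ in $L^2(Q)$ by energy estimates for $L$ and its transpose. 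Because $\Omega$ is convex and $p\in\Omega_\Gamma$, for thin enough tubes the traces $v_1^{(1)}|_\Sigma$, $v_1^{(2)}|_\Sigma$, $\psi|_\Sigma$ all lie in $\Sigma^\sharp$, as required. The decisive point is that, since $|\omega_3|^2=|\omega_1|^2+|\omega_2|^2$ by orthogonality, the three phases multiply to $\exp\!\big(i\rho^2(-1-1+|\omega_3|^2)t\big)=\exp\!\big(2i\rho^2\,\omega_1\cdot\omega_2\,t\big)=1$, so the triple product $v_1^{(1)}v_1^{(2)}\psi$ carries no oscillation at all --- which is precisely why the third direction $\omega_1+\omega_2$, and the orthogonality $\omega_1\perp\omega_2$, appear in the definition of $\Omega_\Gamma$. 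Substituting into the orthogonality relation and letting $\rho\to\infty$, the remainders and all negative powers of $\rho$ drop out, leaving $\int_Q\mu\,a_0b_0c_0\,dx\,dt=0$ for every admissible $(a_0,b_0,c_0)$. Taking $a_0=\chi(t)\phi_1(x_1')$, $b_0=\phi_2(x_2')$, $c_0=\phi_3(x_3')$ --- with $x_k'$ the component of $x$ orthogonal to $\omega_k$ and the $\phi_k$ bumps whose product concentrates near $p$ --- then using that $\chi\in C_c^\infty(0,T)$ is arbitrary and shrinking the $\phi_k$, we get $\mu(t,p)=0$ for all $t\in(0,T)$. Since $p\in\Omega_\Gamma$ was arbitrary, $\beta_1=\beta_2$ on $(0,T)\times\Omega_\Gamma$.

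I expect the GO construction to be the main obstacle. One must (i) use the resonance $\omega_3=\omega_1+\omega_2$ together with $\omega_1\perp\omega_2$ so that the triple product is non‑oscillatory --- otherwise a Riemann--Lebesgue argument in the $t$‑variable annihilates the identity and yields no information; (ii) simultaneously arrange that all three lateral traces are supported in $\Gamma$, which is exactly the content of $p\in\Omega_\Gamma$, and where convexity is used to control how the tubes exit $\p\Omega$; and (iii) carry the WKB expansion and its remainder estimates far enough to discard every mixed term in the cubic integrand as $\rho\to\infty$. The linearization and the integration‑by‑parts identity are comparatively routine given Proposition~\ref{prop:forward nonlinear}.
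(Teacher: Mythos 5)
Your proposal is correct and shares the paper's overall skeleton: a second-order linearization of the DN map, the integral identity $\int_Q(\beta_1-\beta_2)\,U_1U_2\,\overline{U}_0\,dx\,dt=0$ (you pair against a solution of the transposed equation $(-i\p_t+\Delta+q)\psi=0$ with zero final data instead of the conjugated adjoint solution $\overline{U}_0$, which is an equivalent bookkeeping), and three GO solutions attached to the lines $\gamma_{p,\omega_1},\gamma_{p,\omega_2},\gamma_{p,\omega_1+\omega_2}$ with the resonance conditions $\omega_0=\omega_1+\omega_2$ and $|\omega_0|^2=|\omega_1|^2+|\omega_2|^2$ removing all oscillation from the triple product. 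Where you genuinely diverge is the GO construction itself: the paper uses gaussian beam quasimodes with a complex quadratic phase (the Hessian solving a matrix Riccati equation with $\mathrm{Im}\,H>0$), so the beams concentrate at scale $\rho^{-1/2}$, concentration at $p$ is automatic as $\rho\to\infty$, and the limit is evaluated by a change of variables plus dominated convergence, requiring the nonvanishing of the Fresnel factor $\int e^{\frac i2\mathcal H(p)\tilde x\cdot\tilde x}\,d\tilde x$; you instead take real linear phases with leading amplitudes compactly supported in tubes of fixed width $\eta$, send $\rho\to\infty$ first to obtain $\int_Q\mu\,a_0b_0c_0\,dx\,dt=0$, and only afterwards shrink $\eta$ to localize at $p$. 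Your route is more elementary (no Riccati/gaussian machinery, no stationary-phase evaluation), and it is viable because the transport hierarchy \eqref{CON:a_k}, solved by integrating along $\omega$, preserves the transverse tube support, so all lateral traces remain in $\Sigma^\sharp$ exactly when $p\in\Omega_\Gamma$, convexity ensuring a thin tube meets $\p\Omega$ only near the two exit points of the line. Two details need tightening: all three leading amplitudes, not only $a_0$, must carry time cutoffs vanishing near $t=0$ and $t=T$ (your final choice $b_0=\phi_2(x_2')$, $c_0=\phi_3(x_3')$ is incompatible with the zero initial, respectively final, conditions unless the remainders absorb $O(1)$ data, destroying their smallness), and the remainder analysis needs more than one corrector together with $H^m$ control for $m>\frac{n+1}2$ rather than plain $L^2$ bounds, since the energy estimate costs powers of $\rho$ through time derivatives of $e^{i\Phi}$ and the terms with two or three remainders require an $L^\infty$ factor; your item (iii) anticipates this, but the stated ``source $O(\rho^{-1})$, remainders tend to $0$ in $L^2$'' underestimates what is required.
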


\begin{theorem}[Stability estimate]\label{thm:stability}
Assume $\beta_j \in C^{\infty}(Q)$ for $j=1,\,2$. Suppose that $(q,\beta_1-\beta_2)\in \mathcal{M}_{\mathcal{O}}\times\mathcal{M}_{\mathcal{O}}$. Let $\Lambda_{q,\beta_j}:\mathcal{S}_\lambda(\Sigma)\rightarrow L^2(\Sigma^\sharp)$ be the Dirichlet-to-Neumann maps of the nonlinear Schr\"odinger equation \eqref{IBVP nonlinear} associated with $\beta_j$ for $j=1,\,2$.  
There exists a sufficiently small $\delta_0>0$ so that if the DN maps satisfy
\begin{align*} 
\|(\Lambda_{q,\beta_1}-\Lambda_{q,\beta_2})f\|_{L^2(\Sigma^\sharp)}\leq \delta \qquad \textrm{ for all }f\in \mathcal{S}_\lambda(\Sigma),
\end{align*}
for some $\delta\in (0,\delta_0)$, then for any $0<T^*<T$, there exist constants $C>0$  independent of $\delta$  
and $0<\sigma<1$ such that the following stability estimate holds:
\begin{align*}
    \|\beta_1-\beta_2\|_{L^2((0,T^*)\times \Omega)}\leq C \LC \delta^{1\over 12}+|\log(\delta)|^{-\sigma}\RC.
\end{align*}
\end{theorem}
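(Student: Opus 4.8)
\medskip
\noindent\textbf{Proof strategy.} The plan is to combine a second-order linearization of the solution map of \eqref{IBVP nonlinear} with a geometric optics construction for the linear time-dependent Schr\"odinger operator and a quantitative unique continuation argument. Since the nonlinearity $\beta u^2$ is quadratic, two derivatives suffice: for boundary data $f=\varepsilon_1 f_1+\varepsilon_2 f_2$ with $\mathrm{supp}(f_j)\subset\Sigma^\sharp$ and $|\varepsilon_1|,|\varepsilon_2|$ small enough that $f\in\mathcal S_\lambda(\Sigma)$, Proposition~\ref{prop:forward nonlinear} yields a solution $u=u(\varepsilon_1,\varepsilon_2)$ of \eqref{IBVP nonlinear} depending smoothly on $(\varepsilon_1,\varepsilon_2)$. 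The first derivatives $u_j:=\partial_{\varepsilon_j}u|_{\varepsilon=0}$ solve the \emph{linear} problem $(i\partial_t+\Delta+q)u_j=0$, $u_j|_\Sigma=f_j$, $u_j|_{t=0}=0$ (independently of $\beta$), while $w:=\partial_{\varepsilon_1}\partial_{\varepsilon_2}u|_{\varepsilon=0}$ solves $(i\partial_t+\Delta+q)w=-2\beta u_1u_2$ with vanishing Dirichlet and initial data. Testing against a solution $v$ of the adjoint problem $(i\partial_t+\Delta+\overline q)v=0$, $v|_{t=T}=0$, $v|_{\Sigma\setminus\Sigma^\sharp}=0$, and integrating by parts (all interior and temporal terms drop) gives $-2\int_Q\beta\,u_1u_2\overline v\,dx\,dt=\int_{\Sigma^\sharp}\partial_\nu w\,\overline v\,dS$; subtracting the identities for $\beta_1$ and $\beta_2$ (same $q$, hence the same $u_1,u_2,v$) yields
\begin{equation*}
-2\int_Q(\beta_1-\beta_2)\,u_1u_2\overline v\,dx\,dt=\int_{\Sigma^\sharp}\partial_\nu(w_1-w_2)\,\overline v\,dS ,
\end{equation*}
whose right-hand side is the mixed second $\varepsilon$-derivative at $\varepsilon=0$ of $(\Lambda_{q,\beta_1}-\Lambda_{q,\beta_2})(\varepsilon_1f_1+\varepsilon_2f_2)$ tested against $v|_{\Sigma^\sharp}$. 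Because the first linearizations of $\Lambda_{q,\beta_1}$ and $\Lambda_{q,\beta_2}$ coincide, this map vanishes to second order at $\varepsilon=0$; a Taylor expansion together with $\|(\Lambda_{q,\beta_1}-\Lambda_{q,\beta_2})f\|_{L^2(\Sigma^\sharp)}\le\delta$ and the a priori control of the higher $\varepsilon$-derivatives of $u$ (from the well-posedness estimates and $(q,\beta_1-\beta_2)\in\mathcal M_{\mathcal O}\times\mathcal M_{\mathcal O}$) then gives, after optimizing the size of $\varepsilon$,
\begin{equation*}
\Big|\int_Q(\beta_1-\beta_2)\,u_1u_2\overline v\,dx\,dt\Big|\le C\,\delta^{1/3}\,\|f_1\|\,\|f_2\|\,\|v|_{\Sigma^\sharp}\|
\end{equation*}
in suitable Sobolev norms, for every admissible triple $(u_1,u_2,v)$.

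Next I would choose $(u_1,u_2,v)$ to be geometric optics solutions. Fixing $p\in\Omega_\Gamma$ and orthogonal unit vectors $\omega_1,\omega_2$ as in \eqref{DEF:Omega Gamma}, set $\omega_3=(\omega_1+\omega_2)/|\omega_1+\omega_2|$, so that $\gamma_{p,\omega_1},\gamma_{p,\omega_2},\gamma_{p,\omega_3}$ meet $\partial\Omega$ inside $\Gamma$. For large $\rho>0$ one builds, in WKB/Gaussian-beam form, $u_1,u_2$ for the linear equation and $v$ for its adjoint, of the shape $a_j\,e^{i\rho(\omega_j\cdot x-\rho t)}+r_j$ and $b\,e^{i\sqrt2\rho(\omega_3\cdot x-\sqrt2\rho t)}+r_v$, concentrated in $O(\rho^{-1/2})$-tubes about the respective lines, with amplitudes solving transport equations and carrying a free time cut-off $\chi(t)$, and remainders of size $O(\rho^{-1})$ in the norms above. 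Since $q\equiv0$ on the neighborhood $\mathcal O$ of $\partial\Omega$ these beams are exact near $\partial\Omega$, and by convexity of $\Omega$, the definition of $\Omega_\Gamma$, and the neighborhood $\mathcal O'$ of $\Gamma^c$, their Dirichlet traces and normal derivatives are, for $\rho$ large, supported in $\Sigma^\sharp$ up to a negligible error. The dispersion relation makes the spatial and temporal phases of $u_1u_2\overline v$ cancel exactly (using $|\omega_1+\omega_2|^2=2=|\omega_1|^2+|\omega_2|^2$), so $u_1u_2\overline v$ converges to a non-oscillatory amplitude concentrated at the common point $p$, and the leading term $-2\int_Q(\beta_1-\beta_2)\,a_1a_2\overline b\,dx\,dt$ of the identity tends, as $\rho\to\infty$, to a multiple of the average of $(\beta_1-\beta_2)(\cdot,p)$ against $\chi$. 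Tracking the errors gives, for some exponent $M$ depending only on the Sobolev indices and for all $p\in\Omega_\Gamma$ and $t$, $|(\beta_1-\beta_2)(t,p)|\le C(\rho^{M}\delta^{1/3}+\rho^{-1/2})$; optimizing over $\rho$ and interpolating with the a priori bound $\|\beta_1-\beta_2\|_{C^k(Q)}\le 2m_0$ yields a H\"older estimate $\|\beta_1-\beta_2\|_{L^2((0,T^*)\times K)}\le C\delta^{1/12}$ on any compact $K\Subset\Omega_\Gamma$, the exponent $\tfrac1{12}$ arising from the combined losses in the difference-quotient step, the remainder decay, and the interpolation.

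Finally, to control $\beta_1-\beta_2$ on the rest of $(0,T^*)\times\Omega$, I would replace, for a point whose three beams would exit partly through $\Gamma^c$, the ideal beams by genuine solutions with Dirichlet data in $\Sigma^\sharp$ that approximate them on a fixed interior compact set; the existence and norm bounds of such approximants are exactly the quantitative Runge approximation property — equivalent to unique continuation for $i\partial_t+\Delta+q$ and its adjoint — an interior error $\eta$ being bought at the price of a global norm of size $e^{C/\eta^{a}}$ for some $a>0$. Inserting these into the bound of the first paragraph and optimizing over $\eta$ turns the H\"older control into a logarithmic one; combined with $\beta_1-\beta_2\equiv0$ on $(0,T)\times\mathcal O$ and with $T^*<T$ (so that the underlying energy and unique continuation estimates hold uniformly away from $t=T$), this gives
\begin{equation*}
\|\beta_1-\beta_2\|_{L^2((0,T^*)\times\Omega)}\le C\big(\delta^{1/12}+|\log\delta|^{-\sigma}\big)
\end{equation*}
for some $0<\sigma<1$, which is the assertion.

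I expect the main obstacle to be the geometric optics step: constructing solutions of the \emph{variable-coefficient} linear Schr\"odinger equation whose remainders are controlled in precisely the norms in which the trace estimate of the first paragraph is phrased, whose boundary data localize in $\Sigma^\sharp$, and whose triple product concentrates (without residual oscillation) at the prescribed point; and then upgrading the resulting integral identity to a quantitative pointwise bound, which forces a careful balancing of the linearization parameter against $\rho$ and makes essential use of the a priori regularity encoded in $\mathcal M_{\mathcal O}$. The quantification of the Runge approximation in the last step is what degrades the final modulus of continuity from H\"older to logarithmic.
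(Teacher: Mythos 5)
Your overall skeleton --- second-order linearization with the optimization $\varepsilon\sim\delta^{1/3}$, GO solutions whose phases cancel under $\omega_0=\omega_1+\omega_2$, $|\omega_0|^2=|\omega_1|^2+|\omega_2|^2$, and a unique-continuation ingredient to compensate for the partial measurement --- matches the paper's, and your integral identity with an adjoint solution $v$ whose Dirichlet trace is supported in $\Sigma^\sharp$ is sound. The decisive step, however, is left unproved. Under the hypotheses of Theorem~\ref{thm:stability} the set $\Omega_\Gamma$ in \eqref{DEF:Omega Gamma} plays no role and can be small or even empty (for small $\Gamma$ no point has all three lines exiting through $\Gamma$), so your Gaussian-beam/H\"older estimate on compact subsets of $\Omega_\Gamma$ cannot carry the theorem; the entire bound on $(0,T^*)\times\Omega$ then rests on your third paragraph, i.e.\ on a \emph{quantitative} Runge approximation for the dynamic Schr\"odinger equation: approximating an arbitrary adjoint solution on $\overline{\Omega\setminus\mathcal O}$ within error $\eta$ by solutions with lateral data supported in $\Sigma^\sharp$, at cost $e^{C/\eta^{a}}$. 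This is asserted as ``exactly the quantitative Runge approximation property --- equivalent to unique continuation,'' but only the \emph{qualitative} density follows formally from UCP via Hahn--Banach; the quantitative version with an explicit exponential cost requires a stability-of-UCP estimate in dual norms plus a duality/minimization argument, which is work of the same order as (and not contained in) the partial-data UCP the paper imports, and it is nowhere established in your sketch. Two smaller inaccuracies: the beams are not ``exact near $\partial\Omega$'' merely because $q=0$ there (the eikonal/transport hierarchy is solved only to finite order off the central line), and the exponent $1/12$ is claimed rather than derived; neither is fatal, but both would need care.

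For comparison, the paper avoids Runge approximation altogether. It lets $U_0$, $U_1$, $U_2$ be \emph{non-localized} GO solutions (Proposition~\ref{prop:GO existence}, Remark~\ref{rk:GO}) with full-boundary traces, the adjoint amplitude carrying the extra factor $e^{i(\tau t+x\cdot\xi)}$; the unknown Neumann data of the second linearization $W$ on $\Sigma\setminus\Sigma^\sharp$ is neutralized by multiplying $W$ by a cutoff $\chi$ equal to $1$ outside a boundary collar, so that $\chi W$ has zero Cauchy data on all of $\Sigma$ and the identity of Lemma~\ref{lemma:identity} acquires the commutator term $[\Delta,\chi]W$, supported in the collar. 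That term is estimated by the quantitative UCP of Lemma~\ref{lemma:UCP}, using the a priori bound on $\|W\|_{H^{1,1}(Q)}$ and the smallness of $\partial_\nu W$ on $\Sigma^\sharp$ (obtained, as in your first paragraph, from the finite-difference/$\delta^{1/3}$ step). The main term then yields an estimate of $\widehat{\beta\theta_h^3}(\tau,\xi)$ for all frequencies, and the theorem follows by frequency truncation, $H^{-1}$--$H^{1}$ interpolation, and optimization of $\rho,\gamma,M,h,\varepsilon$ --- which is where $\delta^{1/12}$ and the logarithmic term actually come from. So the Fourier-transform/cutoff-commutator/UCP route replaces both your pointwise beam concentration and your Runge step; to salvage your route you would have to prove the quantitative Runge lemma for $i\partial_t+\Delta+q$ with data confined to $\Sigma^\sharp$, which is the real content of the partial-data difficulty.
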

The logarithmic type stability estimate here is expected since we only take measurements on partial region of the boundary of the domain. 
 
The uniqueness result of Theorem~\ref{thm:unique} follows directly from Theorem~\ref{thm:local uniqueness} and Theorem~\ref{thm:stability} by letting $\delta\rightarrow 0$. In particular, due to Theorem~\ref{thm:local uniqueness}, the assumption of $\beta_1-\beta_2$ can be relaxed to $\mathcal{M}_{\mathcal{O}'}$.
\begin{theorem}[Global uniqueness]\label{thm:unique}
Assume $\beta_j \in C^{\infty}(Q)$ for $j=1,\,2$. Suppose that $(q,\beta_1-\beta_2)\in \mathcal{M}_{\mathcal{O}}\times\mathcal{M}_{\mathcal{O}'}$. Let $\Lambda_{q,\beta_j}:\mathcal{S}_\lambda(\Sigma)\rightarrow L^2(\Sigma^\sharp)$ be the Dirichlet-to-Neumann maps of the nonlinear Schr\"odinger equation \eqref{IBVP nonlinear} with $\beta_j$ for $j=1,\,2$. 
If $\Lambda_{q,\beta_1}(f)=\Lambda_{q,\beta_2}(f)$ for all $f\in \mathcal{S}_\lambda(\Sigma)$,
then
$$
\beta_1=\beta_2\quad \hbox{ in }Q.
$$
\end{theorem}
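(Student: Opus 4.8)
The plan is to deduce Theorem~\ref{thm:unique} directly from Theorem~\ref{thm:local uniqueness} and Theorem~\ref{thm:stability}, following the remark already sketched in the text. The key geometric observation is that under the full-data hypothesis $\Gamma = \p\Omega$, the obstruction set $\Omega_\Gamma$ defined in \eqref{DEF:Omega Gamma} is all of $\Omega$: for every $p\in\Omega$ one may pick any orthonormal pair $\omega_1,\omega_2\in\mathbb S^{n-1}$, and the lines $\gamma_{p,\omega_1}$, $\gamma_{p,\omega_2}$, $\gamma_{p,\omega_1+\omega_2}$ automatically hit $\p\Omega$ inside $\Gamma=\p\Omega$. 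Hence Theorem~\ref{thm:local uniqueness} applies at every interior point.

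First I would record that the hypothesis $\Lambda_{q,\beta_1}(f)=\Lambda_{q,\beta_2}(f)$ for all $f\in\mathcal S_\lambda(\Sigma)$ in particular holds for all $f$ with $\mathrm{supp}(f)\subset \Sigma^\sharp$, so the hypothesis of Theorem~\ref{thm:local uniqueness} is satisfied. Applying that theorem with $\Gamma=\p\Omega$ and the identification $\Omega_\Gamma=\Omega$ above yields $\beta_1(t,x)=\beta_2(t,x)$ for all $(t,x)\in(0,T)\times\Omega$, i.e.\ $\beta_1=\beta_2$ in $Q$. This already gives the conclusion; alternatively, one can invoke Theorem~\ref{thm:stability} with $\delta\to 0$: the hypothesis $(q,\beta_1-\beta_2)\in\mathcal M_{\mathcal O}\times\mathcal M_{\mathcal O'}$ combined with Theorem~\ref{thm:local uniqueness} shows $\beta_1-\beta_2$ vanishes near $\p\Omega$ (so in particular $\beta_1-\beta_2\in\mathcal M_{\mathcal O}$, legitimizing the use of the stability theorem), and then the bound $\|\beta_1-\beta_2\|_{L^2((0,T^*)\times\Omega)}\le C(\delta^{1/12}+|\log\delta|^{-\sigma})\to 0$ forces $\beta_1=\beta_2$ on $(0,T^*)\times\Omega$ for every $T^*<T$, hence on $Q$ by continuity.

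I should be careful about one bookkeeping point: Theorem~\ref{thm:stability} assumes $(q,\beta_1-\beta_2)\in\mathcal M_{\mathcal O}\times\mathcal M_{\mathcal O}$ where $\mathcal O$ is a neighborhood of $\p\Omega$, whereas Theorem~\ref{thm:unique} only assumes $\beta_1-\beta_2\in\mathcal M_{\mathcal O'}$ with $\mathcal O'$ a neighborhood of $\Gamma^c$. When $\Gamma=\p\Omega$ we have $\Gamma^c=\emptyset$, so $\mathcal M_{\mathcal O'}$ carries no vanishing constraint at all beyond the size bound $m_0$; the genuine vanishing of $\beta_1-\beta_2$ near the whole boundary is instead supplied by Theorem~\ref{thm:local uniqueness} applied to the annular neighborhood $\mathcal O$, which is the step that upgrades $\mathcal M_{\mathcal O'}$ to $\mathcal M_{\mathcal O}$.

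Since Theorem~\ref{thm:local uniqueness} already delivers the full conclusion outright in this setting, the proof is essentially immediate once the geometry $\Omega_\Gamma=\Omega$ is noted; there is no real obstacle. The only mild subtlety worth spelling out is the reconciliation of the two function-space hypotheses just described, and the observation that the size-constraint part of $\mathcal M_{\mathcal O}$ (the bound by $m_0$) plays no role in the uniqueness conclusion and may simply be taken for granted for the given pair $\beta_1,\beta_2$.
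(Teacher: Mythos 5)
There is a genuine gap, and it comes from a misreading of the setting: you assume ``the full-data hypothesis $\Gamma=\p\Omega$,'' but in this paper $\Gamma$ is a fixed \emph{open proper} subset of $\p\Omega$, and Theorem~\ref{thm:unique} is still a partial-data statement (the DN maps take values in $L^2(\Sigma^\sharp)$ with $\Sigma^\sharp=(0,T)\times\Gamma$, and the hypothesis $\beta_1-\beta_2\in\mathcal M_{\mathcal O'}$, with $\mathcal O'$ a neighborhood of $\Gamma^c=\p\Omega\setminus\Gamma\neq\emptyset$, is precisely what compensates for the inaccessible part of the boundary). Consequently your central geometric claim $\Omega_\Gamma=\Omega$ is false in general: for a proper patch $\Gamma$, points of $\Omega$ far from $\Gamma$ need not admit two orthogonal directions $\omega_1,\omega_2$ with all three lines $\gamma_{p,\omega_1},\gamma_{p,\omega_2},\gamma_{p,\omega_1+\omega_2}$ exiting through $\Gamma$, so Theorem~\ref{thm:local uniqueness} does not apply at every interior point and cannot by itself yield $\beta_1=\beta_2$ in $Q$. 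Your ``alternative'' route is also framed under the same false premise: you declare $\mathcal M_{\mathcal O'}$ vacuous (``$\Gamma^c=\emptyset$'') and attribute the vanishing of $\beta_1-\beta_2$ near the whole boundary to Theorem~\ref{thm:local uniqueness} alone, which is exactly the step that fails in the actual partial-data setting.

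The paper's proof uses both hypotheses in a complementary way: Theorem~\ref{thm:local uniqueness} gives $\beta_1=\beta_2$ on $(0,T)\times\Omega_\Gamma$, and one only needs that $\Omega_\Gamma$ contains a neighborhood of $\Gamma$ inside $\Omega$ (for points close to the open set $\Gamma$ in the convex domain, suitable nearly tangential directions make all three lines exit through $\Gamma$); combining this with the assumption $\beta_1-\beta_2=0$ on $(0,T)\times\mathcal O'$ (near $\Gamma^c$) shows $\beta_1-\beta_2$ vanishes on a full neighborhood $\mathcal O$ of $\p\Omega$, i.e.\ $\beta_1-\beta_2\in\mathcal M_{\mathcal O}$, which is what legitimizes Theorem~\ref{thm:stability}. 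Since the DN maps coincide, the stability estimate holds for every small $\delta>0$, and letting $\delta\to0$ gives $\beta_1=\beta_2$ on $(0,T^*)\times\Omega$ for every $T^*<T$, hence in $Q$. So the skeleton of your second route is right, but the key upgrade from $\mathcal M_{\mathcal O'}$ to $\mathcal M_{\mathcal O}$ must come from the combination of the local uniqueness near $\Gamma$ with the a priori vanishing near $\Gamma^c$, not from an identification $\Omega_\Gamma=\Omega$ that does not hold here.
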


\par
The nonlinear Schr\"odinger equation (NLS) in \eqref{IBVP nonlinear} can be used to model a basic second harmonic generation process in nonlinear optics. A similar NLS is the Gross-Pitaevskii (GP) equation 
\[(i \p_t +\Delta + q) u + \beta(t,x)|u|^2u=0\]
for the single-atom wave function, used in a mean-field description of Bose-Einstein condensates. See \cite{Mal} for discussions of various NLS models based on integrability and existence of stable soliton solutions, such as the nonlinear term of a saturable one, $|u|^2(1+|u|^2/u_0^2)^{-1}$ with $u_0$ a constant, or $(|u|^2-|u|^4)u$. We remark in Remark \ref{rmk:higher_nonlinearity} that our approach can be generalized to power type nonlinearity other than quadratic ones. Similar discussions can be found in \cite{LOST2022} for the GP equation. \\

Similar to those of hyperbolic equations, results related to the determination of coefficients for dynamic Schr\"odinger equations are usually classified into two categories of time-independent and time-dependent coefficients. For the linear equation, stability estimates for recovering the time-independent electric potential or the magnetic field from the knowledge of the dynamical Dirichlet-to-Neumann map were shown in 
\cite{AM, Bella, BC, BD, BKS, CS}. 
A vast literature is devoted for the inverse problems associated to the stationary Schr\"odinger equation, known under the name of Calder\'on problem, see \cite{Na,sylvester1987global} for the major results when the DN-map is measured on the whole boundary and see \cite{FKSjU09, ferreira2009limiting,Dos2009, kenig2007calderon} when measured on part of the boundary. The paper \cite{Eskin} by Eskin is known to be the first to show the unique determination of time-dependent electric and magnetic potentials of the Schr\"odinger equation from the DN-map. Stability for the inverse problem with full boundary measurement was shown in \cite{KianSoccorsi,KianTetlow, CKS}. The stable determination of time-dependent coefficients appearing in the linear Schr\"odinger equation from partial DN map is then given in \cite{Bella-Fraj2020}. The stability estimate for the problem of determining the time-dependent zeroth order coefficient in a parabolic equation from a partial parabolic Dirichlet-to-Neumann map can be found in \cite{ChoulliKian2018}.

In dealing with the inverse problems for nonlinear PDEs, the first order linearization of the DN-map was introduced in recovering the linear coefficient for the medium, and sometimes the nonlinear coefficients. See \cite{Sun2002, Isakov93, victor01, victorN, isakov1994global, SunUhlmann97} for demonstrations for certain semilinear, quasilinear elliptic equations and parabolic equations. 
Recently the higher order linearization, also called the multifold linearization, of the measurement operators (e.g., the Dirichlet-to-Neumann map or the source-to-solution map) has been applied in determining nonlinear coefficients in more general nonlinear differential equations. 
For example, based on the scheme, the nonlinear interactions of distorted plane waves were analyzed to recover the metric of a Lorentzian space-time manifold and nonlinear coefficients using the measurements of solutions to nonlinear hyperbolic equations \cite{KLU2018, LUW2018, UW2020}. In contrast the underlying problems for linear hyperbolic equations are still open, see also \cite{CLOP,LUW2018} and the references therein.  
The method is also applied to study elliptic equations with power-type nonlinearities, including stationary nonlinear Schr\"odinger equations and magnetic Schr\"odinger equations, see \cite{KU201909, KU201905, LL2019, LaiL2020, LaiOhm21, LLLS201905, LLLS201903,  Lin202004}. A demonstration of the method can be found in \cite{AZ2020, AZ2018} on nonlinear Maxwell's equations, in \cite{LUY2020, LaiUhlmannZhou} on nonlinear kinetic equations, and in \cite{LLPT2020_1} on semilinear wave equations.   In \cite{LaiZhou2020}, we solved an inverse problem for the magnetic Schr\"odinger equation with nonlinearity in both magnetic and electric potentials using partial DN-map and its nonlocal fractional diffusion version \cite{LZ2023}. For the nonlinear dynamic Schr\"odinger equation considered in this paper, unique determination of time-dependent linear and nonlinear potentials from the knowledge of a source-to-solution map was discussed in \cite{LOST2022}. \\

The paper is organized as follows. In Section \ref{sec:wellposed}, we establish the well-posedness of the direct problem, the initial boundary value problem for our nonlinear time-dependent Schr\"odinger equation in a bounded domain for well chosen boundary conditions. Then we prove the local uniqueness result Theorem \ref{thm:local uniqueness} in Section \ref{sec:local} by constructing the geometrical optics (GO) solutions for the linear Sch\"odinger equation that concentrate near straight lines intersecting at a point. The higher order (multifold) linearization step is conducted via finite difference expansions in this section to derive the needed integral identity. Then we prove the stability estimate Theorem \ref{thm:stability} in Section \ref{sec:stability} where we implement a more standard type of linear GO solutions and adopt the unique continuation argument to control the boundary term due to the inaccessibility by the partial data measurement. Finally, we present the short proof of Theorem \ref{thm:unique} for a global uniqueness result by combining assumptions in the previous two theorems.

\section*{Acknowledgements}
 R.-Y. Lai is partially supported by the National Science Foundation through grant DMS-2006731.

\section{Well-posedness of the Dirichlet problem}\label{sec:wellposed}

\subsection{Notations}
Let $r$ and $s$ be two non-negative real numbers, $m$ be a non-negative integer and let $X$ be one of $\Omega$, $\p\Omega$ and $\Gamma$. We introduce the following Hilbert spaces:
\begin{itemize}
\item the space $L^2(0,T;H^s(X))$ that consists of all measurable functions $f:[0,T]\rightarrow H^s(X)$ with norm
$$
   \|f\|_{L^2(0,T;H^s(X))}:=\LC\int^T_0 \|f(t,\cdot)\|_{H^s(X)}^2\,dt \RC^{1/2}<\infty;
$$
\item the Sobolev space 
$$
H^{m}(0,T;L
^2(X)):=\{f:\,\p_t^\alpha f\in L^2(0,T;L^2(X))\quad \hbox{for }\alpha=0,1,\ldots, m\};
$$
and the interpolation  
$$
H^{r}(0,T;L^2(X))=[H^m(0,T; L^2(X)), L^2(0,T; L^2(X))]_\theta,\quad (1-\theta)m=r.
$$
\end{itemize}
We also define the Hilbert space 
$$
H^{r,s}((0,T)\times X):=H^r(0,T;L^2(X))\cap L^2(0,T;H^s(X)),
$$
whose norm is given by
$$
    \|f\|_{H^{r,s}((0,T)\times X)}:=\LC\int^T_0\|f(t,\cdot)\|^2_{H^s(X)}dt + \|f\|^2_{H^r(0,T;L^2(X))}\RC^{1/2}.
$$
For more details on these definitions, we refer to Chapter 1 and Chapter 4 in \cite{Lion}.
In particular, for integer $m\geq 1$, we define 
$$
    \mathcal{H}^{m}_0(Q):=\{f\in H^{m}(Q):\, \p_t^\alpha f|_{t=0}=0,\quad \alpha=0,\cdots, m-1\}.
$$

For $\lambda>0$ we define the subset $\mathcal{S}_\lambda(\Sigma)$ of $H^{2\kappa+\frac32,2\kappa+\frac32}(\Sigma)$ by
\begin{equation}\label{eqn:S_eps}\begin{split}
\mathcal{S}_\lambda(\Sigma):=\Big\{f\in H^{2\kappa+\frac32,2\kappa+\frac32}(\Sigma) :~ \partial_t^mf(0,\cdot)=0 \textrm{ on }\partial\Omega &\textrm{ for }\hbox{integers } m<2\kappa+{3\over 2},\\
&\textrm{ and }\quad \|f\|_{H^{2\kappa+\frac32,2\kappa+\frac32}(\Sigma)}\leq \lambda \Big\}.
\end{split}
\end{equation}

\subsection{Well-posedness}\label{sec:wellposedness}
We first show unique existence of the solution to the linear equation and, based on this, we apply the contraction mapping principle to deduce the well-posedness for the nonlinear equation.
\begin{proposition}(Well-posedness for the linear equations)\label{Prop:linear forward}
Let $2\kappa>{n+1\over 2}$ be an integer. Suppose $q\in C^\infty(Q)$.  
For any $f\in H^{2\kappa+\frac32,2\kappa+\frac32}(\Sigma)$ {satisfying $\partial_t^mf(0,\cdot)=0$ for $m<2\kappa+{3\over 2}$}, there exists a unique solution $u_{f}\in H^{2\kappa}(Q)$ to the linear system:
\begin{align}\label{IBVP}
	\left\{\begin{array}{rcll}
		\LC i \p_t +\Delta + q \RC u_{f}&=& 0 &\quad \hbox{in }Q ,\\
		u_{f} &=&f &\quad \hbox{on }\Sigma ,\\
		u_{f} &=&0 &\quad \hbox{on }\{0\}\times \Omega,\\
		\end{array} \right. 
\end{align} 
and $u_{f}$ satisfies the estimate  
\begin{align}\label{EST:u f0}
\|u_{f}\|_{H^{2\kappa}(Q)}\leq C\|f\|_{H^{2\kappa+\frac32,2\kappa+\frac32}(\Sigma)}.
\end{align}
\end{proposition}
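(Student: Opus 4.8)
The plan is to prove uniqueness by an energy estimate and existence together with \eqref{EST:u f0} by lifting the boundary data to reduce to a homogeneous-Dirichlet problem with a forcing term, solving that via the Schr\"odinger evolution family, and bootstrapping the regularity with the help of the compatibility conditions on $f$. For uniqueness, suppose $u\in H^{2\kappa}(Q)$ solves \eqref{IBVP} with $f\equiv 0$. Pairing the equation with $\overline{u(t,\cdot)}$ in $L^2(\Omega)$ and taking imaginary parts, the Dirichlet condition annihilates the boundary contribution from $\Delta$ and $\int_\Omega|\nabla u|^2\,dx$ is real, so $\frac{d}{dt}\|u(t,\cdot)\|_{L^2(\Omega)}^2=-2\int_\Omega(\mathrm{Im}\,q)\,|u|^2\,dx\le 2\|q\|_{L^\infty(Q)}\|u(t,\cdot)\|_{L^2(\Omega)}^2$. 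Since $u(0,\cdot)=0$, Gr\"onwall's inequality forces $u\equiv 0$; hence a solution, once it exists, is unique.

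For existence I would first reduce to homogeneous boundary data. Using the trace/extension theory for the anisotropic spaces $H^{r,s}$ (see \cite{Lion}) together with the hypotheses $\p_t^m f(0,\cdot)=0$ for $m<2\kappa+\tfrac32$, pick a bounded lifting $F=Ef$ with $F|_\Sigma=f$, $\p_t^m F(0,\cdot)=0$ for the relevant $m$, and $\|F\|_{H^{2\kappa+2}(Q)}\le C\|f\|_{H^{2\kappa+\frac32,2\kappa+\frac32}(\Sigma)}$. Writing $u_f=F+v$, the remainder $v$ must solve
\begin{equation*}
(i\p_t+\Delta+q)v=h\quad\text{in }Q,\qquad v|_\Sigma=0,\qquad v|_{t=0}=0,
\end{equation*}
with $h:=-(i\p_t+\Delta+q)F$. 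Since $q\in C^\infty(Q)$, we get $h\in H^{2\kappa}(Q)$ with $\|h\|_{H^{2\kappa}(Q)}\le C\|f\|_{H^{2\kappa+\frac32,2\kappa+\frac32}(\Sigma)}$, and by the choice of $F$ the source satisfies $\p_t^m h(0,\cdot)=0$ for $m\le\kappa-1$.

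Next I would solve the forced problem and upgrade regularity. The operators $A(t):=-i(\Delta+q(t,\cdot))$ with the $t$-independent domain $H^2(\Omega)\cap H^1_0(\Omega)$ generate a strongly continuous evolution family $U(t,s)$ on $L^2(\Omega)$ with $\|U(t,s)\|_{\mathcal L(L^2(\Omega))}\le e^{C|t-s|}$ (Stone's theorem plus a smooth bounded perturbation, or a Galerkin approximation in the Dirichlet eigenbasis); Duhamel's formula $v(t)=-i\int_0^t U(t,s)h(s,\cdot)\,ds$ gives $v\in C([0,T];L^2(\Omega))$ with norm bounded by $\|h\|_{L^1(0,T;L^2(\Omega))}$. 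Differentiating the equation in time, $v_j:=\p_t^j v$ solves $(i\p_t+\Delta+q)v_j=h_j$ with $v_j|_\Sigma=0$ and, because the compatibility of $h$ forces $\p_t^\ell v(0,\cdot)=0$ for $\ell\le\kappa$, also $v_j|_{t=0}=0$; here $h_j=\p_t^j h-\sum_{\ell<j}\binom{j}{\ell}(\p_t^{\,j-\ell}q)\,v_\ell$ is, once $v_0,\dots,v_{j-1}$ are controlled, bounded in $L^1(0,T;L^2(\Omega))$ in terms of $\|h\|_{H^{2\kappa}(Q)}$ and $\|q\|_{C^\infty(Q)}$. Iterating the $L^2$-estimate for $j=1,\dots,\kappa$ and then applying elliptic regularity for $-\Delta$ on $\Omega$ to $\Delta v_j=h_j-i\p_t v_j-qv_j$ (descending in $j$) produces bounds on all the space--time derivatives entering $\|v\|_{H^{2\kappa}(Q)}$; adding back $F$ yields $u_f=F+v\in H^{2\kappa}(Q)$ with \eqref{EST:u f0}.

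The main obstacle is this last step: one must organize the bootstrap so that every time-differentiated equation carries vanishing initial data and no boundary-in-time term --- which is exactly the role of the vanishing traces $\p_t^m f(0,\cdot)=0$, propagated first to $F$ and then to $h$ --- while keeping the commutator terms generated by differentiating $q(t,\cdot)v$ under control, the latter being routine thanks to the smoothness of $q$. The anisotropic trace/extension in the reduction step is classical but must be set up with the correct vanishing Cauchy data at $t=0$.
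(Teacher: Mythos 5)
Your overall route differs from the paper's only in the second half: the reduction to homogeneous Dirichlet data by an anisotropic lifting (Lions--Magenes) is exactly the paper's first step, but where the paper then simply invokes Lemma~4 of \cite{LOST2022} (boundedness of the solution operator $\mathcal L^{-1}:\mathcal H^{2\kappa}_0(Q)\to\mathcal H^{2\kappa}_0(Q)$ for the forced problem with zero boundary and initial data), you re-derive that black box from scratch via an evolution family, Duhamel, time-differentiation of the equation, and elliptic regularity; your Gr\"onwall uniqueness argument is also fine. The problem is the bookkeeping in that bootstrap. You only propagate time derivatives up to order $j=\kappa$ (using compatibility $\partial_t^m h(0,\cdot)=0$ for $m\le\kappa-1$) and then descend with elliptic regularity. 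That yields the anisotropic regularity $\partial_t^j v\in L^2(0,T;H^{2(\kappa-j)}(\Omega))$ for $j\le\kappa$, i.e.\ control of $\partial_t^j\partial_x^\alpha v$ only when $|\alpha|\le 2\kappa-2j$. This does \emph{not} bound the isotropic norm $\|v\|_{H^{2\kappa}(Q)}$ appearing in \eqref{EST:u f0}: already for $\kappa=1$ your scheme gives $v,\partial_t v\in C([0,T];L^2)$ and $v\in L^2(0,T;H^2)$, but says nothing about $\partial_t^2 v$ or $\partial_t\nabla v$, both of which belong to the $H^2(Q)$ norm. The isotropic space is what the paper needs later (it is used as a Banach algebra for $2\kappa>\tfrac{n+1}{2}$), so this is a genuine gap, not a cosmetic one.

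The fix is within reach of your own construction, which is why I would call this a repairable gap rather than a wrong approach: the lifting $F$ can be (and in the paper is) chosen with $\partial_t^\alpha F(0,\cdot)=0$ for all $\alpha<2\kappa+\tfrac32$, so the source $h=-(i\partial_t+\Delta+q)F$ has vanishing time traces at $t=0$ up to order $2\kappa$, not just $\kappa-1$. You should therefore differentiate the equation in time up to order $j=2\kappa$, obtaining $\partial_t^j v\in C([0,T];L^2(\Omega))$ for all $j\le 2\kappa$ with zero initial data at every stage, and only then run the descending elliptic-regularity argument $\Delta v_j=h_j-i v_{j+1}-q v_j$; an easy induction then gives $\partial_t^j v\in L^2(0,T;H^{m_j}(\Omega))$ with $m_j\ge 2\kappa-j$, which is exactly what the isotropic $H^{2\kappa}(Q)$ norm requires, and the estimate \eqref{EST:u f0} follows by tracking constants (each $h_j$ is controlled by $\|h\|_{H^{2\kappa}(Q)}\le C\|f\|_{H^{2\kappa+\frac32,2\kappa+\frac32}(\Sigma)}$ and the already-estimated $v_\ell$, $\ell<j$). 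With that correction your argument is a legitimate, self-contained alternative to citing the external lemma.
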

\begin{proof}
In light of [\cite{Lion}, Chapter 4, Theorem 2.3], there exists a function $\tilde{u}\in H^{2\kappa+2,2\kappa+2}(Q)$ such that for $0\leq \alpha< 2\kappa+ {3\over 2}$,
\begin{align}\label{tilde u data}
    \p_t^\alpha\tilde{u}(0,\cdot)=0\quad \hbox{ in }\Omega,\quad \quad \tilde{u}|_{\Sigma}=f,
\end{align}
and 
$$
    \|\tilde{u}\|_{H^{2\kappa+2}(Q)}\leq C\|\tilde{u}\|_{H^{2\kappa+2, 2\kappa+2}(Q)}\leq C\|f\|_{{H}^{2\kappa+\frac32,2\kappa+\frac32}(\Sigma)}
$$
for some positive constant $C$, depending only on $\Omega$ and $T$, where the first inequality holds by noticing Proposition~2.3 in Chapter 4 in \cite{Lion}.
Let 
$$
    F:=-(i\p_t+\Delta+q)\tilde{u}.
$$
Since $\tilde u\in H^{2\kappa+2}(Q)$, we get $F\in H^{2\kappa+1,2\kappa}(Q)\subset H^{2\kappa,2\kappa}(Q)$ implying $F\in H^{2\kappa}(Q)$ by using Proposition~2.3 in Chapter 4 in \cite{Lion} again. In addition, due to \eqref{tilde u data}, $F$  
has zero initial condition up to $2\kappa$ derivative w.r.t. $t$, which makes $F\in \mathcal H^{2\kappa}_0(Q)$.
From Lemma 4 of \cite{LOST2022}, there exists a unique solution $u_*$ to the Schr\"odinger equation $(i\p_t+\Delta +q)u_*=F$ with $F|_{t=0}=0$ and $u_*|_{t=0}=u_*|_\Sigma=0$. {We denote by $\mathcal L^{-1}$ the solution operator of this inhomogeneous Dirichlet problem for the linear Schr\"odinger equation, that is, $\mathcal{L}^{-1}(F)=u_*$. In particular, we have that $\mathcal L^{-1}:~\mathcal{H}_0^{2\kappa}(Q)~\rightarrow~\mathcal{H}_0^{2\kappa}(Q)$ is a bounded linear operator.} Therefore, we obtain
$$
\|u_*\|_{H^{2\kappa}(Q)}\leq C\|F\|_{\mathcal{H}^{2\kappa}_0(Q)}\leq C\|f\|_{{H}^{2\kappa+\frac32,2\kappa+\frac32}(\Sigma)},
$$
and $u_{f} = \tilde{u}+u_*\in H^{2\kappa}(Q)$ satisfies
\[
\|u_{f}\|_{H^{2\kappa}(Q)}\leq \|\tilde u\|_{H^{2\kappa}(Q)}+\|u_*\|_{H^{2\kappa}(Q)}
\leq C\|f\|_{{H}^{2\kappa+\frac32,2\kappa+\frac32}(\Sigma)}.
\]
\end{proof}

\begin{proposition}\label{prop:forward nonlinear}(Well-posedness for the nonlinear equation)
Let $2\kappa>{n+1\over 2}$ be an integer. Suppose $q$ and $\beta$ are in $C^\infty(Q)$. For any $f\in\mathcal S_{\lambda}(\Sigma)$ (defined in \eqref{eqn:S_eps}) with $\lambda>0$ sufficiently small, there exists a unique solution $u\in H^{2\kappa}(Q)$ to the problem \eqref{IBVP nonlinear}
and it satisfies the estimate   
\begin{equation}\label{eqn:u_dep_f}
\|u\|_{H^{2\kappa}(Q)}\leq C\|f\|_{H^{2\kappa+\frac32,2\kappa+\frac32}(\Sigma)},
\end{equation}
where the constant $C>0$ is independent of $f$.
\end{proposition}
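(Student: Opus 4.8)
The plan is to obtain the nonlinear solution as a fixed point of the map $T(v):=\mathcal L^{-1}(-\beta v^2)+u_f^{\text{lin}}$, where $u_f^{\text{lin}}\in H^{2\kappa}(Q)$ is the solution of the linear problem \eqref{IBVP} furnished by Proposition~\ref{Prop:linear forward}, and $\mathcal L^{-1}$ is the bounded solution operator on $\mathcal H^{2\kappa}_0(Q)$ introduced in that proof. Note $u-u_f^{\text{lin}}$ should solve the inhomogeneous linear equation with right-hand side $-\beta u^2$ and zero boundary and initial data, so a fixed point $u=T(u)$ is precisely the desired solution of \eqref{IBVP nonlinear}. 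The key point is that $2\kappa>\frac{n+1}{2}$ makes $H^{2\kappa}(Q)$, with $Q\subset\mathbb R^{n+1}$, a Banach algebra (Sobolev embedding $H^{2\kappa}(Q)\hookrightarrow C(\overline Q)$ together with the algebra property $\|gh\|_{H^{2\kappa}}\le C_{alg}\|g\|_{H^{2\kappa}}\|h\|_{H^{2\kappa}}$); hence $v\mapsto \beta v^2$ maps $H^{2\kappa}(Q)$ into itself, with $\|\beta v^2\|_{H^{2\kappa}(Q)}\le C_{alg}^2\|\beta\|_{H^{2\kappa}(Q)}\|v\|_{H^{2\kappa}(Q)}^2$.

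First I would fix $R:=2C_0\|f\|_{H^{2\kappa+\frac32,2\kappa+\frac32}(\Sigma)}$, where $C_0$ is the constant in \eqref{EST:u f0}, and work in the closed ball $B_R:=\{v\in\mathcal H^{2\kappa}_0(Q):\|v\|_{H^{2\kappa}(Q)}\le R\}$ — here I should first check that $u_f^{\text{lin}}$ may be taken in $\mathcal H^{2\kappa}_0(Q)$ given the vanishing conditions $\partial_t^m f(0,\cdot)=0$ imposed in the definition \eqref{eqn:S_eps} of $\mathcal S_\lambda(\Sigma)$. Then for $v\in B_R$,
\begin{align*}
\|T(v)\|_{H^{2\kappa}(Q)}\le \|u_f^{\text{lin}}\|_{H^{2\kappa}(Q)}+\|\mathcal L^{-1}\|\,C_{alg}^2\,m_0'\,R^2\le \tfrac{R}{2}+C_1 R^2,
\end{align*}
where $m_0'$ bounds $\|\beta\|_{H^{2\kappa}(Q)}$, so $T(B_R)\subset B_R$ as soon as $C_1 R\le \tfrac12$, i.e.\ once $\lambda$ (hence $R$) is small enough. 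For the contraction estimate, I would use $v^2-w^2=(v+w)(v-w)$ to get, for $v,w\in B_R$,
\begin{align*}
\|T(v)-T(w)\|_{H^{2\kappa}(Q)}\le \|\mathcal L^{-1}\|\,C_{alg}^2\,m_0'\,\|v+w\|_{H^{2\kappa}(Q)}\|v-w\|_{H^{2\kappa}(Q)}\le 2C_1 R\,\|v-w\|_{H^{2\kappa}(Q)},
\end{align*}
which is a strict contraction when $2C_1R<1$, again guaranteed by shrinking $\lambda$. The Banach fixed point theorem then yields a unique $u\in B_R$ with $u=T(u)$, and the bound $\|u\|_{H^{2\kappa}(Q)}\le R\le C\|f\|_{H^{2\kappa+\frac32,2\kappa+\frac32}(\Sigma)}$ gives \eqref{eqn:u_dep_f}.

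For uniqueness in all of $H^{2\kappa}(Q)$ (not merely in $B_R$): if $u_1,u_2$ both solve \eqref{IBVP nonlinear} with the same $f$, then each satisfies the a priori bound \eqref{eqn:u_dep_f} (this is where I would need to argue, e.g.\ by an energy/continuation argument or by noting the a priori estimate is built into the construction, that any solution lies in a small ball), so both lie in $B_R$ and the contraction property forces $u_1=u_2$. The main obstacle I anticipate is the bookkeeping around the initial-time compatibility conditions: ensuring that $u_f^{\text{lin}}$, the source $-\beta u^2$, and the image $\mathcal L^{-1}(-\beta u^2)$ all genuinely sit in $\mathcal H^{2\kappa}_0(Q)$ so that $\mathcal L^{-1}$ applies — this uses that products of functions vanishing to sufficiently high order at $t=0$ still vanish to that order, and that the vanishing moments in \eqref{eqn:S_eps} were chosen precisely to make $u_f^{\text{lin}}\in\mathcal H^{2\kappa}_0(Q)$. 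The analytic heart, the Banach-algebra estimate on $\beta v^2$, is routine given $2\kappa>\frac{n+1}{2}=\frac{\dim Q}{2}$.
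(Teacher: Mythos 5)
Your proposal is correct and follows essentially the same route as the paper: a Banach fixed-point argument on a small ball in $\mathcal H^{2\kappa}_0(Q)$, using the bounded linear solution operator $\mathcal L^{-1}$ from Proposition~\ref{Prop:linear forward} and the Banach-algebra property of $H^{2\kappa}(Q)$ for $2\kappa>\frac{n+1}{2}$, with smallness supplied by $\lambda$. The only cosmetic difference is that the paper iterates on $w=u-u_f$ via $w\mapsto \mathcal L^{-1}(-\beta(w+u_f)^2)$ rather than on $u$ itself, and, like your write-up, its contraction argument yields uniqueness only among small solutions (the "small unique solution" used later), so your flagged caveat matches the paper's own scope.
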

\begin{proof}
If $u$ is a solution to \eqref{IBVP nonlinear}, we set $w:=u-u_{f}$ which will solve 
\begin{equation}\label{eqn:IBVP_w}
\left\{\begin{array}{rcll}
    (i\partial_t+\Delta+q)w &=&-\beta(t,x)(w+u_{f})^2 &\quad \textrm{in }Q,\\
    w &=&0 &\quad \hbox{on }\Sigma ,\\
	w &=&0 &\quad \hbox{on }\{0\}\times \Omega,\\
\end{array}
\right.
\end{equation}
where $u_f$ is the solution to \eqref{IBVP}. Or equivalently, $w$ is the solution to
\[w-\mathcal L^{-1}\circ\mathcal Kw=0,\]
where $\mathcal Kw:=-\beta(t,x)(w+u_{f})^2$. 
For $2\kappa>\frac{n+1}2$, using the facts that $H^{2\kappa}(Q)$ is a Banach algebra and that $u_{f}\in \mathcal{H}^{2\kappa}_0(Q)$, we have that $\mathcal K:~\mathcal{H}_0^{2\kappa}~\rightarrow~\mathcal{H}_0^{2\kappa}$ is bounded.

We define for $a>0$ the subset
\[X_a(Q):=\{u\in \mathcal H^{2\kappa}_0(Q);~\|u\|_{H^{2\kappa}(Q)}\leq a\}.\]
From \eqref{EST:u f0}, we deduce
\[
 \|(\mathcal{L}^{-1}\circ \mathcal{K})w\|_{H^{2\kappa}(Q)}\leq C \|\mathcal{K}w\|_{H^{2\kappa}(Q)}\leq C\left(\|w\|^2_{H^{2\kappa}(Q)}+\|u_{f}\|_{H^{2\kappa}(Q)}^2\right)\leq C(a^2+\lambda^2)\leq a  
\]
for $w \in X_a(Q)$ and
\[\begin{split}
& \|(\mathcal{L}^{-1}\circ \mathcal{K})w_1-(\mathcal{L}^{-1}\circ \mathcal{K})w_2\|_{H^{2\kappa}(Q)}
 \leq C \|\mathcal Kw_1-\mathcal Kw_2\|_{H^{2\kappa}(Q)}\\
&\leq C\left(\|w_1\|_{H^{2\kappa}(Q)}+\|w_2\|_{H^{2\kappa}(Q)}+\|u_{f}\|_{H^{2\kappa}(Q)}\right)\|w_1-w_2\|_{H^{2\kappa}(Q)}\\
&\leq C(a+\lambda)\|w_1-w_2\|_{H^{2\kappa}(Q)}\\
&\leq K\|w_1-w_2\|_{H^{2\kappa}(Q)},\quad \hbox{ for }w_1,\,w_2\in X_a(Q)
\end{split}\]
with $K\in(0,1)$ provided that we choose $0<\lambda<a<1$ and $a$ small enough. This proves that $\mathcal L^{-1}\circ\mathcal K$ is a contraction map on $X_a(Q)$, hence there exists a fixed point $w\in X_a(Q)$ as the solution to \eqref{eqn:IBVP_w}. 
Moreover,
\[\begin{split}\|w\|_{H^{2\kappa}(Q)}=\|(\mathcal L^{-1}\circ\mathcal K) w\|_{H^{2\kappa}(Q)}&\leq C\|\mathcal Kw\|_{H^{2\kappa}(Q)}\\
&\leq C(\|w\|_{H^{2\kappa}(Q)}^2+\|u_{f}\|_{H^{2\kappa}(Q)}^2)\\
&\leq Ca\|w\|_{H^{2\kappa}(Q)}+C\lambda\|u_{f}\|_{H^{2\kappa}(Q)},
\end{split}\]
which further implies 
\[\|w\|_{H^{2\kappa}(Q)}\leq C\lambda\|u_{f}\|_{H^{2\kappa}(Q)}\]
by choosing $a$ sufficiently small.
Combined with \eqref{EST:u f0}, we eventually obtain \eqref{eqn:u_dep_f}.
\end{proof}

\section{Proof of Theorem \ref{thm:local uniqueness}}\label{sec:local}
\subsection{Geometrical optics solutions based on gaussian beam quasimodes}\label{subsec:GO_gaussian}
In this section we construct the geometrical optics solutions to the linear Schr\"odinger equation 
$$
(i\p_t + \Delta +q)u=0,
$$ in $Q$, 
having the form
\[
u(t,x) = e^{ i\rho(\Theta(x)-|\omega|^2\rho t)} a(t,x) + r(t,x)
\]
and vanishing on part of the boundary, where the leading part 
$e^{ i\rho(\Theta(x)-|\omega|^2\rho t)} a(t,x)$
follows the construction of gaussian beam approximate solutions concentrated near a straight line in direction $\omega$ as $\rho\rightarrow\infty$. For completeness, we present a detailed adaptation, to our equation, of the construction in \cite{DKLS}, which was for the operator $-\Delta_g-s^2$ on its transversal manifold $(M,g)$ and for large complex frequency $s$. The analogous construction for the wave equation can be found in \cite{KKLbook}. For other similar WKB type constructions, we refer the readers to \cite{DKLS,KianSoccorsi, LOST2022}.

Let $p$ be a point in $\Omega$ and $\omega\in\R^n$ be a nonzero direction. Denote by $\gamma_{p,\omega}$ the straight line through $p$ in direction $\omega$, parametrized by $\gamma_{p,\omega}(s)=p+s \hat\omega$ for $s\in\R$, where $\hat{\omega}:= \omega/|\omega|$. We can choose $\omega_2, \ldots, \omega_{n}\in\R^n$ such that $\mathcal{A}=\{\hat\omega,\omega_2,\ldots,\omega_{n}\}$ forms an orthonormal basis of $\R^n$. Under this basis, we identify $x\in\R^n$ by the new coordinate $z=(s,z')$ where $z':=(z_2,\ldots,z_n)$, that is, 
$$ 
    x=p+ s\hat{\omega} + z_2\omega_2+\ldots+z_{n}\omega_{n}.
$$
In particular, $\gamma_{p,\omega}(s)=(s,0,\ldots,0)$.\\

We consider the gaussian beam approximate solutions $v$ with ansatz
\begin{align}\label{AppsD}
	v(t,z)=e^{i\rho( \varphi(z)-|\omega|^2\rho t)} a(t,z;\rho),\quad \rho>0,
\end{align}
in the coordinate $(t,z)\in \R^{n+1}$. The aim is to find smooth complex functions $\varphi$ and $a$. Let the Schr\"odinger operator act on $v$ and get
\begin{align}\label{EQN:WKB}
	e^{-i\rho(\varphi(z)-|\omega|^2\rho t)}  (i\p_t+\Delta+q)v (t,z)= \rho^2 (|\omega|^2-|\nabla \varphi|^2)a +
	i \rho (2 \nabla \varphi\cdot\nabla a + a\Delta \varphi )+(i\p_t+\Delta+q) a.
\end{align}

We {\it first choose the phase function $\varphi(z)$}. 
The equation \eqref{EQN:WKB} suggests that we will choose the complex phase function $\varphi$ satisfying the eikonal equation
\begin{align*} 
	\mathcal{E}(\varphi):=|\nabla \varphi|^2 -|\omega|^2 =0 \quad \text{up to $N$-th order of $z'$ on } \gamma_{p,\omega}, 
\end{align*}
that is, 
$
    \mathcal{E}(\varphi) = O(|z'|^{N+1}).  
$
We substitute $\varphi$ of the form
\begin{align*} 
\varphi(s, z' )= \sum_{k=0}^{N} \varphi_k (s, z' ), \quad \hbox{ where }
\varphi_k (s, z' )=\sum_{|\alpha'|=k} { \varphi_{k,\alpha'}(s)\over \alpha' !}(z')^{\alpha'}.
\end{align*}
Here $\alpha$ is an $n$-dim multi-index $\alpha=(\alpha_1,\alpha')\in\mathbb Z_+^n$ with $\alpha'=(\alpha_2,\ldots,\alpha_n)$, and
 $$\varphi_0(z)=|\omega| s, \quad \varphi_1(z)=0.$$ 
We obtain
	\begin{align*} 
		|\nabla \varphi|^2 -|\omega|^2&= \underbrace{\LC2|\omega| \p_s\varphi_2 + \nabla_{z'}\varphi_2\cdot\nabla_{z'} \varphi_2\RC}_{O(|z'|^2)} + \underbrace{\LC2|\omega| \p_s\varphi_3 + 2\nabla_{z'}\varphi_2\cdot\nabla_{z'} \varphi_3\RC}_{O(|z'|^3)} \\
		&\quad +   \underbrace{\LC2|\omega| \p_s\varphi_4 + 2\nabla_{z'}\varphi_2\cdot\nabla_{z'} \varphi_4 + F_4(s,z')\RC}_{O(|z'|^4)} +\cdots +O(|z'|^{N+1}),
	\end{align*} 
where $F_j(s,z')$ is a $j^{th}$ order homogeneous polynomial in $z'$ depending only on $\varphi_2,\ldots,\varphi_{j-1}$. 
Next we look for $\varphi_2$ such that the first $O(|z'|^2)$ term vanish. Writing 
$$
    \varphi_2(s,z')={1\over 2}H(s)z'\cdot z', 
$$
where $H(s)=(H_{ij}(s))_{2\leq i,j\leq n}$ is a smooth complex symmetric matrix.
Then $H$ satisfy the matrix Riccati equation
\begin{align}\label{EQN:Riccati}
    |\omega|{d\over ds} H(s)+H^2(s)=0.
\end{align}
Imposing an initial condition $H(0)=H_0$, where $H_0$ is a complex symmetric matrix with positive definite imaginary part $\textrm{Im}H_0$, by [\cite{KKLbook} Lemma~2.56], there exists a unique smooth complex symmetric solution $H(s)$ to \eqref{EQN:Riccati} with positive definite $\textrm{Im}H(s)$ for all $s\in\R$.

For $|\alpha|\geq 3$, in order to make the $O(|z'|^3), \ldots, O(|z'|^N)$ terms vanish, 
one derives first order ODE's for the Taylor coefficients $\varphi_{k,\alpha'}$. By imposing well-chosen initial conditions at $s=0$, we may find all the $\varphi_j$, $j=3, \ldots, N$. \\

{\it Next we construct the amplitude function $a(t,z; \rho)$.}
Let $\chi_{\eta}\in C^\infty_c(\R^{n-1})$ be a smooth function with $\chi_\eta=1$ for $|z'|\leq {\eta\over 2}$ and $\chi_\eta=0$ for $|z'|\geq \eta$.  
Let $\iota \in C_0^{\infty}(0,T)$ be a smooth cut-off function of the time variable.  
We make the ansatz for the amplitude as
$$  
    a(t,s,z';\rho)=\sum_{j=0}^{N}\rho^{-j}a_j(t,s,z') \chi_{\eta}(z')=(a_0 + \rho^{-1}a_1  +\cdots + \rho^{-N}a_{N}  )\chi_{\eta}(z').
$$
From \eqref{EQN:WKB}, we should determine $a_j$ from
\begin{align}\label{TREs}
\begin{array}{ll}
2  \nabla \varphi \cdot \nabla a_0 + a_0\Delta\varphi =0   &~~\text{up to $N$-th order of $z'$ on } \gamma_{p,\omega},\\
2  \nabla \varphi \cdot \nabla  a_1 + a_1\Delta \varphi =i (i\p_t+\Delta+q)a_0   &~\text{up to $N$-th order of $z'$ on } \gamma_{p,\omega},  \\
  \hskip3cm\vdots&\\
 2  \nabla \varphi \cdot \nabla a_{N} + a_{N} \Delta\varphi=i (i\p_t+\Delta+q)a_{N-1}  &~\text{up to $N$-th order of $z'$ on } \gamma_{p,\omega}.
\end{array} 
\end{align}
so that the terms of $O(\rho^{-k})$ ($k=0,\ldots, N$) vanish up to $N$-th order of $z'$ on $\gamma_{p,\omega}$.
Therefore, we write $a_0$ to have the form
$$
    a_0(t,s,z')=\sum_{k=0}^{N}a^{ k}_0 (s,z') \iota(t),\quad \hbox{where}\quad a^{ k}_0 ( s,z') =\sum_{|\alpha'|=k} {a_0^{k,\alpha'}(s) \over \alpha' !}(z')^{\alpha'} .
$$
Here $a^k_0$ is a $k^{th}$ order homogeneous polynomial in $z'$. 
The first equation in \eqref{TREs} becomes
\begin{align}\label{EQN:a0}
 2 \nabla \varphi \cdot \nabla a_0  + a_0\Delta\varphi 
&= \iota(t) \LC 2 |\omega| \p_s a_0^0 + a_0^0 \Delta_{z'} \varphi_2\RC \notag\\
&\quad+ \iota(t)\LC 2 |\omega| \p_sa^1_0 + 2  \nabla_{z'}\varphi_2\cdot \nabla_{z'}a^1_0 + a^1_0\Delta_{z'}\varphi_2 +a^0_0\Delta_{z'}\varphi_3\RC + \cdots+O(|z'|^{N+1}).
\end{align}
 Note that $\Delta_{z'} \varphi_2=tr(H(s))$. 
In order to let the first bracket vanish, we solve $2 |\omega| \p_s a_0^0(s) + tr(H(s))a_0^0 (s)=0$ with a given initial condition $a_0^0(0)=c_0$ for some constant $c_0$. For later purpose, we choose $c_0=1$ to get
$$
a_0^0(s)= e^{-\frac{1}{2|\omega|} \int_{0}^s tr(H(t))dt}. 
$$
 Similarly, the coefficients of $a^1_0, \ldots, a^N_0$ can be determined for the other brackets in \eqref{EQN:a0} to vanish.  
Lastly, we can also construct $a_2 ,\ldots, a_{N}$, which have similar forms as $a_0$, in a similar way.
Here we note that $a_0^{k,\alpha'}$ is smooth which further implies that $a(t,z;\rho)$ is smooth.

So far we have constructed 
a gaussian beam $v(t,z)$ localized near $\{(z_1,0,\ldots,0), z_1\in\R\}$ of the form \eqref{AppsD}
with
$$ \varphi(s,z')=|\omega|s+\frac12H(s)z'\cdot z' + O(|z'|^3),\quad 
a(t,s,z')=\chi_{\eta}(z')(a_0+\rho^{-1}a_1 +
\cdots+\rho^{-N}a_N) 
$$
with positive definite $\mathrm{Im}H (s)$.

{It is easy to verify that by translation and rotation $\Psi(x)=z$, the function defined by $v(t,\Psi(x))$ with $a(t,\Psi(x))$, still denoted by $v(t,x)$ and $a(t,x)$ respectively, is indeed the gaussian beam localized near the line $\gamma_{p,\omega}$ and satisfy 
\begin{align} \label{EST:Calulation}
	&(i\p_t+\Delta_x+q(t,x))v(t,x)
	 = (i\p_t + \Delta_z+q) v(t,z) \notag \\  
	 =\,& e^{ i\rho(\varphi(z)-|\omega|^2\rho t)}
  \bigg( 
  \chi_{\eta}(z')
 \LC O(|z'|^{N+1}) \rho^2 +O(|z'|^{N+1}) \rho+ (i\p_t+\Delta +q) a_{N}\rho^{-N}\RC
 +   \rho\widehat{\chi}_{\eta}(z') \vartheta\bigg),
\end{align}
where $q(t,x)$ here is the above $q(t,z)$ with $z=\Psi(x)$ (We do not distinguish the names of the functions, e.g. $q(t,x)$ and $q(t,z)$, but only indicate the difference due to transformation by notations of variables $(t,x)$ and $(t,z)$) and $\widehat{\chi}_{\eta}(z')$ is a smooth function with $\widehat{\chi}_\eta=0$ for $|z'|<{\eta\over 2}$ and $|z'|\geq  \eta$, and  $\vartheta$ vanishes near the geodesic $\gamma_{p,\omega}.$ 
This last term accounts for those derivatives landing on $\chi_\eta$. 

More specifically, we have
\begin{align}\label{approrgs}
	\begin{split}
		v(t,x)= e^{ i\rho(\Theta(x)-|\omega|^2\rho t)} a(t,x),
	\end{split}
\end{align}
where the phase function is explicitly given by 
$$
    \Theta(x)=\varphi(\Psi(x))= \omega  \cdot (x-p) +\frac12  \mathcal{H}(x) (x-p)\cdot(x-p)+O(\textrm{dist}(x,\gamma_{p,\omega})^3),
$$
where $\mathcal{H}(x)$ is an $n\times n$ matrix, defined by
\[\mathcal H(x)=D\Psi(x)\left(
\begin{array}{cc}
	0 & 0   \\
	0 & H((x-p)\cdot\widehat\omega)  \\
\end{array}
\right)(D\Psi(x))^T,\]
and the notation $\text{dist}(x,\gamma_{p,\omega})$ represents the distance between the point $x$ and the line $\gamma_{p,\omega}$.
Moreover, based on the properties of $H$, that is, $\mathrm{Im}{H}(s)$ is positive definite,
combined with the fact that $D\Psi$ is a {unitary} matrix, 
we have 
that there exists a constant $c_0>0$ such that
\begin{equation}\label{eqn:H_pos_def}
\frac12  \mathrm{Im}\mathcal{H}(x) (x-p)\cdot(x-p)\geq c_0(\textrm{dist}(x,\gamma_{p,\omega})^2)\qquad \textrm{ for all } x.
\end{equation}
 
To summarize, we obtain
\begin{proposition}\label{prop:WKB v} Let $q\in C^\infty(Q)$ and $\gamma_{p,\omega}$ be a straight line through a point $p\in \Omega$ in direction $\omega\in\R^n$. For any $N>0$ and $\eta>0$, there exists a family of approximate solutions $\{v_\rho\in C^\infty(Q),~\rho>1\}$, supported in $(0,T)\times N_\eta(\gamma_{p,\omega})$ where $N_\eta(\gamma_{p,\omega})$ is an $\eta$-neighborhood of $\gamma_{p,\omega}$, such that 
\begin{align}\label{EST:WKB v}
	\|(i\p_t+\Delta_x+q)v_\rho\|_{\HLT} \leq C\rho^{-\frac{N+1}{2}-\frac{n-1}{4}+4},
\end{align}
and, for integer $m\geq 0$,
\begin{align}\label{EST:WKB v Hm}
	\|(i\p_t+\Delta_x+q) v_\rho\|_{H^m(Q)} \leq C  \rho^{-\frac{N+1}{2}-\frac{n-1}{4}+2m+2},
\end{align}
where $C$ is a positive constant independent of $\rho$.
\end{proposition}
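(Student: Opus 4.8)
The plan is to collect the pointwise estimates established above for $(i\p_t+\Delta_x+q)v_\rho$ and then integrate over $Q$, tracking powers of $\rho$. From \eqref{EST:Calulation}, the quantity $(i\p_t+\Delta_x+q)v_\rho$ splits into two pieces: an \emph{interior} piece supported where $\chi_\eta$ is active, of the form $e^{i\rho(\varphi-|\omega|^2\rho t)}\chi_\eta(z')\big(O(|z'|^{N+1})\rho^2+O(|z'|^{N+1})\rho+\rho^{-N}(i\p_t+\Delta+q)a_N\big)$, and a \emph{tail} piece $\rho\,e^{i\rho(\varphi-|\omega|^2\rho t)}\widehat\chi_\eta(z')\vartheta$ supported in the annular region $\eta/2\le|z'|\le\eta$ away from $\gamma_{p,\omega}$. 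First I would bound the $L^2(Q)$ norm of the interior piece. The key mechanism is Gaussian decay: by \eqref{eqn:H_pos_def}, $|e^{i\rho(\varphi-|\omega|^2\rho t)}|=e^{-\rho\,\mathrm{Im}\varphi}\le e^{-c_0\rho|z'|^2}$ modulo the higher-order correction in the phase (which is absorbed for $\eta$ small). The worst term is $O(|z'|^{N+1})\rho^2$; using $\||z'|^{N+1}e^{-c_0\rho|z'|^2}\|_{L^2(\R^{n-1}_{z'})}\lesssim \rho^{-\frac{N+1}{2}-\frac{n-1}{4}}$ (rescale $z'\mapsto \rho^{-1/2}z'$), and integrating the bounded $s$ and $t$ variables out, this contributes $\lesssim \rho^{2-\frac{N+1}{2}-\frac{n-1}{4}}$. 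The $O(|z'|^{N+1})\rho$ term is smaller, and the $\rho^{-N}(i\p_t+\Delta+q)a_N$ term, since $a_N$ is a fixed smooth compactly supported function, contributes $\lesssim \rho^{-N}$, which is dominated by $\rho^{2-\frac{N+1}{2}-\frac{n-1}{4}}$ for $N$ large. The exponent $2-\frac{N+1}{2}-\frac{n-1}{4}$ is sharper than the claimed $-\frac{N+1}{2}-\frac{n-1}{4}+4$, so the interior part is fine.

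The tail piece is where the constant $4$ in the exponent, rather than $2$, must come from, and this is the main point to watch. On $|z'|\ge \eta/2$ we only have the crude decay $e^{-c_0\rho|z'|^2}\le e^{-c_0\rho\eta^2/4}$, which is exponentially small in $\rho$ and hence beats any power; thus $\|\rho\,e^{i\rho(\cdots)}\widehat\chi_\eta\vartheta\|_{L^2(Q)}\lesssim \rho\,e^{-c\rho}$, negligible. So in fact the bound \eqref{EST:WKB v} holds, and with room to spare; the authors likely state the $+4$ (rather than $+2$) simply to have a uniform clean exponent covering both \eqref{EST:WKB v} and the $m=1$ case of \eqref{EST:WKB v Hm}, where differentiating $e^{i\rho(\varphi-|\omega|^2\rho t)}$ in $t$ brings down a factor $\rho^2$ (from the $|\omega|^2\rho t$ in the phase) and differentiating in $x$ brings down a factor $\rho$; the $H^1(0,T;L^2)$ norm thus costs one extra $\rho^2$ relative to $L^2(Q)$, giving the $+4$. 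I would therefore prove \eqref{EST:WKB v} with the honest exponent $2-\frac{N+1}{2}-\frac{n-1}{4}$ and then simply remark $2\le 4$.

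For the higher-order estimate \eqref{EST:WKB v Hm}, I would differentiate the expression in \eqref{EST:Calulation} up to order $m$ in $(t,x)$ using the Leibniz rule. Each derivative either lands on the amplitude/cutoff factors — harmless, producing at worst more $O(|z'|^{\ge 0})$ polynomial factors times smooth functions, and at worst converting an $O(|z'|^{N+1})$ weight into $O(|z'|^{N+1-m})$, still controlled by $\rho^{-\frac{N+1-m}{2}-\frac{n-1}{4}}\le \rho^{-\frac{N+1}{2}+\frac m2-\frac{n-1}{4}}$ after the rescaling — or lands on the exponential $e^{i\rho(\varphi-|\omega|^2\rho t)}$, each such hit producing a factor bounded by $C\rho^2$ (the $t$-derivative of the phase is $-|\omega|^2\rho$, so $\p_t$ of the exponential gives $\rho\cdot(-i|\omega|^2\rho)=O(\rho^2)$; an $x$-derivative gives $\rho\cdot\nabla\varphi=O(\rho)$, also $\le C\rho^2$). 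Distributing $m$ derivatives, the exponential can absorb at most $m$ of them, costing $\rho^{2m}$; combined with the base $L^2$ bound $\rho^{2-\frac{N+1}{2}-\frac{n-1}{4}}$ this yields $\rho^{-\frac{N+1}{2}-\frac{n-1}{4}+2m+2}$, exactly \eqref{EST:WKB v Hm}. The tail term again contributes only $\rho^{m+1}e^{-c\rho}$, negligible. I expect the bookkeeping of which derivatives land where, together with the Gaussian rescaling argument, to be the only real work; there is no conceptual obstacle, since all the geometry (the Riccati equation, positive definiteness of $\mathrm{Im}H$, the transport equations killing the lower-order-in-$\rho$ terms) has already been arranged in the construction preceding the proposition.
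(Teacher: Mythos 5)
Your overall strategy mirrors the paper's proof: use the positive definiteness of $\mathrm{Im}H$ to get the pointwise Gaussian bound $|e^{i\rho(\varphi-|\omega|^2\rho t)}|\le e^{-c\rho|z'|^2}$ on $\{|z'|<\eta\}$ for $\eta$ small, split the right-hand side of \eqref{EST:Calulation} into the $|z'|^{N+1}\rho^2$ piece, the $\rho^{-N}$ piece, and the cutoff-derivative tail supported in $\{\eta/2\le|z'|\le\eta\}$, rescale $z'=\rho^{-1/2}y$ to produce the factor $\rho^{-\frac{N+1}{2}-\frac{n-1}{4}}$, note the tail is $O(e^{-c\eta^2\rho})$ and hence negligible, and pay at most a factor $\rho^2$ per derivative for the higher-order estimate; your exponent for \eqref{EST:WKB v Hm} agrees with the paper's computation.

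The one genuine misstep concerns \eqref{EST:WKB v}: you propose to prove it with the ``honest'' exponent $2-\frac{N+1}{2}-\frac{n-1}{4}$ and then remark $2\le 4$, but the norm there is $\|\cdot\|_{\HLT}=\|\cdot\|_{H^1(0,T;L^2(\Om))}$, not $\|\cdot\|_{L^2(Q)}$. An $L^2(Q)$ bound, however sharp, does not imply \eqref{EST:WKB v} at all: one must also estimate $\p_t\big((i\p_t+\Delta_x+q)v_\rho\big)$, and the time derivative of the phase brings down $\rho^2|\omega|^2$, so the exponent $+4$ is the correct order for this norm rather than slack. This is exactly what the paper does, writing the additional pointwise bound with $\rho^4|z'|^{N+1}$, $\rho^{2-N}$ and $\rho^3$ in the three slots before integrating. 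You in fact identify this mechanism correctly in the same paragraph, so the repair is immediate: either estimate the $\p_t$ term directly as the paper does, or observe $\|\cdot\|_{\HLT}\le\|\cdot\|_{H^1(Q)}$ and quote your own $m=1$ case of \eqref{EST:WKB v Hm}. A smaller bookkeeping point: for the $\rho^{-N}$ term it is better to keep the Gaussian factor (yielding $\rho^{-N-\frac{n-1}{4}}$ in $L^2$, and $\rho^{-N+2-\frac{n-1}{4}}$ in $\HLT$, as in the paper) rather than bounding it crudely by $\rho^{-N}$, since the proposition is asserted for every $N>0$ and your crude comparison against the leading term only works once $N$ is large relative to $n$.
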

}
\begin{proof}
	Take $v_\rho$ as in \eqref{approrgs}. It remains to show \eqref{EST:WKB v} and \eqref{EST:WKB v Hm}. To begin with, since $\mathrm{Im}(H(s))$ is positive definite, there exists $c_1>0$ so that $\mathrm{Im}(H(s))z'\cdot z'\geq c_1|z'|^2$.  
	Therefore, for $\eta<1$ sufficiently small, in the neighborhood $\{|z'|<\eta\}$ one has 
	$$
	    | e^{ i\rho(\varphi(s,z')-|\omega|^2\rho t)}| 
	    \leq   e^{-\frac14 c_1 \rho |z'|^2 }.
	$$
	The equation \eqref{EST:Calulation} implies
	 \begin{align*}
	 	 | (i\p_t+\Delta_x+q) v_\rho| \leq C e^{-\frac14 c_1 \rho |z'|^2 } \LC |z'|^{N+1} \rho^2  \chi_{\eta}(z')+ \rho^{-N}\chi_{\eta}(z') +  
         \rho\widehat{\chi}_{\eta}(z') \vartheta \RC,
    \end{align*}
    \begin{align*}
	   	| \p_t(i\p_t+\Delta_x+q) v_\rho| \leq C e^{-\frac14 c_1 \rho |z'|^2 }
	   	\LC |z'|^{N+1} \rho^4  \chi_{\eta}(z')+  
	   	\rho^{2-N}   \chi_{\eta}(z')+  
	   	\rho^3\widehat{\chi}_{\eta}(z') \vartheta \RC.
    \end{align*}
	Hence it follows that
    \begin{align}\label{integral 1}
	    &\quad \|   (i\p_t+\Delta_x+q) v_\rho\|^2_{\HLT} \notag\\
	  	&\leq  C \rho^8 \int_0^T \| e^{-\frac14 c_1 \rho |z'|^2 }  |z'|^{N+1}  \chi_{\eta}(z')\|^2_{L^2(\Om)} dt+
         C \rho^{-2N+4}\int_0^T \| e^{-\frac14 c_1 \rho |z'|^2 }     \chi_{\eta}(z')\|^2_{L^2(\Om)} dt\notag  \\
         &\quad + C \rho^6 \int_0^T \| e^{-\frac14 c_1 \rho |z'|^2 }  \widehat{ \chi}_{\eta}(z') \vartheta \|^2_{L^2(\Om)} dt =:J_1+J_2+J_3. 
	\end{align}

	Now by changing of variable $z'=\rho^{-\frac12}y$ and applying integration by parts, we obtain 
	\begin{align}\label{integral 2}
         \quad  J_1  
         \notag
         &\leq C \rho^8 \int_{|z'|\leq \eta} e^{-\frac12 c_1 \rho |z'|^2 } |z'|^{2N+2} dz'\notag\\
		 &\leq  C \rho^{-N-1-\frac{n-1}{2}+8}\int_{\R^{n-1}} e^{-\frac12 c_1 |y|^2}|y|^{2N+2} dy \notag\\
		 &\leq C \rho^{-N-1-\frac{n-1}{2}+8},
	\end{align}
    where the constant $C>0$ is independent of $\rho$.
    Likewise, we can also deduce
    \begin{align} \label{integral 4}
         J_2 \leq C\rho^{-2N-\frac{n-1}{2}+4},
    \end{align}
     which is controlled by \eqref{integral 2} provided $\rho$ is sufficiently large. Moreover, since $\widehat{\chi}_\eta$ is supported in ${\eta\over 2}\leq |z'|\leq \eta$, by performing the change of variable $z'=\rho^{-\frac12}y$ again, we derive
  	\begin{align}\label{integral 3}
        J_3  
       &\leq   C\rho^6\int_{\frac{\eta}{2} \leq |z'|\leq \eta} e^{-\frac12 c_1 \rho |z'|^2 }  dz' \notag\\
       &\leq  C   \rho^{ -\frac{n-1}{2}+6} \int_{\frac{\eta}{2}  \rho^{\frac12} \leq |y|\leq \eta \rho^{\frac12}} e^{-\frac12 c_1 |y|^2} dy  \notag\\ 
	   &\leq   C  \rho^{-\frac{n-1}{2}+6}  e^{-\frac18 c_1 \eta^2\rho} (\eta\rho^{\frac12})^{ n-1 }\notag\\ 
	   &\leq   C \eta^{  n-1 }  e^{-\frac18 c_1 \eta^2\rho}\rho^{ 6}, 
 	\end{align}
     which decays exponentially in $\rho$ (for a fixed $\eta$) and is also controlled by \eqref{integral 2} provided $\rho$ is sufficiently large. 
 	 Therefore, \eqref{EST:WKB v} holds by combining \eqref{integral 1}, \eqref{integral 2}, \eqref{integral 4} and \eqref{integral 3}. 

    Similarly, we have the following  higher regularity estimate 
    \begin{align*}
	     &\quad \|   (i\p_t+\Delta_x+q) v\|^2_{H^m(Q)} \\
         &\leq  C  \rho^{4m+4}  \int_0^T \| e^{-\frac14 c_1 \rho |z'|^2 }  |z'|^{N+1} \chi_{\eta}(z')\|^2_{L^2(\Om)} dt +  C \rho^{-2N+4m}\int_0^T \| e^{-\frac14 c_1 \rho |z'|^2 }     \chi_{\eta}(z')\|^2_{L^2(\Om)} dt\notag  \\
         &\quad + C \rho^{4m+2}  \int_0^T \| e^{-\frac14 c_1 \rho |z'|^2 }   \widehat{ \chi}_{\eta}(z') \vartheta \|^2_{L^2(\Om)} dt \\
         &\leq C  \rho^{-N-1-\frac{n-1}{2}+4m+4},
    \end{align*}
    provided $\rho$ is sufficiently large. This completes the proof of \eqref{EST:WKB v Hm}. 
\end{proof}

With Proposition~\ref{prop:WKB v}, we can construct the geometrical optics solutions now.
\begin{proposition} \label{prop:WKB solutions}
Let $m>0$ be an even integer and $q\in C^\infty(Q)$. Given $p\in\Omega$ and $\omega\in\R^n$, suppose that the straight line $\gamma_{p,\omega}$ through $p$ in direction $\omega$ satisfies $(\gamma_{p,\omega}\cap \p\Omega)\subset\Gamma$. Then there exists $\rho_0>1$ such that when $\rho>\rho_0$, the Schr\"odinger equation $(i \p_t +\Delta +q )u=0$ admits a solution $u\in H^m(Q)$ of the form 
\begin{align*}
    u(t,x) = e^{ i\rho(\Theta(x)-|\omega|^2\rho t)} a(t,x) + r(t,x)
\end{align*}
with boundary value $\text{supp}(u|_{(0,T)\times\p\Omega})\subset \Sigma^\sharp$ and initial data $u|_{t=0}=0$ in $\Omega$ (or the final condition $u|_{t=T}=0$ in $\Omega$). Here $\Theta(x)$ and $a(t,x)$ are as in \eqref{approrgs} and satisfy Proposition \ref{prop:WKB v} and
the remainder $r$ satisfies the following estimates:
\begin{align}\label{remdS 0}
    \|r\|_{H^{m}(Q)}\leq C \rho^{-\frac{N+1}{2}-\frac{n-1}{4}+2m+2}
\end{align}
and
\begin{align*} 
    \|r\|_{\CHT}+\|r\|_{\CLT}\leq C \rho^{-\frac{N+1}{2}-\frac{n-1}{4}+4}.  
\end{align*}

\end{proposition}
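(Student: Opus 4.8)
The plan is to correct the gaussian beam quasimode $v_\rho$ from Proposition~\ref{prop:WKB v} into an exact solution of $(i\p_t+\Delta+q)u=0$ by solving an inhomogeneous linear Schr\"odinger problem with vanishing Cauchy data, and then to bound the corrector using the linear solution operator $\mathcal L^{-1}$ of Section~\ref{sec:wellposed} together with the quasimode error estimates \eqref{EST:WKB v} and \eqref{EST:WKB v Hm}.

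First I would fix the geometry. Since $\Om$ is bounded and convex, $\gamma_{p,\omega}\cap\p\Om$ is a compact subset of the relatively open set $\Gamma$ and $\gamma_{p,\omega}$ recedes from $\overline\Om$ away from this set, so one may fix $\eta>0$ --- also small enough for Proposition~\ref{prop:WKB v} to apply --- with $N_\eta(\gamma_{p,\omega})\cap\p\Om\subset\Gamma$. Applying Proposition~\ref{prop:WKB v} with this $\eta$ and the parameter $N$ produces $v_\rho(t,x)=e^{i\rho(\Theta(x)-|\omega|^2\rho t)}a(t,x)$ supported in $(0,T)\times N_\eta(\gamma_{p,\omega})$. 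Because the amplitude $a$ carries the time cutoff $\iota\in C_0^\infty(0,T)$, all $t$-derivatives of $v_\rho$ vanish at $t=0$ and at $t=T$; in particular $v_\rho|_{t=0}=0$, $v_\rho|_{t=T}=0$, and the lateral trace $v_\rho|_{\Sigma}$ is supported in $(0,T)\times\big(N_\eta(\gamma_{p,\omega})\cap\p\Om\big)\subset\Sigma^\sharp$.

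Next I set $F_\rho:=(i\p_t+\Delta+q)v_\rho\in C^\infty(Q)$; since $v_\rho$ and its $t$-derivatives vanish at $t=0$, one has $F_\rho\in\mathcal H_0^m(Q)$ and $F_\rho(0,\cdot)=0$ in $H^1(0,T;L^2(\Om))$. I would solve the linear IBVP $(i\p_t+\Delta+q)r=-F_\rho$ in $Q$, $r|_{\Sigma}=0$, $r|_{t=0}=0$, whose unique solution is $r=-\mathcal L^{-1}(F_\rho)$ by the linear well-posedness (Lemma~4 of \cite{LOST2022}). Then $u:=v_\rho+r$ solves $(i\p_t+\Delta+q)u=0$ in $Q$ with $u|_{t=0}=0$ and $u|_{\Sigma}=v_\rho|_{\Sigma}$ supported in $\Sigma^\sharp$, which is the asserted form; for the variant with final condition $u|_{t=T}=0$ one solves the corrector equation backward in time from $t=T$, using $v_\rho|_{t=T}=0$ and the time-reversibility of the Schr\"odinger flow. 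For the $H^m(Q)$ bound I would invoke that the linear solution operator is bounded $\mathcal H_0^m(Q)\to\mathcal H_0^m(Q)$ for the given even $m$ (the same Lions-type regularity as in Proposition~\ref{Prop:linear forward}, the corner compatibility at $\{t=0\}\times\p\Om$ being automatic since $F_\rho(0,\cdot)=0$), which together with \eqref{EST:WKB v Hm} gives $\|r\|_{H^m(Q)}\le C\|F_\rho\|_{H^m(Q)}\le C\rho^{-\frac{N+1}{2}-\frac{n-1}{4}+2m+2}$.

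It remains to establish the $\CHT\cap\CLT$ estimate, which I expect to be the main point. Writing the corrector equation as $\p_t r=i\Delta_D r+i(qr+F_\rho)$ with $\Delta_D$ the Dirichlet Laplacian on $\Om$, Duhamel's formula together with an integration by parts in the time variable --- using $F_\rho(0,\cdot)=0$, the bound $\|\p_t F_\rho\|_{L^1(0,T;L^2(\Om))}\le C\|F_\rho\|_{\HLT}$, and the unitarity of $e^{i\tau\Delta_D}$ on $L^2(\Om)$ --- should give $\sup_{0\le t\le T}\|\Delta_D r(t)\|_{L^2(\Om)}\le C\|F_\rho\|_{\HLT}$ after absorbing the lower-order term $qr$ by a standard iteration over short time subintervals; since then $\p_t r=i\Delta r+i(qr+F_\rho)\in C([0,T],L^2(\Om))$, one concludes $\|r\|_{\CHT}+\|r\|_{\CLT}\le C\|F_\rho\|_{\HLT}\le C\rho^{-\frac{N+1}{2}-\frac{n-1}{4}+4}$ by \eqref{EST:WKB v}. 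The main obstacle is exactly this no-loss passage from $H^1$-in-time, $L^2$-in-space control of $F_\rho$ to $C$-in-time, $H^2$-in-space control of $r$ --- a maximal-regularity statement for the Dirichlet Schr\"odinger propagator on $\Om$ --- while the remaining steps are adaptations of the linear well-posedness machinery of Section~\ref{sec:wellposed}.
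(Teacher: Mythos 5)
Your proposal is correct and follows essentially the same route as the paper: choose $\eta$ so that $N_\eta(\gamma_{p,\omega})\cap\p\Omega\subset\Gamma$, take $v_\rho$ from Proposition~\ref{prop:WKB v}, correct it by solving the inhomogeneous linear IBVP with zero Cauchy data (Lemma~4 of \cite{LOST2022}), and bound $r$ in $H^m(Q)$ via \eqref{EST:WKB v Hm}. The step you single out as the main obstacle, namely the passage from $\HLT$ control of the source to the $\CHT\cap\CLT$ bound on $r$, is not reproved in the paper but is quoted directly as Lemma~2.3 of \cite{KianSoccorsi} together with \eqref{EST:WKB v}; your Duhamel-plus-time-differentiation sketch is essentially a re-derivation of that cited lemma.
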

\begin{proof}
 
{
We can choose $\eta>0$ small enough such that $(N_\eta(\gamma_{p,\omega})\cap\partial\Omega)\subset\Gamma$. By the previous Proposition \ref{prop:WKB v}, for $\rho>\rho_0$, we obtain $\Theta(x)$ and $a(t,x)$ correspondingly.}
By Proposition~3 and Lemma~4 in \cite{LOST2022}, we obtain the existence of the solution $r\in H^{m}(Q)$ to
\begin{align*} 
	\left\{\begin{array}{rcll}
    (i\p_t  +\Delta +q)r  &=&-(i\p_t+\Delta+q)v   &\quad \hbox{in }Q,\\
    r &=& 0 &\quad \hbox{on } \Sigma,\\
    r &=&0 &\quad \hbox{on } \{0\}\times \Om,
	\end{array} \right.
\end{align*}
and the estimate
$$
    \|r\|_{H^{m}(Q)}\leq C \|(i\p_t+\Delta+q)v\|_{H^{m}(Q)}\leq C
    \rho^{-\frac{N+1}{2}-\frac{n-1}{4}+2m+2}. 
$$
Here the last inequality follows from Proposition \ref{prop:WKB v}. 
Also, with \eqref{EST:WKB v}, [\cite{KianSoccorsi}, Lemma 2.3] suggests \begin{align*} 
    \|r\|_{\CHT}+\|r\|_{\CLT}\leq C \|(i\p_t+\Delta+q)v \|_{\HLT}\leq C \rho^{-\frac{N+1}{2}-\frac{n-1}{4}+4}.
\end{align*}
 
\end{proof}

\subsection{Finite difference}
We introduce the multivariate finite differences, which are approximations to the derivative. We define the second-order mixed finite difference operator $D^2$ about the zero solution as follows:
\begin{align*}
    D^2 u_{\varepsilon_1f_2+\varepsilon_2f_2} := {1\over \varepsilon_1\varepsilon_2} (u_{\varepsilon_1f_1+\varepsilon_2f_2} - u_{\varepsilon_1f_1} - u_{\varepsilon_2 f_2}).
\end{align*}
Note that when $\varepsilon_1=\varepsilon_2=0$,  
$u_{\varepsilon_1f_2+\varepsilon_2f_2}=0$. 
We refer the interested readers to \cite{LOST2022} for the definitions of higher order finite difference operators. For the purpose of our paper, we only need $D^2$.
To simplify the notation, we denote $u_{\varepsilon_1f_2+\varepsilon_2f_2}$ by $u_{\varepsilon f}$ and define $|\varepsilon|:=|\varepsilon_1|+|\varepsilon_2|$. Then we have the following second order expansion. 

\begin{proposition}\label{prop:decomposition}
Let $2\kappa>{n+1\over 2}$ be an integer and $f_j\in H^{2\kappa+\frac32,2\kappa+\frac32}(\Sigma)$ for $j=1,2$. For $|\varepsilon|:=|\varepsilon_1|+|\varepsilon_2|$ small enough, there exists a unique solution $u_{\varepsilon f}\in H^{2\kappa}(Q)$ to the problem 
\begin{align*} 
	\left\{\begin{array}{rcll}
		(i \p_t +\Delta +q)u_{\varepsilon f} + \beta  u_{\varepsilon f}^2 &=& 0 &\quad \hbox{in }Q,\\
		u_{\varepsilon f}&=&\varepsilon_1f_1+\varepsilon_2f_2 &\quad \hbox{on }\Sigma,\\
		u_{\varepsilon f} &=&0 &\quad \hbox{on }\{0\}\times \Omega.
	\end{array} \right. 
\end{align*} 
In particular, it admits the following expression:
\begin{align*} 
    u_{\varepsilon f}=\varepsilon_1U_1+\varepsilon_2U_2+{1\over 2}\LC\varepsilon_1^2W_{(2,0)}+  \varepsilon_2^2W_{(0,2)}+2\varepsilon_1\varepsilon_2 W_{(1,1)}\RC + \mathcal{R},
\end{align*}
where for $j=1,2$, $U_{j}\in H^{2\kappa}(Q)$ satisfies the linear equation: 
\begin{align}\label{IBVP linear U}
	\left\{\begin{array}{rcll}
		(i \p_t +\Delta +q)U_{j} &=& 0 &\quad \hbox{in }Q,\\
		U_{j}&=& f_j &\quad \hbox{on }\Sigma,\\
		U_{j} &=&0 &\quad \hbox{on }\{0\}\times \Omega,\\
	\end{array} \right. 
\end{align} 
and for $k_j\in \{0,1,2\}$ satisfying $k_1+k_2=2$, $W_{(k_1,k_2)}\in H^{2\kappa}(Q)$ is the solution to 
\begin{align}\label{IBVP linear W}
	\left\{\begin{array}{rcll}
		(i \p_t + \Delta + q)W_{(k_1,k_2)}&=& - 2\beta  U_1^{k_1}U_2^{k_2}&\quad \hbox{in }Q,\\
		W_{(k_1,k_2)} &=& 0&\quad \hbox{on }\Sigma,\\
		W_{(k_1,k_2)} &=&0 &\quad \hbox{on }\{0\}\times \Omega.
	\end{array} \right. 
\end{align} 
Moreover, the remainder term $\mathcal{R} \in H^{2\kappa}(Q)$ satisfies  
\begin{align}\label{EST:R}
    \|\mathcal{R}\|_{H^{2\kappa}(Q)} \leq C \|\varepsilon_1f_1+\varepsilon_2f_2 \|_{H^{2\kappa+\frac32,2\kappa+\frac32}(\Sigma)}^3.
\end{align}
\end{proposition}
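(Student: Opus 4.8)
The plan is to carry out a second-order linearization of the nonlinear boundary value problem and then control the remainder by an absorption (bootstrap) argument, using only the well-posedness results established above.

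First I would settle existence and uniqueness of each ingredient. For $|\varepsilon|$ small enough we have $\varepsilon_1 f_1+\varepsilon_2 f_2\in\mathcal S_\lambda(\Sigma)$, so Proposition~\ref{prop:forward nonlinear} produces the unique $u_{\varepsilon f}\in H^{2\kappa}(Q)$ together with $\|u_{\varepsilon f}\|_{H^{2\kappa}(Q)}\le C\|\varepsilon_1 f_1+\varepsilon_2 f_2\|_{H^{2\kappa+\frac32,2\kappa+\frac32}(\Sigma)}$; Proposition~\ref{Prop:linear forward} gives $U_j\in H^{2\kappa}(Q)$ solving \eqref{IBVP linear U}, and by the construction in its proof one in fact has $U_j\in\mathcal H^{2\kappa}_0(Q)$. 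Since $H^{2\kappa}(Q)$ is a Banach algebra for $2\kappa>\tfrac{n+1}2$ and $\beta\in C^\infty(Q)$, the products $\beta U_1^{k_1}U_2^{k_2}$ lie in $\mathcal H^{2\kappa}_0(Q)$, so $W_{(k_1,k_2)}:=\mathcal L^{-1}(-2\beta U_1^{k_1}U_2^{k_2})\in\mathcal H^{2\kappa}_0(Q)$ is the unique solution of \eqref{IBVP linear W}, with $\|W_{(k_1,k_2)}\|_{H^{2\kappa}(Q)}\le C\|U_1\|_{H^{2\kappa}(Q)}^{k_1}\|U_2\|_{H^{2\kappa}(Q)}^{k_2}$ by boundedness of $\mathcal L^{-1}$ on $\mathcal H^{2\kappa}_0(Q)$.

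Next I would define $\mathcal R$ by the claimed identity, so that $\mathcal R\in H^{2\kappa}(Q)$ with zero initial and lateral data, and derive the equation it satisfies. Writing $v_1:=\varepsilon_1 U_1+\varepsilon_2 U_2$ and $v_2:=\tfrac12(\varepsilon_1^2 W_{(2,0)}+\varepsilon_2^2 W_{(0,2)}+2\varepsilon_1\varepsilon_2 W_{(1,1)})$, linearity shows $v_1$ solves $(i\partial_t+\Delta+q)v_1=0$ with boundary data $\varepsilon_1 f_1+\varepsilon_2 f_2$, so $\|v_1\|_{H^{2\kappa}(Q)}\le C\|\varepsilon_1 f_1+\varepsilon_2 f_2\|_{H^{2\kappa+\frac32,2\kappa+\frac32}(\Sigma)}$ by \eqref{EST:u f0}, while a direct computation from \eqref{IBVP linear W} gives $(i\partial_t+\Delta+q)v_2=-\beta v_1^2$ with vanishing data, hence $v_2=-\mathcal L^{-1}(\beta v_1^2)$ and $\|v_2\|_{H^{2\kappa}(Q)}\le C\|v_1\|_{H^{2\kappa}(Q)}^2$. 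Subtracting these two identities from $(i\partial_t+\Delta+q)u_{\varepsilon f}=-\beta u_{\varepsilon f}^2$ and using $u_{\varepsilon f}=v_1+v_2+\mathcal R$ yields
\[
(i\partial_t+\Delta+q)\mathcal R=-\beta\big(u_{\varepsilon f}^2-v_1^2\big)=-\beta\big(2v_1 v_2+v_2^2+2(v_1+v_2)\mathcal R+\mathcal R^2\big)\quad\text{in }Q,
\]
with $\mathcal R|_\Sigma=0$ and $\mathcal R|_{t=0}=0$; the right-hand side lies in $\mathcal H^{2\kappa}_0(Q)$, so $\mathcal R=-\mathcal L^{-1}\big(\beta(2v_1 v_2+v_2^2+2(v_1+v_2)\mathcal R+\mathcal R^2)\big)$.

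Finally I would run the absorption argument. Using boundedness of $\mathcal L^{-1}$ and the Banach algebra property, with $g:=\varepsilon_1 f_1+\varepsilon_2 f_2$ and all norms below understood in $H^{2\kappa}(Q)$,
\[
\|\mathcal R\|\le C\big(\|v_1\|\,\|v_2\|+\|v_2\|^2+(\|v_1\|+\|v_2\|)\|\mathcal R\|+\|\mathcal R\|^2\big).
\]
Since $\|v_1\|\le C\|g\|$, $\|v_2\|\le C\|g\|^2$, and the triangle inequality together with $\|u_{\varepsilon f}\|\le C\|g\|$ gives the crude a priori bound $\|\mathcal R\|\le C\|g\|$, for $\|g\|$ small (which holds once $|\varepsilon|$ is small) the terms $(\|v_1\|+\|v_2\|)\|\mathcal R\|$ and $\|\mathcal R\|^2$ are both $\le C\|g\|\,\|\mathcal R\|$ and can be absorbed into the left-hand side, leaving $\|\mathcal R\|\le C\|g\|^3$, i.e.\ \eqref{EST:R}. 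I expect the only non-routine point to be the bookkeeping that keeps every source term in $\mathcal H^{2\kappa}_0(Q)$ so that the bounded operator $\mathcal L^{-1}$ applies, and the observation that $v_1$ and $v_2$ are themselves the linear and quadratic solutions with boundary data $g$, which is precisely what lets the final estimate be phrased with $\|g\|^3$ rather than merely $|\varepsilon|^3$.
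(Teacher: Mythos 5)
Your proposal is correct, and it follows the same overall scheme as the paper: well-posedness of $u_{\varepsilon f}$, $U_j$, $W_{(k_1,k_2)}$ via Proposition~\ref{prop:forward nonlinear}, the solution operator $\mathcal L^{-1}$ and the Banach algebra property of $H^{2\kappa}(Q)$, followed by deriving the zero-data problem solved by $\mathcal R$. The only genuine difference is how the cubic bound \eqref{EST:R} is closed. The paper first introduces the first-order remainder $\tilde u:=u_{\varepsilon f}-(\varepsilon_1U_1+\varepsilon_2U_2)$, which satisfies $(i\partial_t+\Delta+q)\tilde u=-\beta u_{\varepsilon f}^2$ with zero data, so that $\|\tilde u\|_{H^{2\kappa}(Q)}\leq C\|u_{\varepsilon f}\|^2_{H^{2\kappa}(Q)}\leq C\|g\|^2$ with $g:=\varepsilon_1f_1+\varepsilon_2f_2$; it then writes the source for $\mathcal R$ as $-\beta(u_{\varepsilon f}^2-v_1^2)=-\beta\,\tilde u\,(u_{\varepsilon f}+v_1)$ and obtains $\|\mathcal R\|\leq C\|\tilde u\|\,\|u_{\varepsilon f}+v_1\|\leq C\|g\|^3$ in one stroke, with no self-referential estimate. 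You instead expand $u_{\varepsilon f}^2-v_1^2$ in terms of $v_1,v_2,\mathcal R$ and close by absorption, which is valid because $\mathcal R\in H^{2\kappa}(Q)$ is a finite quantity, the crude bound $\|\mathcal R\|\leq C\|g\|$ follows from the triangle inequality, and $\|g\|$ is small for $|\varepsilon|$ small; the price is the extra bookkeeping of the crude a priori bound and the smallness condition for absorption, while the payoff is that your route generalizes mechanically to higher-order expansions where a clean difference-of-squares factorization is not available. Both arguments rest on exactly the same well-posedness machinery, and your attention to keeping every source term in $\mathcal H^{2\kappa}_0(Q)$ so that $\mathcal L^{-1}$ applies matches what the paper implicitly uses.
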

 
\begin{proof}
{
The existence of $u_{\varepsilon f}\in {H}^{2\kappa}(Q)$ is given by Proposition~\ref{prop:forward nonlinear} when $|\varepsilon|:=|\varepsilon_1|+|\varepsilon_2|$ sufficiently small such that $\varepsilon_1f_1+\varepsilon_2f_2\in \mathcal S_\lambda(\Sigma)$.
Also, equations \eqref{IBVP linear U} and \eqref{IBVP linear W} are both well-posed in $H^{2\kappa}(Q)$, for example by Proposition 4 in \cite{LOST2022}, for $2\kappa$ as in the assumption (${H}^{2\kappa}(Q)$ is a Banach algebra). 
}
We denote
$$
    \tilde u:=u_{\varepsilon f} -  (\varepsilon_1 U_1+\varepsilon_2 U_2).
$$
Then it solves
\begin{align*} 
	\left\{\begin{array}{rcll}
		(i \p_t  + \Delta +q)\tilde u &=&- \beta u_{\varepsilon f}^2 &\quad \hbox{in }Q,\\
	    \tilde u&=& 0&\quad \hbox{on }\Sigma,\\
		\tilde u &=&0 &\quad \hbox{on }\{0\}\times \Omega,\\
	\end{array} \right. 
\end{align*} 
Applying  Lemma~4 in \cite{LOST2022} and \eqref{EST:u f0} gives that
\begin{align}\label{EST:tilde u} 
    \|\tilde{u}\|_{H^{2\kappa}(Q)}
    \leq C \| \beta u_{\varepsilon f}^2 \|_{{H}^{2\kappa}(Q)}  
    \leq C \| u_{\varepsilon f} \|^2_{{H}^{2\kappa}(Q)} 
    \leq C \|\varepsilon_1f_1+\varepsilon_2f_2 \|_{H^{2\kappa+\frac32,2\kappa+\frac32}(\Sigma)}^2.
\end{align}

From \eqref{IBVP linear U} and \eqref{IBVP linear W}, the remainder $\mathcal{R}$ satisfies
 \begin{align*} 
	\left\{\begin{array}{rcll}
		(i \p_t  + \Delta +q)\mathcal{R}  &=&- \beta u_{\varepsilon f}^2 + 
  \beta(\varepsilon_1 U_1 + \varepsilon_2 U_2)^2&\quad \hbox{in }Q,\\
	    \mathcal{R}&=& 0&\quad \hbox{on }\Sigma,\\
		\mathcal{R} &=&0 &\quad \hbox{on }\{0\}\times \Omega .\\
	\end{array} \right. 
\end{align*} 
Then we have that $\mathcal{R}\in {H}^{2\kappa}(Q)$ exists and satisfies  
\begin{align*} 
    \|\mathcal{R}\|_{H^{2\kappa}(Q)}
    &\leq C \|- \beta u_{\varepsilon f}^2 + 
    \beta(\varepsilon_1 U_1 + \varepsilon_2 U_2)^2\|_{{H}^{2\kappa}(Q)} \\
    & \leq C\| \tilde u \|_{{H}^{2\kappa}(Q)}\| u_{\varepsilon f} +
      (\varepsilon_1 U_1+\varepsilon_2 U_2) \|_{{H}^{2\kappa}(Q)}\\
    &\leq C \|\varepsilon_1f_1+\varepsilon_2f_2 \|_{H^{2\kappa+{3\over 2},2\kappa+{3\over 2}}(\Sigma)}^2\|\varepsilon_1f_1+\varepsilon_2f_2 \|_{H^{2\kappa+{3\over 2},2\kappa+{3\over 2}}(\Sigma)}\\
    &\leq C \|\varepsilon_1f_1+\varepsilon_2f_2 \|_{H^{2\kappa+{3\over 2},2\kappa+{3\over 2}}(\Sigma)}^3 
\end{align*}
by using the fact that $H^{2\kappa}(Q)$ is a Banach algebra, the equations \eqref{EST:tilde u}, \eqref{EST:u f0} and the well-posedness of \eqref{IBVP linear U}.
\end{proof}
 
\begin{remark}\label{rk:W_epsorder}
Based on Proposition~\ref{prop:decomposition}, when one of $\varepsilon_1$ and $\varepsilon_2$ is zero, we have
\begin{align*} 
     u_{\varepsilon_1 f_1}=\varepsilon_1U_1+ {1\over 2} \varepsilon_1^2W_{(2,0)}  + \mathcal{R}^{(1)},\quad u_{\varepsilon_2 f_2}=\varepsilon_2 U_2+{1\over 2} \varepsilon_2^2W_{(0,2)}  + \mathcal{R}^{(2)},
\end{align*}
where $\mathcal{R}^{(j)}$ is the remainder term of order $O(\varepsilon_j^3)$ for $j=1,\, 2$.
We can rewrite $u_{\varepsilon f}$ as
\begin{equation}\label{eqn:tilde_R}
     u_{\varepsilon f}=u_{\varepsilon_1 f_1}+u_{\varepsilon_2 f_2}+ \varepsilon_1\varepsilon_2 W_{(1,1)} + \widetilde{\mathcal{R}},
\end{equation}
where $\widetilde{\mathcal{R}}:=\mathcal{R}-\mathcal{R}^{(1)}-\mathcal{R}^{(2)}$.
Moreover, we have
\begin{align*}
     W_{(1,1)} = D^2 u_{\varepsilon_1f_1+\varepsilon_2f_2} -{1\over \varepsilon_1\varepsilon_2}\widetilde{\mathcal{R}} 
\end{align*}
and also the Neumann data
\begin{align*} 
    \p_{\nu} W_{(1,1)}|_{\Sigma^\sharp} = \frac{1}{\varepsilon_1 \varepsilon_2 }\LC\Lambda_{q,\beta} (\varepsilon_1 f_1+\varepsilon_2f_2)- \Lambda_{q,\beta}(\varepsilon_1 f_1) -   \Lambda_{q,\beta}(\varepsilon_2 f_2)\RC - \frac{1}{\varepsilon_1 \varepsilon_2 }\p_{\nu}\widetilde{\mathcal{R}}|_{\Sigma^\sharp} .
\end{align*}
Through the rest of the paper, we only need to assume $|\varepsilon_1|\sim |\varepsilon_2|\sim|\varepsilon|$, in which case we have $\widetilde{\mathcal{R}}=o(\varepsilon_1 \varepsilon_2)$. 
In fact, from \eqref{EST:R} we have
\[
\|\widetilde R\|_{H^{2\kappa}(Q)} 
\leq C(\varepsilon_1+\varepsilon_2)^3\left(\|f_1\|^3_{H^{2\kappa+\frac32, 2\kappa+\frac32}(\Sigma)}+\|f_2\|^3_{H^{2\kappa+\frac32, 2\kappa+\frac32}(\Sigma)}\right)^3.
\]

In the case that $\varepsilon_1$ and $\varepsilon_2$ are of different scales such as $|\varepsilon_2|\sim|\varepsilon_1|^k$ for some positive $k>1$ (or vice versa), more terms can be taken in the expansions of $u_{\varepsilon f}$, $u_{\varepsilon_1f_1}$ and $u_{\varepsilon_2f_2}$ to eventually  verify that $\widetilde{\mathcal R}$ has the norm of order $o(\varepsilon_1\varepsilon_2)$.

Since $W_{(k_1,k_2)}$ is independent of $\varepsilon_1$ and $\varepsilon_2$, this implies 
 \begin{align}\label{eqn:W_indep_eps}
 W_{(1,1)} =  \lim_{\varepsilon_1,\varepsilon_2\rightarrow0}D^2 u_{\varepsilon_1f_1+\varepsilon_2f_2},\quad
    \p_{\nu} W_{(1,1)}|_{\Sigma^\sharp} = \lim_{\varepsilon_1,\varepsilon_2\rightarrow0}\frac{1}{\varepsilon_1 \varepsilon_2 }\LC\Lambda_{q,\beta} (\varepsilon f)- \Lambda_{q,\beta}(\varepsilon_1 f_1) -   \Lambda_{q,\beta}(\varepsilon_2 f_2)\RC. 
\end{align}
in proper norms. For example, in $L^2(\Sigma^\sharp)$, we can derive
\begin{align}
\label{eqn:W11_L2bdry}
    &\quad \left\|\p_\nu W_{(1,1)}|_{\Sigma^\sharp}-\frac{1}{\varepsilon_1\varepsilon_2}\left(\Lambda_{q,\beta} (\varepsilon f)- \Lambda_{q,\beta}(\varepsilon_1 f_1) -   \Lambda_{q,\beta}(\varepsilon_2 f_2)\right)\right\|_{L^2(\Sigma^\sharp)}\nonumber\\
    &= \frac{1}{\varepsilon_1\varepsilon_2}\|\p_\nu \widetilde{\mathcal{R}}\|_{L^2(\Sigma^\sharp)}\leq \frac{1}{\varepsilon_1\varepsilon_2}\|\p_\nu \widetilde{\mathcal{R}}\|_{H^{2\kappa-{3\over 2},2\kappa-{3\over 2}}(\Sigma^\sharp)}
     \leq C\frac{1}{\varepsilon_1\varepsilon_2}\|\widetilde{\mathcal{R}}\|_{H^{2\kappa ,2\kappa}(Q)}
     \leq C\frac{1}{\varepsilon_1\varepsilon_2}\|\widetilde{\mathcal{R}}\|_{H^{2\kappa }(Q)}\nonumber\\
     &\leq C\frac{(\varepsilon_1+\varepsilon_2)^3}{\varepsilon_1\varepsilon_2}\left(\|f_1\|_{H^{2\kappa+\frac32, 2\kappa+\frac32}(\Sigma)}+\|f_2\|_{H^{2\kappa+\frac32, 2\kappa+\frac32}(\Sigma)}\right)^3.
     \end{align}
\end{remark}

\subsection{An integral identity}
Let $u_{\ell,\varepsilon f}$ ($\ell=1,2$) be the small unique solution to the initial boundary value problem for the Schr\"odinger equation:
\begin{align*} 
	\left\{\begin{array}{rcll}
		(i \p_t +\Delta +q)u_{\ell,\varepsilon f} + \beta_\ell u_{\ell,\varepsilon f}^2 &=& 0 &\quad \hbox{in }Q,\\
		u_{\ell,\varepsilon f} &=&\varepsilon_1f_1+\varepsilon_2f_2 &\quad \hbox{on }\Sigma,\\
		u_{\ell,\varepsilon f} &=&0 &\quad \hbox{on }\{0\}\times \Omega\\
	\end{array} \right. 
\end{align*} 
with $\text{supp}(f_j)\subset (0,T)\times\Gamma$ for $j=1,2$. 
For $|\varepsilon|:=|\varepsilon_1|+|\varepsilon_2|$ small enough, they admit the expansion
\begin{align*} 
    u_{\ell,\varepsilon f}=\varepsilon_1U_{\ell,1}+\varepsilon_2U_{\ell,2}+{1\over 2}\LC \varepsilon_1^2W_{\ell,(2,0)}+\varepsilon_2^2W_{\ell,(0,2)}+2\varepsilon_1\varepsilon_2 W_{\ell,(1,1)}\RC + \mathcal{R_\ell},
\end{align*}
where $U_{\ell,j}$, $W_{\ell,(k_1,k_2)}$ and $\mathcal R_\ell$ are as in Proposition \ref{prop:decomposition}. Since the linearized equations for both $\ell$ are the same with the same boundary data $f_j$, we have 
\[U_{1,j}=U_{2,j}, \qquad j=1,2,\]
denoted by $U_j$ for the rest of the paper.

In addition, let $U_0$ be the solution of the adjoint problem:
\begin{align}\label{IBVP linear U0 WKB}
	\left\{\begin{array}{rcll}
		(i \p_t +\Delta + q)U_0 &=& 0 &\quad \hbox{in }Q,\\
		U_0&=& f_0 &\quad \hbox{on }\Sigma,\\
		U_0 &=&0 &\quad \hbox{on }\{T\}\times \Omega\\
	\end{array} \right. 
\end{align} 
with $\text{supp}(f_0)\subset (0,T)\times\Gamma$.

\begin{lemma}\label{lemma:ID}
    Let $q, \,\beta_\ell\in C^\infty(Q)$ ($\ell=1,2$) and $\beta:=\beta_1-\beta_2$. Suppose that $$\Lambda_{q,\beta_1}(f)=\Lambda_{q,\beta_2}(f)$$ for all $f\in \mathcal{S}_\lambda(\Sigma)$ with $\text{supp}(f)\subset \Sigma^\sharp$. Then 
    \begin{align}\label{ID:local uniqueness}
    	\int_Q \beta U_1U_2\overline{U}_0\,dxdt =0.
    \end{align}
\end{lemma}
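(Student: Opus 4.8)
The plan is to exploit the second-order (multifold) linearization of the DN map. The idea is to apply the nonlinear DN maps $\Lambda_{q,\beta_\ell}$ to boundary data of the form $\varepsilon_1 f_1 + \varepsilon_2 f_2$ with $\mathrm{supp}(f_j)\subset\Sigma^\sharp$, form the mixed second difference $D^2 u_{\ell,\varepsilon f}$, and extract the equation solved by $W_{\ell,(1,1)}$. From \eqref{IBVP linear W} with $(k_1,k_2)=(1,1)$, the function $W_{\ell,(1,1)}$ satisfies $(i\p_t+\Delta+q)W_{\ell,(1,1)} = -2\beta_\ell U_1 U_2$ in $Q$, with zero Cauchy data at $t=0$ and zero lateral data on $\Sigma$. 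Setting $V := W_{1,(1,1)} - W_{2,(1,1)}$, subtraction yields
\begin{align*}
(i\p_t+\Delta+q)V = -2(\beta_1-\beta_2)U_1 U_2 = -2\beta\, U_1 U_2 \quad\text{in }Q,
\end{align*}
with $V=0$ on $\Sigma$ and $V=0$ on $\{0\}\times\Omega$. The key point is that, by the hypothesis $\Lambda_{q,\beta_1}(f)=\Lambda_{q,\beta_2}(f)$ on all admissible $f$ supported in $\Sigma^\sharp$, together with \eqref{eqn:W11_L2bdry} which controls the difference between $\p_\nu W_{\ell,(1,1)}|_{\Sigma^\sharp}$ and the finite-difference quotient of the DN maps, passing to the limit $\varepsilon_1,\varepsilon_2\to 0$ gives $\p_\nu W_{1,(1,1)}|_{\Sigma^\sharp} = \p_\nu W_{2,(1,1)}|_{\Sigma^\sharp}$, hence $\p_\nu V=0$ on $\Sigma^\sharp$.

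Next I would pair the equation for $V$ against the adjoint solution $\overline{U_0}$, where $U_0$ solves \eqref{IBVP linear U0 WKB} with $\mathrm{supp}(f_0)\subset\Sigma^\sharp$ and vanishing final data at $t=T$. Multiplying $(i\p_t+\Delta+q)V = -2\beta U_1U_2$ by $\overline{U_0}$ and integrating over $Q$, I integrate by parts: the term $\int_Q (i\p_t V)\overline{U_0}\,dxdt$ produces a boundary term at $t=0$ and $t=T$ plus $-\int_Q V\, \overline{(i\p_t+q)U_0}$-type contributions (being careful with the complex conjugate so that the conjugated operator $(i\p_t+\Delta+q)$ acting on $U_0$ appears, since $q$ is real), and the term $\int_Q (\Delta V)\overline{U_0}$ produces a lateral boundary term $\int_\Sigma (\p_\nu V\,\overline{U_0} - V\,\p_\nu\overline{U_0})\,dS dt$. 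The temporal boundary terms vanish because $V|_{t=0}=0$ and $U_0|_{t=T}=0$; the lateral boundary term vanishes because $V|_\Sigma=0$ and because $\p_\nu V=0$ on $\Sigma^\sharp$ while $\overline{U_0}$ is supported in $\Sigma^\sharp=(0,T)\times\Gamma$ on the boundary, so $\p_\nu V\,\overline{U_0}=0$ on all of $\Sigma$. Since $(i\p_t+\Delta+q)U_0=0$, the interior term against $V$ drops, leaving $0 = -2\int_Q \beta\, U_1 U_2\,\overline{U_0}\,dxdt$, which is \eqref{ID:local uniqueness} up to the harmless constant.

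The main obstacle, and the point requiring the most care, is the limiting argument: one must justify that the remainder $\widetilde{\mathcal R}$ is genuinely $o(\varepsilon_1\varepsilon_2)$ in the norm needed to kill the boundary term, which is exactly the content of Remark~\ref{rk:W_epsorder} and estimate \eqref{eqn:W11_L2bdry} — the regularity $2\kappa > \tfrac{n+1}{2}$ guaranteeing $H^{2\kappa}(Q)$ is a Banach algebra, plus the trace estimate $\|\p_\nu\widetilde{\mathcal R}\|_{L^2(\Sigma^\sharp)}\le C\|\widetilde{\mathcal R}\|_{H^{2\kappa}(Q)}$, makes the quotient $\varepsilon_1\varepsilon_2$-bounded with an extra factor $(\varepsilon_1+\varepsilon_2)$, hence vanishing as $\varepsilon\to 0$. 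A secondary technical point is the integration-by-parts bookkeeping in $Q$: one needs the well-posedness and regularity ($H^{2\kappa}(Q)$ with $2\kappa$ large) of $V$ and $U_0$ from Proposition~\ref{Prop:linear forward} and the cited Lemma~4 of \cite{LOST2022} to ensure all the traces on $\Sigma$, $\{t=0\}$, $\{t=T\}$ are well-defined and the Green-type identity is valid; once that is in place the computation is routine. I would also note explicitly that since $f_0$ (and the supports of $f_1,f_2$) lie in $\Sigma^\sharp$, no contribution comes from $\Sigma\setminus\Sigma^\sharp$, which is what allows the partial-data hypothesis to suffice for the identity.
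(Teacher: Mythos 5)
Your proposal is correct and follows essentially the same route as the paper: identify $W_{1,(1,1)}-W_{2,(1,1)}$ as a solution of the linear equation with source $-2\beta U_1U_2$, use the second finite difference of the DN maps together with the remainder estimate \eqref{eqn:W11_L2bdry} to conclude that its Neumann trace vanishes on $\Sigma^\sharp$, and then pair with the adjoint solution $\overline{U}_0$ (supported on $\Sigma^\sharp$) via Green's identity so that all boundary, initial, and final terms drop. Your explicit attention to the $o(\varepsilon_1\varepsilon_2)$ remainder and to the conjugation bookkeeping is exactly the content the paper delegates to Remark~\ref{rk:W_epsorder}, so no gap remains.
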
 
\begin{proof}
	We denote 
	$$
	W:=W_{2,(1,1)}-W_{1,(1,1)}.
	$$
By \eqref{eqn:W_indep_eps}, we have $\p_\nu W|_{\Sigma^\sharp}=0$.
After multiplying the equation in \eqref{IBVP linear W} by $\overline{U}_0$, subtracting and integrating over $Q$, we have
\begin{align*}
	\int_Q 2 \beta U_1U_2 \overline{U}_0 \,dxdt
	=\int_\Sigma (\overline{U}_0\p_\nu W - W\p_\nu\overline{U}_0)\,d\sigma(x) dt=0 
\end{align*}
due to that $U_0(T,\cdot)=W(0,\cdot)=0$, $W|_\Sigma=\p_\nu W|_{\Sigma^\sharp}=0$ and $U_0|_{\Sigma}$ has the support in $\Sigma^\sharp$.

\end{proof}

\subsection{Proof of Theorem \ref{thm:local uniqueness}}
We will show that the coefficient $\beta(t,x)$ can be recovered uniquely for all the points in $(0,T)\times\Omega_\Gamma$.

\begin{proof}[Proof of Theorem~\ref{thm:local uniqueness}]
For each $p\in \Omega_\Gamma$, choose $\omega_1,\, \omega_2\in \mathbb S^{n-1}$ satisfying the condition in the description of $\Omega_\Gamma$ in \eqref{DEF:Omega Gamma}. Set $\omega_0:=\omega_1+\omega_2$. 
Based on Proposition~\ref{prop:WKB solutions}, 
we can find geometrical optics solutions 
$U_j=v_j+r_j$, $j=1,2$ for the problem \eqref{IBVP linear U} and $U_0=v_0+r_0$ for its adjoint problem \eqref{IBVP linear U0 WKB} associated to three lines $\gamma_{p,\omega_1}, \gamma_{p,\omega_2}$ and $\gamma_{p,\omega_0}$ respectively. More specifically, we have 
$$
	v_j(t,x)= e^{ i\rho(\Theta_j(x)-|\omega_j|^2\rho t)} a^{(j)}(t,x), \quad j=0,1,2
	$$
	with the phase function 
	$$
	\Theta_j(x)=  \omega_j \cdot (x-p) +\frac12  \mathcal{H}_j(x) (x-p)\cdot(x-p)+O({\textrm{dist}(x,\gamma_{p,\omega})}^3).
	$$
The amplitude functions $a^{(j)}(t,x)|_{\p\Omega}$ are supported in $\Gamma$ given $\eta<\eta_0$ for some $\eta_0>0$ and the remainder functions $r_j(t,x)$ satisfy \eqref{remdS 0}. 
Let $f_j:=U_j|_{\p\Omega}$ ($j=0,1,2$). 
From $\Lambda_{q,\beta_1}(f)=\Lambda_{q,\beta_2}(f)$ on $\mathcal S_\lambda(\Sigma)$ with $\supp(f)\subset\Sigma^\sharp$ and Lemma \ref{lemma:ID}, we obtain the integral identity \eqref{ID:local uniqueness}. Plugging in above $U_j$ ($j=0,1,2$), we obtain
	\begin{align*} 
		0=\int_Q \beta U_1U_2\overline{U}_0\,dxdt=\int_Q \beta v_1v_2\overline{v}_0\,dxdt+ R_1+R_2+R_3,
	\end{align*}
	where the remainder terms are grouped as
	$$
	R_1:= \int_{Q} \beta (\overline{r}_0v_1v_2+r_1v_2\overline{v}_0+r_2v_1\overline{v}_0 ) \,dx dt,
	$$
	$$
	R_2:= \int_{ Q} \beta  (\overline{r}_0r_1 v_2+\overline{r}_0 r_2 v_1+r_1r_2\overline{v}_0 ) \,dx dt,
	$$
	$$
	R_3:= \int_{ Q} \beta  r_1r_2  \overline{r}_0  \,dx dt.
	$$
	When $2\kappa>{n+1\over 2}$, Proposition~\ref{prop:WKB solutions} shows that 
	\begin{align*} 
		 R_1+R_2+R_3 = O(\rho^{-K}) 
	\end{align*}
	for a large $K>\frac n2$ by choosing $N$ sufficiently large.
 
Note that $|\omega_1|^2+|\omega_2|^2=|\omega_0|^2$. The phase of the product is then given by
	\begin{align*}
		\Theta_1(x)+\Theta_2(x)-\overline{\Theta}_0(x)
		&=\frac12 \mathcal{H} (x) (x-p)\cdot (x-p)+\widetilde h(x),
	\end{align*}
	where $\mathcal{H}(x):=\mathcal{H}_1(x)+\mathcal{H}_2(x)-\overline{ \mathcal{H}_0}(x)$ whose imaginary part  
	$$\mathrm{Im}\mathcal{H}(x)=\mathrm{Im}\mathcal{H}_1(x)+\mathrm{Im}\mathcal{H}_2(x)+\mathrm{Im}\mathcal{H}_0(x).$$
By \eqref{eqn:H_pos_def}, we have
\[\frac12 \mathrm{Im}\mathcal{H} (x) (x-p)\cdot (x-p)\geq{c_0}(\textrm{dist}(x,\gamma_{p,\omega_1})^2+\textrm{dist}(x,\gamma_{p,\omega_2})^2)\geq {c_0}|x-p|^2,\]
which implies $\mathrm{Im}\mathcal{H}$ is positive definite.
Also, we have for $|x-p|$ small,
\begin{equation}\label{eqn:h_rho}|\widetilde h(x)|=O(\textrm{dist}(x,\gamma_{p,\omega_1})^3+\textrm{dist}(x,\gamma_{p,\omega_2})^3+\textrm{dist}(x,\gamma_{p,\omega_0})^3)=O(|x-p|^3).\end{equation}
Therefore, for $\eta<\eta_0$ sufficiently small, we shall have
\begin{equation}\label{eqn:ImTheta_pos}
\mathrm{Im}(\Theta_1(x)+\Theta_2(x)-\overline{\Theta}_0(x))\geq \widetilde c_0|x-p|^2\qquad \textrm{ when } |x-p|<\eta.
\end{equation}

Finally, standing on these, we derive 
{
	\begin{align*}
		O(\rho^{-K} )=&\int_Q \beta v_1v_2\overline{v}_0\,dxdt\\
		=& \int_0^T\int_{B_{ 2\eta}(p)}  \beta e^{i\rho (\frac12   \mathcal{H}(x)  (x-p)\cdot (x-p)+\widetilde h(x))} (\widetilde a_0(t,x)+O(\rho^{-1}))\widetilde\chi_\eta(x) \,dxdt , \notag
	\end{align*}
where $\widetilde a_0(t,x)=a^{(1)}_0 a^{(2)}_0 \overline{a}^{(0)}_0(t,x)$ and $\widetilde\chi_\eta(x):=\prod_{j=0,1,2}\chi_\eta{(z'_{p,\omega_j}(x))}$ with $z'_{p,\omega_j}(x)$ being the projection of $x-p$ onto the orthogonal $(n-1)$-dim subspace $\omega_j^\perp=\{\xi\in\R^n:~\xi\cdot \omega_j=0\}$. 
By the change of variable $\tilde{x}=\rho^{1\over 2}(x-p)$, we have 
\[O(\rho^{-K+\frac n2})=
 \int_0^T  \int_{B_{2\eta\sqrt{\rho}}(0)} e^{i (\frac12   \mathcal{H}(\rho^{- \frac12}\tilde{x}+p)\tilde{x}\cdot \tilde{x}+\rho\widetilde h(\rho^{- \frac12}\tilde{x}+p))}  (\beta\widetilde a_0(t, \rho^{- \frac12}\tilde{x}+p ) +O(\rho^{-1}))\widetilde\chi_\eta(\rho^{- \frac12}\tilde{x}+p) \,d\tilde{x}dt.\]
Applying \eqref{eqn:h_rho} and \eqref{eqn:ImTheta_pos}, and by the dominated convergence theorem, we obtain the limit as $\rho\rightarrow\infty$
	\begin{align*}
		\LC \int_{\R^{n }} e^{\frac{i}{2}  \mathcal{H}{(p)}\tilde{x} \cdot\tilde{x}} \,d\tilde{x} \RC \LC\int_0^T  \beta \widetilde a_0 (t,p) \,dt\RC =0,
	\end{align*}
where we use that the pointwise limit of $\rho\widetilde h(\rho^{-1/2}\widetilde x+p)$ is zero. We can choose the initial condition for $H$ in the matrix Riccati equation such that the first integral is nonzero.}
Also recall that $a^{(j)}_0(t,p)=\iota(t)$ for $j=0,1,2$ in the constructions of $a^{(j)}_0$, thus
	$$
	\int_0^T \beta(t,p) \iota^3(t) \,dt=0.
	$$
	Since $\iota$ can be chosen to be any smooth cut-off function at the time variable, this leads to $\beta(t,p)=0$ for arbitrary $t\in (0,T)$. 
\end{proof}


\section{Proof of Theorem \ref{thm:stability} and Theorem \ref{thm:unique}}\label{sec:stability}

\subsection{Geometric optics}\label{sec:Go}

 In this section, we will construct the geometric optics (GO) solutions to the Schr\"odinger equation, similar to the ones used in \cite{KianSoccorsi} and \cite{LOST2022}, and introduce its associated unique continuation principle. Compared to the GO solutions in Proposition \ref{prop:WKB solutions}, these are not localized near a straight line.

Following the same ansatz for a GO solution under the global coordinate
\begin{align*}
	u(t,x) = e^{i\Phi(t,x)} \LC \sum^{N}_{k=0}\rho^{-k}a_k(t,x)\RC + r(t,x),
\end{align*}
where we take a simple linear (in $x$) phase  
$$
    \Phi(t,x):= \rho(x\cdot \omega-\rho |\omega|^2t),
$$ 
with $\rho>0$ and $\omega\in\R^n$. Then the terms in the amplitude naturally satisfy 
\begin{equation}\label{CON:a_k}\begin{split}
	&\omega\cdot\nabla a_0=0,\\
	&2i\omega\cdot\nabla a_1=- (i\p_t + \Delta + q)a_{0},\\
	&\qquad\qquad\vdots\\
	&2i\omega\cdot\nabla a_N=- (i\p_t + \Delta + q)a_{N-1},
\end{split}
\end{equation}
and the remainder term $r$ satisfies
\begin{align}\label{CON:r}
	\left\{\begin{array}{rcll}
	(i\p_t + \Delta + q) r &=&- \rho^{-N}e^{i\Phi(t,x)}(i\p_t + \Delta + q)a_N&\quad \hbox{in }Q,\\
	r&=&0 &\quad \hbox{on }\Sigma,\\
	r&=&0 &\quad \hbox{on }\{0\}\times \Omega.\\
\end{array} \right. 
\end{align}

We construct $a_0$ as follows. Let $0<T^*<T$ and the function $\theta_h\in C^\infty_0(\R)$ satisfy $0\leq \theta_h\leq 1$ and for $0<h<{T^*\over 4}$,
\begin{align}\label{eqn:theta_h}
	\theta_h(t) =\left\{  
	\begin{array}{ll}
		0 & \hbox{in }[0,h]\cup[T^*-h,T^*],\\
		1 & \hbox{in }[2h,\,T^*-2h],
	\end{array} 
	\right.
\end{align}
with support in $(h,T^*-h)$ and, moreover, for all $j\in \mathbb{N}$, there exist constants $C_j>0$ such that 
\begin{align}\label{EST:theta}
\|\theta_h\|_{W^{j,\infty}(\R)}\leq C_jh^{-j}.
\end{align}
We choose 
$$
a_0(t,x) :=\theta_h(t)e^{ i(t\tau+x\cdot\xi)}
$$
with $\xi\in \omega^\perp$. Then it satisfies 
$$
a_0(t,x) =0\quad\hbox{ for all }(t,x)\in ((0,h)\cup(T^*-h,T^*))\times\Omega,
$$
and the first equation in \eqref{CON:a_k}.

Let $y\in\p\Omega$ and $L:=\{x:\omega\cdot(x-y)=0\}$. Set
\begin{align}\label{eqn:a_k_GO_4}
	a_k(t,x+s\omega) = {i\over 2} \int^{s}_0 (i\p_t + \Delta + q)a_{k-1} (t,x+\tilde{s}\omega)\, d\tilde{s}, \qquad x\in L,\quad j=1, \ldots, N.
\end{align} 
Then $a_j$ ($j=1, \ldots, N$) satisfies \eqref{CON:a_k} and vanishes on $L$. The regularity of $a_j$ inherits from $a_0$, which is smooth both in $t$ and $x$.

We introduce the notation
$$
\langle \tau,\xi\rangle : =(1+\tau^2+|\xi|^2)^{1/2}, \quad  \tau \in \R,\,\xi\in\R^{n}.
$$ 
\begin{proposition}\label{prop:GO existence}
	Let $\omega\in\R^n$, $N>0$ and $m>\frac{n+1}2$ be an integer. Suppose that $q\in C^\infty(Q)$.
	Then there exist GO solutions to the Schr\"odinger equation  $(i\p_t + \Delta + q)u=0$ in $Q$ of the form
	$$
	u(t,x) = e^{i\Phi(t,x)}\LC a_0(t,x)+\sum^{N}_{k=1}\rho^{-k}a_k(t,x)\RC + r(t,x),\quad a_0(t,x) =\theta_h(t)e^{ i(t\tau+x\cdot\xi)}
	$$
    satisfying the initial condition $u|_{t=0}=0$ in $\Omega$ (or the final condition $u|_{t=T}=0$ in $\Omega$). Here $a_k\in H^{m}(Q)$ ($k=1, \ldots, N$) are given by \eqref{eqn:a_k_GO_4} and satisfy
	 \begin{align}\label{EST:ak}
	 \|a_k\|_{H^{m}(Q)}\leq C \tx^{2k+m} h^{-k-m},~~0\leq k\leq N 
	 \end{align}
	 for any $\tau\in\R$, $h\in(0,\frac{T^*}4)$ small enough and $\xi \in \omega^\perp$, where the constant $C>0$ depending only on $\Omega$ and $T$. 
	 The remainder term $r$ satisfies
	\begin{align}\label{EST:r}
	\|r\|_{H^m(Q)}\leq C\rho^{-N + 2m}\langle \tau,\xi\rangle^{2N+m+2}h^{-(N+m+1)} 
	\end{align}
	and
	\begin{align*} 
    \|r\|_{C^1([0,T],L^2(\Omega))\cap C([0,T],H^2(\Omega)) }\leq C \rho^{-N+2 } \tx^{ 2N+2} h^{- N-2} 
  \end{align*}
	for some constant $C>0$ depending only on $\Omega$ and $T$. 
\end{proposition}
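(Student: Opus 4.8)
The plan is to verify the ansatz directly: the phase $\Phi$ and the amplitudes $a_k$ are already constructed so that the product $e^{i\Phi}\sum_k \rho^{-k}a_k$ solves the Schr\"odinger equation up to an error of size $\rho^{-N}e^{i\Phi}(i\p_t+\Delta+q)a_N$, which is precisely the source term in \eqref{CON:r}; hence the whole task reduces to (i) establishing the amplitude bounds \eqref{EST:ak} and (ii) solving \eqref{CON:r} for $r$ with the claimed estimates. For (i), I would argue by induction on $k$. The base case $a_0=\theta_h(t)e^{i(t\tau+x\cdot\xi)}$ is handled by Leibniz' rule: each time derivative produces either a factor $\tau$ or a derivative of $\theta_h$ (bounded by $C_j h^{-j}$ via \eqref{EST:theta}), and each space derivative produces a factor $\xi$; collecting the worst combination of $m$ derivatives gives $\|a_0\|_{H^m(Q)}\le C\tx^m h^{-m}$. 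For the inductive step, \eqref{eqn:a_k_GO_4} writes $a_k$ as a one-dimensional integral along $\omega$ of $(i\p_t+\Delta+q)a_{k-1}$; applying the operator costs two extra derivatives (a factor $\tx^2$ and an extra $h^{-1}$ from the time derivatives, using that $q$ is smooth and the domain bounded so integration along $\omega$ over a bounded segment is $H^m$-bounded), which upgrades the bound from $\tx^{2(k-1)+m}h^{-(k-1)-m}$ to $\tx^{2k+m}h^{-k-m}$, as claimed.

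For (ii), the source term $g:=-\rho^{-N}e^{i\Phi}(i\p_t+\Delta+q)a_N$ must be estimated in $H^m(Q)$ and in $\HLT$, and then the well-posedness result for the linear inhomogeneous Dirichlet problem (Lemma~4 of \cite{LOST2022}, already invoked in Proposition~\ref{Prop:linear forward}, together with the $C^1([0,T],L^2)\cap C([0,T],H^2)$ estimate of [\cite{KianSoccorsi}, Lemma 2.3]) is applied. The factor $e^{i\Phi}$ with $\Phi=\rho(x\cdot\omega-\rho|\omega|^2 t)$ is smooth and, when $m$ derivatives land on it, produces at most a factor $\rho^{2m}$ (each derivative in $x$ gives $\rho|\omega|$, each in $t$ gives $\rho^2|\omega|^2$); combined with $\|(i\p_t+\Delta+q)a_N\|_{H^m(Q)}\le C\tx^{2N+m+2}h^{-(N+m+1)}$ (which follows from \eqref{EST:ak} with $k=N$ after applying one more second-order operator) this yields $\|g\|_{H^m(Q)}\le C\rho^{-N+2m}\tx^{2N+m+2}h^{-(N+m+1)}$, hence \eqref{EST:r}. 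The $C^1 L^2\cap C H^2$ bound comes the same way from the $\HLT$-version, where only first-order derivatives of $e^{i\Phi}$ appear so the $\rho$-power drops to $\rho^{-N+2}$ and the $\tx$- and $h$-powers to $\tx^{2N+2}h^{-N-2}$. Finally I would note that $a_0$, and hence each $a_k$ by \eqref{eqn:a_k_GO_4}, vanishes for $t\in(0,h)$, so $u|_{t=0}=r|_{t=0}=0$ is consistent with \eqref{CON:r} (and symmetrically one can arrange $u|_{t=T}=0$ by using the backward problem and $\theta_h$ supported away from $t=T^*$).

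The main obstacle is the careful bookkeeping of the three competing parameters $\rho$, $\tx$ and $h^{-1}$ through the recursion \eqref{eqn:a_k_GO_4}: one has to confirm that each application of $(i\p_t+\Delta+q)$ raises the $h$-power by exactly one and the $\tx$-power by exactly two uniformly in $k$, which in turn requires that the smooth coefficient $q$ and its derivatives be bounded on $\overline Q$ and that the integration along $\omega$ in \eqref{eqn:a_k_GO_4} stays within a fixed bounded region — both of which hold since $\Omega$ is bounded. Once the amplitude estimate \eqref{EST:ak} is in hand, propagating it to $g$ and then to $r$ is routine, using only the already-cited linear solvability and energy estimates.
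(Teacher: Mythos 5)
Your proposal is correct and follows essentially the same route as the paper: an induction on $k$ for \eqref{EST:ak} in which each application of $(i\p_t+\Delta+q)$ through \eqref{eqn:a_k_GO_4} costs exactly one extra factor of $\tx^{2}$ and $h^{-1}$, followed by solving \eqref{CON:r} via the linear solvability of Lemma~4 in \cite{LOST2022} (with the factor $e^{i\Phi}$ contributing at most $\rho^{2m}$ in $H^m$) and the $C^1([0,T],L^2)\cap C([0,T],H^2)$ bound from Lemma~2.3 of \cite{KianSoccorsi} (where only one time derivative of $e^{i\Phi}$, hence $\rho^{2}$, enters). The parameter bookkeeping you carry out matches the paper's estimates, so no substantive difference to report.
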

\begin{proof}
We show the proof for the case with zero initial condition. The case with zero final condition at $T$ can be justified similarly. 
For $k=0$, the estimate \eqref{EST:ak} clearly holds for $m=0$. 
For $m=1$, it is easy to check that $\|\nabla a_0\|_{L^2(Q)}\leq C|\xi|$ and $\|\p_t a_0\|_{L^2(Q)}\leq C(|\tau|+|\xi|+h^{-1})$ and, therefore, when $h$ is small,
    $$
    \|a_0\|_{H^1(Q)}\leq C\langle \tau,\xi\rangle h^{-1}.
    $$
    Similarly, we can also deduce the bound for $\|a_0\|_{H^m(Q)}$.
    By induction, assuming that $a_{k-1}$ satisfies 
    $$
    \|a_{k-1}\|_{H^{m}(Q)}\leq C \tx^{2k+m-2} h^{-k-m+1 }.
    $$
    From \eqref{eqn:a_k_GO_4}, since we take $x$-derivative twice and $t$-derivative on $a_{k-1}$, the estimate of $\|a_k\|_{H^{m}(Q)}$ will receive extra $\tx^2$ and $h^{-1}$ on top of $\|a_{k-1}\|_{H^{m}(Q)}$. This leads to \eqref{EST:ak}. Note that \eqref{EST:ak} holds for all integer $m\geq 0$.

	Now we discuss the existence and estimates of $r$ to the problem \eqref{CON:r}.
   From Proposition~3 and Lemma~4 in \cite{LOST2022}, since $e^{i\Phi}(i\p_t+\Delta+q)a_N\in \mathcal{H}^{m}_0$ with even integer $m>\frac{n+1}2$, there exists a solution $r$ to \eqref{CON:r} so that
	$$
	    \|r\|_{H^m(Q)}\leq C \rho^{-N} \|e^{i\Phi}(i\p_t+\Delta+q)a_N\|_{H^m(Q)}\leq C\rho^{-N+2m}\langle \tau,\xi\rangle^{2N+m+2}h^{-(N+m+1)}.
	$$
	In addition, from Lemma~2.3 in \cite{KianSoccorsi}, one can also derive
	\begin{align*}
    \|r\|_{C^1([0,T],L^2(\Omega))\cap C([0,T],H^2(\Omega)) }
   \leq  & \,C \rho^{-N} \|e^{i\Phi}(i\p_t+\Delta+q)a_N\|_{H^1(0,T,L^2(\Omega))}\\
   \leq &\,C \rho^{-N} \rho^2 ( \| a_N  \|_{H^2(0,T,L^2(\Omega))}+\| a_N  \|_{H^1(0,T,H^2(\Omega))}    )\\
      \leq &\, C \rho^{-N+2 } \tx^{ 2N+2} h^{- N-2}.
   \end{align*}
\end{proof}

\begin{remark}\label{rk:GO}
	The choice of $a_0$ is quite flexible as long as $\omega\cdot\nabla a_0=0$ is fulfilled. This flexibility is essential in the reconstruction of the unknown coefficient $\beta$ since it will help eliminate the unwanted terms in the integral identity in order to obtain the Fourier transform of $\beta$, see Section~\ref{sec:theorem 1.2} for more detailed computations and explanations.
 
    For our purpose, we will also need the GO solution with a simple choice $a_0(t,x)=\theta_h(t)$ where $\theta_h(t)$ is given by \eqref{eqn:theta_h}.  
	That is, there exist GO solutions to the Schr\"odinger equation  $(i\p_t + \Delta + q)u=0$ in $Q$ of the form
\[
	u(t,x) = e^{i\Phi(t,x)}\LC \theta_h(t)+ \sum^{N}_{k=1}\rho^{-k}a_k(t,x)\RC + r(t,x),
\]
satisfying the initial condition $u|_{t=0}=0$ in $\Omega$ (or the final condition $u|_{t=T}=0$ in $\Omega$).
From \eqref{CON:a_k}, we obtain $\omega\cdot\nabla a_1=-\frac12\p_t \theta_h(t)$, implying $$a_1(t,x)=-\frac1{2|\omega|^2}\p_t\theta_h(t)x\cdot \omega$$
with $a_1(t,x)=0$ on $\omega^\perp$.
The rest of $a_k\in H^{m}(Q)$ ($k=2,\ldots, N$) are given by \eqref{eqn:a_k_GO_4} and one can verify
	 \begin{align}\label{EST:ak 1}
	 \|a_k\|_{H^{m}(Q)}\leq C h^{-k-m},~~0\leq k\leq N 
	 \end{align}
	 and
	\begin{align}\label{EST:r 1}
	\|r\|_{H^m(Q)}\leq C\rho^{-N + 2m} h^{-(N+m+1)},
	\qquad
    \|r\|_{C^1([0,T],L^2(\Omega))\cap C([0,T],H^2(\Omega)) }\leq C \rho^{-N+2 }  h^{- N-2}
  \end{align}
	for some constant $C>0$ depending only on $\Omega$ and $T$.
	Note that under this construction $a_k$ ($k=0,\cdots, N$) all vanish on $((0,h)\cup(T-h,T))\times\Omega$.
\end{remark}

\subsection{Unique continuation property (UCP)}
Recall that $\mathcal{O}\subset\Omega$ is an open neighborhood of $\p\Omega$. Let $\mathcal{O}_j$ ($j=1,2,3$) denote the open subsets of $\mathcal{O}$ such that
$ 
\overline{\mathcal{O}}_{j+1} \subset \mathcal{O}_j,~~\overline{\mathcal{O}}_{j  }\subset \mathcal{O}. 
$ 
Set $ \Omega_j:=\Omega\setminus \overline{\mathcal{O}}_{j}$ and $Q_j:=(0,T)\times \Omega_j.$ We will need the following lemma of UCP and its corollary for the linear Schr\"odinger equation. The lemma follows directly from \cite{Bella-Fraj2020} by setting the magnetic potential to be zero. 
\begin{lemma}[Unique continuation property]\label{lemma:UCP} 
Suppose that $q\in \mathcal M_{\mathcal O}$. Let $\tilde{w}\in H^{1,2} (Q)$ be  a solution to the following system
\begin{align}\label{UCP linear W}
	\left\{\begin{array}{rcll}
		(i \p_t +\Delta + q)\tilde{w}&=&  g_0 &\quad \hbox{in }Q,\\
		\tilde{w} &=& 0&\quad \hbox{on }\Sigma,\\
		\tilde{w} &=&0 &\quad \hbox{on }\{0\}\times \Omega,\\
	\end{array} \right. 
\end{align} 
where $g_0\in L^2(Q)$ and $\mathrm{supp} (g_0)\subset (0,T)\times (\Omega\setminus \mathcal{O})$. Then for any $T^{*}\in (0,T)$, there exist $C>0, \gamma^{*}>0, m_1>0,\mu_1<1$ such that the following estimate holds
$$
\|\tilde{w}\|_{L^2((0,T^{*})\times (\Omega_3\setminus \Omega_2))}\leq C\LC\gamma^{-\mu_1} \|\tilde{w}\|_{H^{1,1}(Q)}+e^{m_1\gamma} \|\p_{\nu} \tilde{w}\|_{L^2(\Sigma^{\sharp})} \RC,
$$
for any $\gamma> \gamma^{*}$. Here the constants $C,\, m_1$ and $\mu_1$  depend on $\Omega,\, \mathcal{O},\,T^{*}$ and $T.$  
\end{lemma}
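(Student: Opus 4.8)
The plan is to invoke directly the quantitative unique continuation estimate for the dynamical Schr\"odinger equation with an electromagnetic potential established in \cite{Bella-Fraj2020}, specialized to the case where the magnetic potential vanishes and only the electric potential $q$ is present. First I would check that the hypotheses match: in \cite{Bella-Fraj2020} the unknown solves $(i\p_t+\Delta+A\cdot\nabla+V)\tilde w=g_0$ with bounded coefficients supported (together with $g_0$) away from a neighborhood of the part of the boundary that is \emph{not} observed, and the conclusion is an interior estimate on an annular region $\Omega_3\setminus\Omega_2$ in terms of a negative power of the large parameter $\gamma$ times a Sobolev norm of $\tilde w$ plus an exponentially growing factor $e^{m_1\gamma}$ times the partial Neumann data $\|\p_\nu\tilde w\|_{L^2(\Sigma^\sharp)}$. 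Setting $A\equiv 0$ and $V=q\in\mathcal M_{\mathcal O}$ (so $q$ is smooth, bounded by $m_0$, and vanishes on $(0,T)\times\mathcal O$, in particular $\supp(g_0)\subset(0,T)\times(\Omega\setminus\mathcal O)$ is compatible with the geometric requirement there), the cited result yields exactly the claimed inequality, with the constants $C,\gamma^*,m_1$ and the exponent $\mu_1<1$ depending only on $\Omega,\mathcal O,T^*,T$.

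The key steps, in order, are: (i) recall the Carleman estimate of \cite{Bella-Fraj2020} with a weight that is pseudoconvex with respect to the Schr\"odinger operator and degenerates near $\Gamma^c$, absorbing the zeroth-order term $q\tilde w$ into the right-hand side using $q\in L^\infty$; (ii) split the large-parameter estimate into a low-frequency part controlled by the Carleman weight (producing the $\gamma^{-\mu_1}$ gain after optimizing in the Carleman parameter) and a boundary part on $\Sigma^\sharp$ (producing the $e^{m_1\gamma}$ factor); (iii) propagate the smallness from $\Sigma^\sharp$ into the interior annulus $\Omega_3\setminus\Omega_2$ via the chain of nested neighborhoods $\overline{\mathcal O}_{j+1}\subset\mathcal O_j\subset\mathcal O$, which is precisely the geometric setup under which the interpolation/propagation argument of \cite{Bella-Fraj2020} applies; (iv) restrict the time interval to $(0,T^*)$ using the cutoff $\theta_h$-type localization already built into the statement, so that the estimate holds uniformly for $T^*\in(0,T)$. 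Since the regularity class $H^{1,2}(Q)$ for $\tilde w$ and the source class $g_0\in L^2(Q)$ are exactly those required in \cite{Bella-Fraj2020}, no additional smoothing or approximation is needed beyond a density argument if one wants to start from rougher data.

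The main obstacle is not in the proof itself—which is essentially a citation—but in verifying that the geometric hypotheses on $\Gamma$, $\Gamma^c$, and the neighborhoods $\mathcal O_j$ in our setting coincide with those of \cite{Bella-Fraj2020}; in particular one must confirm that the convexity of $\Omega$ (assumed throughout the paper) together with the fact that $\Gamma$ is an open proper subset of $\p\Omega$ ensures the existence of an admissible pseudoconvex weight function vanishing to the appropriate order on $\Gamma^c$ and that the "observed" region $\Sigma^\sharp=(0,T)\times\Gamma$ contains the portion of the boundary through which the Carleman weight forces the boundary term to concentrate. Once this matching of geometric data is checked, the estimate follows verbatim by setting the magnetic potential to zero in \cite[main UCP estimate]{Bella-Fraj2020}.
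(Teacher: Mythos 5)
Your proposal matches the paper's treatment: the paper proves this lemma precisely by citing \cite{Bella-Fraj2020} and setting the magnetic potential to zero, which is exactly your main step, and your additional sketch of the Carleman/propagation-of-smallness machinery is consistent with what that reference does. (One small inaccuracy in your step (iv): the cutoff $\theta_h$ belongs to the geometric optics construction elsewhere in the paper, not to the statement of this lemma, but this does not affect the citation argument.)
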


\begin{corollary}
Let $q\in \mathcal M_{\mathcal O}$, and $\tilde{w}\in H^{1,2}(Q)$ a solution of \eqref{UCP linear W} where $g_0\in L^2(Q)$ and   $\mathrm{supp} (g_0)\subset (0,T)\times (\Omega\setminus \mathcal{O})$ such that $\p_{\nu} \tilde{w}=0$ on $\Sigma^{\sharp}$. Then $\tilde{w}=0$ in $(0,T)\times (\Omega_3\setminus \Omega_2)$. 
\end{corollary}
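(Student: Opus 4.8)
The plan is to read this off directly from Lemma~\ref{lemma:UCP}, which does all the real work. First I would apply the lemma to the given solution $\tilde w$: this is legitimate since $\tilde w \in H^{1,2}(Q)\subset H^{1,1}(Q)$ and $g_0\in L^2(Q)$ has support in $(0,T)\times(\Omega\setminus\mathcal O)$, exactly the hypotheses of the lemma. Fixing $T^*\in(0,T)$, the lemma gives, for every $\gamma>\gamma^*$,
\[
\|\tilde w\|_{L^2((0,T^*)\times(\Omega_3\setminus\Omega_2))} \le C\LC \gamma^{-\mu_1}\|\tilde w\|_{H^{1,1}(Q)} + e^{m_1\gamma}\|\p_\nu \tilde w\|_{L^2(\Sigma^{\sharp})}\RC .
\]
By hypothesis $\p_\nu\tilde w=0$ on $\Sigma^{\sharp}$, so the second term vanishes identically and we are left with $\|\tilde w\|_{L^2((0,T^*)\times(\Omega_3\setminus\Omega_2))}\le C\gamma^{-\mu_1}\|\tilde w\|_{H^{1,1}(Q)}$ for all $\gamma>\gamma^*$.

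Next I would send $\gamma\to\infty$. Here the point to check is that the right-hand side is a fixed finite quantity times $\gamma^{-\mu_1}$: the constants $C$, $\mu_1$ are independent of $\gamma$, one has $\mu_1>0$, and $\|\tilde w\|_{H^{1,1}(Q)}<\infty$ because $\tilde w\in H^{1,2}(Q)$. Hence the right-hand side tends to $0$, forcing $\|\tilde w\|_{L^2((0,T^*)\times(\Omega_3\setminus\Omega_2))}=0$, i.e. $\tilde w=0$ in $(0,T^*)\times(\Omega_3\setminus\Omega_2)$.

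Finally, since $T^*\in(0,T)$ was arbitrary and $\bigcup_{0<T^*<T}(0,T^*)=(0,T)$, taking a sequence $T^*\uparrow T$ yields $\tilde w=0$ in $(0,T)\times(\Omega_3\setminus\Omega_2)$, as claimed. There is essentially no genuine obstacle in the corollary itself: the substance is entirely contained in the quantitative unique continuation estimate of Lemma~\ref{lemma:UCP} (inherited from \cite{Bella-Fraj2020}), and the only items requiring a word of justification are the uniform-in-$\gamma$ bound on the right-hand side — which is precisely why the regularity hypothesis $\tilde w\in H^{1,2}(Q)$ is imposed — and the trivial exhaustion in the time variable.
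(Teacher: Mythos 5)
Your proof is correct and is exactly the argument the paper intends (the corollary is stated without proof as an immediate consequence of Lemma~\ref{lemma:UCP}): set $\p_\nu\tilde w=0$ in the estimate, let $\gamma\to\infty$ using $0<\mu_1<1$ and $\|\tilde w\|_{H^{1,1}(Q)}<\infty$, and exhaust in $T^*$. Your reading of $\mu_1$ as positive is the intended one (the lemma only writes $\mu_1<1$, but positivity is inherited from \cite{Bella-Fraj2020} and is used later in the stability proof), so there is nothing to add.
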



\subsection{The integral identity}
In this section, we derive the needed integral identity to prove the stability estimate in Theorem \ref{thm:stability}. We denote $$Q^*:=(0,T^*)\times\Omega\quad  \hbox{ for $0<T^*<T$}.$$
Recall the notation $u_{\ell,\varepsilon f}$ ($\ell=1,2$) that denotes the small unique solution to the initial boundary value problem
\begin{align*} 
	\left\{\begin{array}{rcll}
		(i \p_t +\Delta +q)u_{\ell,\varepsilon f} + \beta_\ell u_{\ell,\varepsilon f}^2 &=& 0 &\quad \hbox{in }Q^*,\\
		u_{\ell,\varepsilon f} &=&\varepsilon_1f_1+\varepsilon_2f_2 &\quad \hbox{on }\Sigma,\\
		u_{\ell,\varepsilon f} &=&0 &\quad \hbox{on }\{0\}\times \Omega,\\
	\end{array} \right. 
\end{align*} 
where $f_1, f_2\in  H^{2\kappa+\frac32, 2\kappa+\frac32}(\Sigma)$ and $|\varepsilon|:=|\varepsilon_1|+|\varepsilon_2|$ is sufficiently small such that $\varepsilon_1f_1+\varepsilon_2f_2\in\mathcal S_\lambda(\Sigma)$. Also, let $U_{j}$ and $W_{\ell, (1,1)}$ be the solutions to the equations \eqref{IBVP linear U} and \eqref{IBVP linear W}, respectively.
In addition, let $U_0$ be the solution of the adjoint problem, 
\begin{align*} 
	\left\{\begin{array}{rcll}
		(i \p_t +\Delta + q)U_0 &=& 0 &\quad \hbox{in }Q^*,\\
		U_0&=& f_0 &\quad \hbox{on }\Sigma,\\
		U_0 &=&0 &\quad \hbox{on }\{T^*\}\times \Omega\\
	\end{array} \right. 
\end{align*} 
for some $f_0\in H^{2\kappa+\frac32, 2\kappa+\frac32}(\Sigma)$ to be specified later. 
We also introduce a smooth cut-off function $\chi\in C^\infty(\overline{\Omega})$ satisfying $0\leq \chi\leq 1$ and  
\begin{align*}
	\chi(x) =\left\{  
	\begin{array}{ll}
		0 & \hbox{in }\mathcal{O}_3,\\
		1 & \hbox{in }\overline\Omega\setminus \mathcal{O}_2,
	\end{array} 
	\right.
\end{align*}
and denote 
$
 W:=W_{2,(1,1)}-W_{1,(1,1)}
$,
which solves
	\begin{align}\label{IBVP linear tilde W}
		\left\{\begin{array}{rcll}
			(i \p_t +\Delta + q)W&=& 2\beta U_1U_2 &\quad \hbox{in }Q^*,\\
			W &=& 0&\quad \hbox{on }\Sigma,\\
			W &=&0 &\quad \hbox{on }\{0\}\times \Omega,\\
		\end{array} \right. 
	\end{align} 
where $\beta=\beta_1-\beta_2$.
As we will see below, by applying this cut-off function $\chi$ to $W$, whose Neumann data is not necessary zero, we have a control of the energy near the boundary using UCP.
First, we obtain the following key integral identity.
\begin{lemma} 
\label{lemma:identity}
Suppose that $\beta :=\beta_1-\beta_2\in \mathcal{M}_{\mathcal{O}}$.  
Let $U_j$ and $W$ be as above.
	Then
	\begin{align}\label{Integral identity}
	\int_{Q^*} 2\beta U_1U_2\overline{U}_0 + [\Delta,\chi]W\overline{U}_0\,dxdt= 0, 
	\end{align}
where $[\Delta, \chi]:=\Delta \chi -\chi \Delta$ is the commutator bracket.
\end{lemma}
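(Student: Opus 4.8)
The plan is to test the equation \eqref{IBVP linear tilde W} for $W$ against the conjugate of the adjoint solution $\overline{U}_0$, but first replacing $W$ by $\chi W$ so that the boundary term at $\Gamma^c$ (where we have no data and $\partial_\nu W$ need not vanish) is killed by the cutoff. Concretely, I would start from the quantity $\int_{Q^*}(i\partial_t+\Delta+q)(\chi W)\,\overline{U}_0\,dxdt$ and integrate by parts in both $t$ and $x$, moving the operator onto $\overline{U}_0$. Since $U_0$ solves $(i\partial_t+\Delta+q)U_0=0$ (so its conjugate is annihilated by the formal adjoint $-i\partial_t+\Delta+q$, with the sign of the time derivative absorbed by the boundary terms at $t=0,T^*$), the volume term involving $\overline{U}_0$ and the bulk operator vanishes. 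The spatial integration by parts produces only boundary integrals over $\Sigma^*:=(0,T^*)\times\partial\Omega$, and the time integration by parts produces terms at $t=0$ and $t=T^*$.

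Next I would check that all these boundary and endpoint contributions vanish. On $\Sigma^*$: since $\chi$ equals $0$ in a neighborhood $\mathcal{O}_3$ of $\partial\Omega$, both $\chi W$ and $\partial_\nu(\chi W)$ vanish on $\partial\Omega$ (indeed $\chi W\equiv 0$ near $\partial\Omega$), so every boundary term is zero regardless of what $\partial_\nu W$ or $W$ do there — this is exactly the point of inserting $\chi$. At $t=0$: $W=0$ on $\{0\}\times\Omega$ by \eqref{IBVP linear tilde W}, hence $\chi W=0$ there. At $t=T^*$: $U_0=0$ on $\{T^*\}\times\Omega$ by its defining adjoint problem, so the endpoint term there vanishes too. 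Therefore $\int_{Q^*}(i\partial_t+\Delta+q)(\chi W)\,\overline{U}_0\,dxdt=0$.

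Finally I would expand the left side using the Leibniz rule: $(i\partial_t+\Delta+q)(\chi W)=\chi(i\partial_t+\Delta+q)W+[\Delta,\chi]W = 2\beta\chi U_1U_2+[\Delta,\chi]W$, where I used that $\chi$ is time-independent and that $W$ solves \eqref{IBVP linear tilde W}. Now, since $\beta\in\mathcal{M}_{\mathcal{O}}$ vanishes on $(0,T)\times\mathcal{O}$ and $\chi\equiv 1$ on $\overline\Omega\setminus\mathcal{O}_2\supset \mathrm{supp}(\beta|_\Omega)$ (because $\mathcal{O}_2\subset\mathcal{O}$), we have $\beta\chi=\beta$ identically on $\Omega$. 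This yields $\int_{Q^*}\big(2\beta U_1U_2+[\Delta,\chi]W\big)\overline{U}_0\,dxdt=0$, which is precisely \eqref{Integral identity}.

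The only step requiring care is the integration by parts in time together with the $i\partial_t$ term: one must track the sign so that the endpoint contributions are exactly the ones controlled by $W|_{t=0}=0$ and $U_0|_{t=T^*}=0$, and confirm that no interior time-boundary term survives. The spatial part is the standard Green's identity and, thanks to $\chi$ vanishing near $\partial\Omega$, is entirely routine here; the genuine analytic content (using UCP to bound $[\Delta,\chi]W$, which is supported in the shell $\mathcal{O}_3\setminus\mathcal{O}_2$) is deferred to the subsequent estimates and not needed for this lemma.
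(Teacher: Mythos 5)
Your proposal is correct and matches the paper's own argument: both define $W^*=\chi W$, use that $\chi$ vanishes near $\partial\Omega$ so that $W^*$ and $\partial_\nu W^*$ vanish on $\Sigma$, kill the endpoint terms via $W|_{t=0}=0$ and $U_0|_{t=T^*}=0$, and conclude via the Leibniz identity $(i\partial_t+\Delta+q)(\chi W)=2\beta\chi U_1U_2+[\Delta,\chi]W$ together with $\chi\beta=\beta$. No gaps beyond those already implicit in the paper's treatment.
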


\begin{proof}
Let $W^*(t,x):=\chi(x) W(t,x)$. Note that since $\beta_1-\beta_2=0$ in $[0,T]\times \mathcal{O}$ and $\chi=1$ in $\overline\Omega\setminus \mathcal{O}$ (a subset of $\overline\Omega\setminus \mathcal{O}_2$), 
we have
$$
    \chi(\beta_1-\beta_2)=\beta_1-\beta_2\quad\hbox{ in }Q.
$$ 
This implies that the function $W^*$ satisfies
\begin{align*} 
	\left\{\begin{array}{rcll}
		(i \p_t +\Delta + q)W^*&=& 2\beta U_1U_2 + [\Delta,\chi]W &\quad \hbox{in }Q^*,\\
		W^* &=& 0&\quad \hbox{on }\Sigma,\\
		W^* &=&0 &\quad \hbox{on }\{0\}\times \Omega.\\
	\end{array} \right. 
\end{align*} 
In particular, we have
$$
    W^*|_{\Sigma}=\p_\nu W^*|_{\Sigma}=0.
$$
We multiply the first equation in \eqref{IBVP linear W} by $\overline{U}_0$ and then integrate over $Q^*$.  Using the condition $\overline{U}_0|_{t=T^*}=W|_{t=0}=0$,  we finally obtain
\begin{align*}
	\int_{Q^*} 2 \beta U_1U_2 \overline{U}_0 +[\Delta,\chi]W\overline{U}_0\,dxdt
	=\int_\Sigma (\overline{U}_0\p_\nu W^* - W^*\p_\nu\overline{U}_0)\,d\sigma(x) dt=0.
\end{align*}
\end{proof}

\subsection{Proof of the stability estimate (Theorem \ref{thm:stability})}\label{sec:theorem 1.2}
Below we derive a series of estimates to prove the final stability result in Theorem \ref{thm:stability}.
We choose to plug in GO solutions $U_j$, $j=0,1,2$ as in Proposition \ref{prop:GO existence} and Remark \ref{rk:GO}. Specifically, we take 
	\begin{align*} 
		U_j(t,x) :=v_j(t,x)+r_j(t,x)= e^{i\Phi_j(t,x)} \LC a_0^{(j)} + \sum^{N}_{k=1}\rho^{-k}a_{k}^{(j)}(t,x)\RC + r_j(t,x),\qquad j=0,1,2,
	\end{align*}
where the phase function $\Phi_j$ are of the form
	\begin{align*} 
	    \Phi_j(t,x) = \rho\LC x\cdot\omega_j-\rho|\omega_j|^2 t \RC
	\end{align*}
	with the vectors $\omega_1,\,\omega_2$ and $\omega_0$ satisfying
    \begin{align}\label{CON_omega}
    	\omega_1+\omega_2=\omega_0,\quad |\omega_1|^2+|\omega_2|^2 = |\omega_0|^2.
    \end{align}
The leading amplitudes $a^{(j)}_0$ are given by 
\begin{align*} 
	    a^{(1)}_0(t,x) =a^{(2)}_0(t,x)=\theta_h(t),\quad a^{(0)}_0(t,x)=\theta_h(t) e^{i(\tau t+x\cdot\xi)},
	\end{align*}
where $\tau\in \R$ and $\xi\in \omega^\perp_0$.

Substituting $U_j=v_j+r_j$ ($j=0,1,2$) into the first term on the left-hand side of the identity \eqref{Integral identity}, we get 
\begin{align}\label{integral U1U2U0}
  &\int_{Q^*} 2\beta U_1U_2\overline{U}_0 \,dx dt = \int_{Q^*} 2\beta  v_1v_2 \overline{v}_0 \,dx dt + R_1+R_2+R_3,
\end{align}
where the remainder terms are grouped into
$$
    R_1:=2\int_{Q^*} \beta (\overline{r}_0v_1v_2+r_1v_2\overline{v}_0+r_2v_1\overline{v}_0 ) \,dx dt,
$$
$$
    R_2:=2\int_{Q^*} \beta  (\overline{r}_0r_1 v_2+\overline{r}_0 r_2 v_1+r_1r_2\overline{v}_0 ) \,dx dt,
$$
$$
    R_3:= 2\int_{Q^*} \beta  r_1r_2  \overline{r}_0  \,dx dt.
$$
We have the following asymptotics.
\begin{lemma}\label{lemma:v1v2v0} 
Let $m>{n+1\over 2}$.
There exist $\rho_0>1$ and $1>h_0>0$ such that for $\rho>\rho_0$ and $0<h<h_0$ such that 
\begin{align}\label{integral v1v2v0}
   2 \int_{Q^*} \beta  v_1v_2 \overline{v}_0 \,dx dt = 2\int_{Q^*} \beta  a^{(1)}_0a^{(2)}_0  \overline{a}^{(0)}_0 \,dx dt+I,
\end{align} 
where 
\begin{align*}
     |I|\leq C \LC\rho^{-1} \tx^{2N}h^{-N} + \rho^{-2} \tx^{2N}h^{-2N}+ \rho^{-3} \tx^{2N+m}h^{-3N-3m}\RC,
\end{align*}
for any $\tau\in\R$ and $\xi\in \omega_0^\perp$. Here the positive constant $C$ depends on $Q^*,\, N$, and $\beta$.
\end{lemma}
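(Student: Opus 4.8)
The plan is to substitute the explicit GO amplitudes $v_j = e^{i\Phi_j}\bigl(a_0^{(j)} + \sum_{k=1}^N \rho^{-k}a_k^{(j)}\bigr)$ into the integral $2\int_{Q^*}\beta\, v_1 v_2\overline v_0\,dxdt$ and expand. First I would note that the phase functions combine nicely: by the construction $\Phi_j(t,x) = \rho(x\cdot\omega_j - \rho|\omega_j|^2 t)$ together with the constraint \eqref{CON_omega}, namely $\omega_1+\omega_2=\omega_0$ and $|\omega_1|^2+|\omega_2|^2=|\omega_0|^2$, the total phase $\Phi_1+\Phi_2-\overline\Phi_0$ vanishes identically. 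Thus no oscillatory factor in $\rho$ survives the product, and the integral is a genuine (non-oscillatory in $\rho$) expansion whose terms are ordered by powers of $\rho^{-1}$.

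Next I would isolate the leading term. Multiplying out the three factors $\bigl(a_0^{(1)}+\rho^{-1}a_1^{(1)}+\cdots\bigr)\bigl(a_0^{(2)}+\rho^{-1}a_1^{(2)}+\cdots\bigr)\bigl(\overline a_0^{(0)}+\rho^{-1}\overline a_1^{(0)}+\cdots\bigr)$, the $\rho^0$ term is exactly $a_0^{(1)}a_0^{(2)}\overline a_0^{(0)}$, which gives the main term on the right-hand side of \eqref{integral v1v2v0}. Everything else goes into the remainder $I$, which I would split according to the power of $\rho^{-1}$: the $O(\rho^{-1})$ contributions come from exactly one amplitude being replaced by its first correction (products of the form $a_1^{(j)} a_0^{(k)}\overline a_0^{(0)}$ and so on); the $O(\rho^{-2})$ contributions from two corrections; and the $O(\rho^{-3})$ and lower from three or more. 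To bound each piece I would use the Cauchy–Schwarz inequality in $L^2(Q^*)$ together with the amplitude estimates \eqref{EST:ak} for $a_k^{(0)}$, which carry the $\langle\tau,\xi\rangle$ weights, and \eqref{EST:ak 1} for $a_k^{(1)}, a_k^{(2)}$, which only carry powers of $h^{-1}$. Since $\beta\in C^\infty(\overline{Q^*})$ is bounded, the $O(\rho^{-1})$ block is controlled by $\rho^{-1}\langle\tau,\xi\rangle^{2N}h^{-N}$, the $O(\rho^{-2})$ block by $\rho^{-2}\langle\tau,\xi\rangle^{2N}h^{-2N}$, and the remaining terms, where I expect the highest-order corrections $a_N^{(j)}$ and the crudest $H^m$-type bounds to enter, by $\rho^{-3}\langle\tau,\xi\rangle^{2N+m}h^{-3N-3m}$; these are precisely the three terms in the stated bound on $|I|$.

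The main obstacle I anticipate is bookkeeping rather than a conceptual difficulty: one must keep careful track of which factor receives which correction and verify that the worst power of $h^{-1}$ and of $\langle\tau,\xi\rangle$ in each grouped block matches the claimed exponents, using that $a_k^{(1)}$ and $a_k^{(2)}$ contribute no $\langle\tau,\xi\rangle$ weight (because $a_0^{(1)}=a_0^{(2)}=\theta_h(t)$ is independent of $\xi$) while $a_k^{(0)}$ does. A secondary point requiring care is the threshold $\rho_0 > 1$ and $h_0\in(0,1)$: one needs $\rho$ large and $h$ small enough that the $\rho^{-2}$ and $\rho^{-3}$ blocks are genuinely dominated as stated and that the GO construction of Proposition \ref{prop:GO existence} and Remark \ref{rk:GO} is valid, which is exactly the regime $m>\frac{n+1}{2}$ assumed in the lemma.
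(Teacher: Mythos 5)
Your proposal matches the paper's proof essentially step for step: cancel the total phase via \eqref{CON_omega}, peel off the $a_0^{(1)}a_0^{(2)}\overline a_0^{(0)}$ term, group the remainder by how many of the three factors carry correction sums, and bound the blocks via H\"older with \eqref{EST:ak} (which carries the $\tx$ weights) and \eqref{EST:ak 1}, reserving the $H^m$ norms with $m>\frac{n+1}{2}$ for the triple-correction block, exactly as the paper does for its $I_1,I_2,I_3$. The approach and resulting exponents are correct; no gap.
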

\begin{proof}
By the definition of $v_j$, we have the identity
$$
2\int_{Q^*} \beta  v_1v_2 \overline{v}_0 \,dx dt = 2\int_{Q^*} \beta  a^{(1)}_0a^{(2)}_0 \overline{a}^{(0)}_0 \,dx dt+I_1+I_2+I_3,
$$
where we used the conditions \eqref{CON_omega} to get $\Phi_1+\Phi_2-\Phi_0=0$. 
Here the rest $O(\rho^{-1})$ terms are grouped into
$$
    I_1:=2\int_{Q^*} \beta  \left[ a^{(1)}_0a^{(2)}_0 \LC\sum^{N}_{k=1}\rho^{-k}\overline{a}^{(0)}_{ k}\RC + a^{(1)}_0\overline{a}^{(0)}_0  \LC\sum^{N}_{k=1}\rho^{-k}a_{k}^{(2)}\RC +a^{(2)}_0\overline{a}^{(0)}_0 \LC \sum^{N}_{k=1}\rho^{-k}a_{k}^{(1)}\RC\right] \,dx dt,
$$
\begin{align*}
    I_2&:=2\int_{Q^*} \beta  \Bigg[ a_0 ^{(1)}\LC \sum^{N}_{k=1}\rho^{-k}a^{(2)}_{ k}\RC\LC \sum^{N}_{k=1}\rho^{-k}\overline{a}^{(0)}_{ k}\RC +a^{(2)}_0 \LC \sum^{N}_{k=1}\rho^{-k}a^{(1)}_{ k} \RC\LC \sum^{N}_{k=1}\rho^{-k}\overline{a}^{(0)}_{ k}\RC \\
    &\quad + \overline{a}^{(0)}_0 \LC \sum^{N}_{k=1}\rho^{-k}a^{(1)}_{ k} \RC\LC \sum^{N}_{k=1}\rho^{-k}a^{(2)}_{ k} \RC \Bigg]\,dx dt,
\end{align*}
and
$$
    I_3:= 2\int_{Q^*} \beta \LC \sum^{N}_{k=1}\rho^{-k}a^{(1)}_{ k} \RC\LC \sum^{N}_{k=1}\rho^{-k}a^{(2)}_{ k} 
    \RC \LC\sum^{N}_{k=1}\rho^{-k}\overline{a}^{(0)}_{ k}   \RC \,dx dt.
$$

Let us estimate each $I_j$. To this end, it is sufficient to control the first term in each $I_j$ since the other terms can be handled similarly. 

The first term in $I_1$ is controlled by
\begin{align*}
   &  2 \left| \int_{Q^*} \beta  a^{(1)}_0a^{(2)}_0 \LC \sum^{N}_{k=1}\rho^{-k}\overline{a}^{(0)}_{ k} \RC\, dx dt \right|\\
    \leq &\,C \|\beta  \|_{L^{\infty}(Q^*)} \|a^{(1)}_0  \|_{L^{\infty}(Q^*)} \|a^{(2)}_0 \|_{L^{\infty}(Q^*)}
    \LC \sum^{N}_{k=1}\rho^{-k} \| \overline{a}^{(0)}_{ k} \|_{L^{2}(Q^*)}\RC \\
    \leq  &\, C \sum^{N}_{k=1}\rho^{-k} \|  \overline{a}^{(0)}_{ k} \|_{L^{2}(Q^*)}\leq C\rho^{-1}\|  \overline{a}^{(0)}_{N} \|_{L^{2}(Q^*)} 
    \leq C \rho^{-1} \tx^{2N}h^{-N},
\end{align*}
by \eqref{EST:ak} for sufficiently large $\rho>1$ and small $h<1$, where $C$ depending on $Q^*,\, N$ and $\beta$. 
Similarly, the second term and the third term are less than $C\rho^{-1} h^{-N}$ by applying \eqref{EST:ak 1} instead.
Combining these estimates together gives 
\begin{align}\label{EST:I1}
    |I_1|\leq C \rho^{-1} \tx^{2N}h^{-N}.
\end{align}
 
For $I_2$, applying H\"{o}lder's inequality, the first term is controlled by
\begin{align*} 
    &2\left| \int_{Q^*} \beta  a^{(1)}_0\LC \sum^{N}_{k=1}\rho^{-k}a^{(2)}_{ k}\RC\LC \sum^{N}_{k=1}\rho^{-k}\overline{a}^{(0)}_{ k}\RC\, dx dt \right|\notag\\
    \leq & \, C \|\beta  \|_{L^{\infty}(Q^*)} \|a^{(1)}_0  \|_{L^{\infty}(Q^*)} \LC \sum^{N}_{k=1}\rho^{-k}\| a^{(2)}_{ k} \|_{L^{2}(Q^*)}\RC \LC\sum^{N}_{k=1}\rho^{-k} \|  \overline{a}^{(0)}_{ k} \|_{L^{2}(Q^*)}\RC \notag\\
    \leq &\, C\LC\sum^{N}_{k=1}\rho^{-k} h^{-k} \RC\LC \sum^{N}_{k=1}\rho^{-k}\tx^{2k} h^{-k}\RC\notag\\
    \leq &\, C \rho^{-2} \tx^{2N}h^{-2N},
\end{align*}
by using \eqref{EST:ak} and \eqref{EST:ak 1} again. Similarly, the second and the third terms share the same bound. Therefore we have
\begin{align}\label{EST:I2}
    |I_2|\leq C \rho^{-2} \tx^{2N}h^{-2N}.
\end{align}

Finally, since $m>{n+1\over 2}$, 
  we can control $I_3$ by
\begin{align}\label{EST:I3}
    |I_3|\leq &\, C \|\beta  \|_{L^{\infty}(Q^*)}\LC \sum^{N}_{k=1}\rho^{-k}\| a^{(1)}_{ k} \|_{H^{m}(Q^*)} \RC\LC \sum^{N}_{k=1}\rho^{-k}\|a^{(2)}_{ k} \|_{H^{m}(Q^*)}\RC \LC
    \sum^{N}_{k=1}\rho^{-k}\|\overline{a}^{(0)}_{ k} \|_{H^{m}(Q^*)}\RC \notag\\
    \leq &\, C \rho^{-3} \|a^{(1)}_N\|_{H^{m}(Q^*)} \|a^{(2)}_N\|_{H^{m}(Q^*)} \|a^{(0)}_N\|_{H^{m}(Q^*)} \notag\\
    \leq &\, C \rho^{-3} \tx^{2N+m}h^{-3N-3m}
 \end{align}
Combining \eqref{EST:I1}, \eqref{EST:I2}, and \eqref{EST:I3} completes the proof.
\end{proof}

\begin{lemma}\label{lemma:R terms} 
Let $m>{n+1\over 2}$. Then there exists $\rho_0>1$ and $0<h_0<1$ such that the three remainder terms satisfy the following estimates:
\begin{align*}
    |R_1|\leq C \rho^{-N+2m} \tx^{2N+m+2}h^{-3N-3m-1} ,
\end{align*}
\begin{align*}
    |R_2|\leq C \rho^{-2N+4m} \tx^{2N+m+2}h^{-3N-3m-2},  
\end{align*}
and
\begin{align*}
    |R_3|\leq C \rho^{-3N+6m} \tx^{2N+m+2}h^{-3N-3m-3}
\end{align*}
for $\rho>\rho_0$, $0<h<h_0$, $\tau\in\R$ and $\xi\in \omega_0^\perp$, where the positive constant $C$ depends on $Q^*,\, N$, and $\beta$.
\end{lemma}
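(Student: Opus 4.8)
The plan is to estimate each $R_j$ by direct application of Hölder's inequality, pairing the pointwise $L^\infty$ control of the geometric optics amplitudes $v_j$ against the $H^m$-estimates for the remainders $r_j$ from Proposition~\ref{prop:GO existence} and Remark~\ref{rk:GO}, using the Sobolev embedding $H^m(Q^*)\hookrightarrow L^\infty(Q^*)$ valid since $m>\tfrac{n+1}{2}$. First I would record the two ingredients we need: (i) the amplitude bounds $\|v_j\|_{L^\infty(Q^*)}\leq \|v_j\|_{H^m(Q^*)}\leq C$ for $j=1,2$ (using \eqref{EST:ak 1}, since $a_0^{(1)}=a_0^{(2)}=\theta_h$ and the $a_k^{(j)}$ for $k\ge 1$ carry only negative powers of $\rho$), together with $\|v_0\|_{L^\infty(Q^*)}\leq C\tx^{m}h^{-m}$ from \eqref{EST:ak}; and (ii) the remainder bound $\|r_j\|_{H^m(Q^*)}\leq C\rho^{-N+2m}\tx^{2N+m+2}h^{-(N+m+1)}$ for $j=0,1,2$ from \eqref{EST:r} (the worst case, applied uniformly to all three since $r_1,r_2$ obey the better bound \eqref{EST:r 1}).

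For $R_1$, each of its three terms contains exactly one remainder factor and two amplitude factors; bounding the remainder in $L^\infty$ via Sobolev embedding and the amplitudes in $L^\infty$ as well, the worst term is $\overline r_0 v_1 v_2$, giving $|R_1|\leq C\|\beta\|_{L^\infty(Q^*)}\|r_0\|_{H^m(Q^*)}\|v_1\|_{L^\infty(Q^*)}\|v_2\|_{L^\infty(Q^*)}$. Plugging in the bounds yields the power count $\rho^{-N+2m}\tx^{2N+m+2}h^{-(N+m+1)}$; the stated exponent $h^{-3N-3m-1}$ is a (non-optimal) upper bound obtained by bounding $h^{-(N+m+1)}\le h^{-3N-3m-1}$ since $0<h<1$, which is the uniform form convenient for later use. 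For $R_2$, each term has two remainder factors and one amplitude: put one remainder in $L^\infty$, one in $L^2$ (or both via the algebra/embedding), and the single amplitude in $L^\infty$, producing $\rho^{2(-N+2m)}=\rho^{-2N+4m}$ and the $\tx$, $h$ exponents as claimed. For $R_3$, all three factors are remainders; using the Banach algebra structure of $H^m(Q^*)$ (since $m>\tfrac{n+1}{2}$) we bound $\|r_1 r_2 \overline r_0\|_{L^1(Q^*)}\le C\prod_{j}\|r_j\|_{H^m(Q^*)}$, giving $\rho^{3(-N+2m)}=\rho^{-3N+6m}$ and the corresponding $\tx^{2N+m+2}$ (again crudely consolidating the three factors of $\tx^{2N+m+2}$ into one, which suffices as an upper bound after we later fix $\rho$ large relative to $\tx$), with $h^{-3N-3m-3}$.

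The only mildly delicate point—hardly an obstacle—is keeping the bookkeeping of the $\tx$ and $h$ powers consistent: the remainders $r_0$ (built on the oscillatory amplitude $a_0^{(0)}=\theta_h e^{i(\tau t+x\cdot\xi)}$) carry $\tx$-growth while $r_1,r_2$ (built on $\theta_h$) do not, and one must verify that using the uniform bound \eqref{EST:r} for all three and then overestimating $h^{-(N+m+1)}$ by $h^{-3N-3m-j}$ in $R_j$ still yields the asserted exponents; this is immediate since $N,m\ge 1$ and $0<h<1$. Choosing $\rho_0>1$ large enough (and $h_0<1$) so that all the "$O(\rho^{-k})$" tails in the amplitude sums $\sum_{k\ge 1}\rho^{-k}a_k^{(j)}$ are genuinely dominated by their $k=1$ term completes the argument.
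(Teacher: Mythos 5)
Your overall strategy---pairing sup-norm/Sobolev-algebra bounds for the amplitudes against the $H^m$ bounds for the remainders---is the same as the paper's, but two of your bookkeeping claims are incorrect, and one of them actually breaks the stated estimate. First, the ingredient $\|v_j\|_{L^\infty(Q^*)}\le\|v_j\|_{H^m(Q^*)}\le C$ for $j=1,2$ is false: $v_j$ contains the factor $e^{i\Phi_j}$ with $\Phi_j=\rho(x\cdot\omega_j-\rho|\omega_j|^2t)$, so $\|v_j\|_{H^m(Q^*)}$ grows like $\rho^{2m}$, and even $\|v_j\|_{L^\infty}$, which equals the sup of the amplitude sum, is only $O\LC\sum_{k\ge1}\rho^{-k}h^{-k-m}\RC$ plus a constant; this cannot be made $O(1)$ by enlarging $\rho_0$, because $h$ is an independent parameter allowed to be arbitrarily small (the lemma must hold for all $\rho>\rho_0$ and all $0<h<h_0$ simultaneously). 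The same objection applies to $\|v_0\|_{L^\infty}\le C\tx^m h^{-m}$, which ignores the tails $\rho^{-k}\tx^{2k+m}h^{-k-m}$ with $\tau,\xi$ unconstrained. This first flaw happens to be harmless for the final exponents: replacing your constants by the correct crude bounds $\sum_{k}\rho^{-k}\|a^{(j)}_k\|_{H^m(Q^*)}\le Ch^{-N-m}$ for $j=1,2$ (by \eqref{EST:ak 1}) and $\le C\tx^{2N+m}h^{-N-m}$ for $j=0$ (by \eqref{EST:ak}) produces exactly the stated powers $h^{-3N-3m-j}$; this is in fact the paper's computation, so no ``padding'' of $h^{-(N+m+1)}$ up to $h^{-3N-3m-1}$ is involved.

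The genuine gap is your decision to apply the worst-case remainder estimate \eqref{EST:r} uniformly to $r_0,r_1,r_2$. Each use of \eqref{EST:r} brings a factor $\tx^{2N+m+2}$, so your bound for $R_2$ is proportional to $\tx^{2(2N+m+2)}$ and for $R_3$ to $\tx^{3(2N+m+2)}$, and already in the cross terms $r_1v_2\overline v_0$, $r_2v_1\overline v_0$ of $R_1$ you pick up $\tx^{2N+m+2}$ times the $\tx$-growth of $v_0$. These strictly exceed the claimed $\tx^{2N+m+2}$, and since $\tau\in\R$ and $\xi\in\omega_0^\perp$ are arbitrary, the excess powers of $\tx$ cannot be absorbed into the constant or traded against $\rho$ or $h$; your closing remark that the overestimates are ``immediate since $N,m\ge 1$ and $0<h<1$'' only addresses the $h$-powers. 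The fix is precisely the distinction you mention in passing and then discard: $r_1,r_2$ are built on $a_0^{(1)}=a_0^{(2)}=\theta_h$ and obey the $\tx$-free estimate \eqref{EST:r 1} (likewise \eqref{EST:ak 1} for their amplitudes), while only $r_0$, built on $\theta_h e^{i(\tau t+x\cdot\xi)}$, should be estimated by \eqref{EST:r}. With that assignment each term of $R_1,R_2,R_3$ carries at most one factor $\tx^{2N+m+2}$ and the lemma's exponents come out as stated, which is how the paper argues.
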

\begin{proof}
Again it is sufficient to evaluate the first term in each $R_j$. Substituting $v_1,\,v_2$, and $r_0$ into the first term of $R_1$,  
we get
\begin{align*}
     \int_{Q^*} \beta \overline{r}_0 v_1v_2 dx dt= \int_{Q^*} \beta \overline r_0 e^{i \Phi_0} \LC \sum^{N}_{k=0}\rho^{-k}a^{(1)}_{ k}\RC\LC\sum^{N}_{k=0}\rho^{-k}a^{(2)}_{ k}\RC
       dx dt.
\end{align*}
Since $H^m(Q)$ is an algebra, by \eqref{EST:r} and \eqref{EST:ak 1}, we have
\begin{align*}
     \LV\int_{Q^*} \beta \overline{r}_0 v_1v_2 dx dt\RV \leq &  C \|\beta  \|_{L^{\infty}(Q^*)}\|r_0\|_{H^{m}(Q^*)} \LC \sum^{N}_{k=0}\rho^{-k}\|a^{(1)}_{ k} \|_{H^{m}(Q^*)}\RC\LC
     \sum^{N}_{k=0}\rho^{-k} \|a^{(2)}_{ k} \|_{H^{m}(Q)}\RC \\
     \leq &\, C   \rho^{-N+2m} \tx^{2N+m+2} h^{-N-m-1}  \|a^{(1)}_N\|_{H^{m}(Q^*)} \|a^{(2)}_N\|_{H^{m}(Q^*)}\\
     \leq &\, C \rho^{-N+2m} \tx^{2N+m+2}h^{-3N-3m-1}  .
\end{align*} 
The rest terms in $R_1$ satisfy the same estimate similarly.  
The same argument also gives the corresponding bounds for $R_2$ and $R_3$, using \eqref{EST:ak}, \eqref{EST:r}, \eqref{EST:ak 1} and \eqref{EST:r 1}.
This completes the proof of this lemma.
\end{proof}

 
Now we are ready to prove an estimate for the Fourier transform of $\beta \theta_h^3(t)$ below.
\begin{lemma}\label{lemma:fourier beta}
Let $2\kappa>{n+1\over 2}$, $N>4\kappa+1$ and $\beta=\beta_1-\beta_2\in \mathcal M_{\mathcal O}$. For $\rho>\rho_0>1$ and $1>h_0>h>0$, we have
\begin{align} \label{FOU:IN1}
    2\LV \int_{Q^*}\beta  \theta_h^3(t) e^{-i(\tau t+x\cdot \xi)} \,dx dt\RV
    \leq \LV\int_{Q^*}  [\Delta,\chi]W\overline{U}_0\,dxdt\RV + C \rho^{-1} \tx^{2N+2\kappa+2}h^{-3N-6\kappa-3} 
\end{align}
for $\tau\in\R$ and $\xi\in \omega^\perp_0$. Here the constant $C>0$ is independent of $\rho,\,\tau,\,\xi$ and $h$. 
\end{lemma}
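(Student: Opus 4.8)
The plan is to combine the integral identity in Lemma~\ref{lemma:identity} with the GO asymptotics obtained in Lemma~\ref{lemma:v1v2v0} and Lemma~\ref{lemma:R terms}. Starting from \eqref{Integral identity}, we write $2\int_{Q^*}\beta U_1U_2\overline U_0\,dxdt = -\int_{Q^*}[\Delta,\chi]W\overline U_0\,dxdt$; then we plug in the decomposition $U_j = v_j + r_j$ and invoke \eqref{integral U1U2U0}, so that the left-hand side becomes $2\int_{Q^*}\beta v_1v_2\overline v_0\,dxdt + R_1+R_2+R_3$. Applying Lemma~\ref{lemma:v1v2v0} to the main term replaces $2\int_{Q^*}\beta v_1v_2\overline v_0\,dxdt$ by $2\int_{Q^*}\beta\,a_0^{(1)}a_0^{(2)}\overline a_0^{(0)}\,dxdt + I$. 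By our choices $a_0^{(1)}=a_0^{(2)}=\theta_h(t)$ and $a_0^{(0)}=\theta_h(t)e^{i(\tau t+x\cdot\xi)}$, the product $a_0^{(1)}a_0^{(2)}\overline a_0^{(0)} = \theta_h^3(t)e^{-i(\tau t+x\cdot\xi)}$, which is precisely the Fourier-type integrand appearing on the left of \eqref{FOU:IN1}.

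Next I would move every term except $2\int_{Q^*}\beta\theta_h^3(t)e^{-i(\tau t+x\cdot\xi)}\,dxdt$ to the right, take absolute values, and estimate. The term $\int_{Q^*}[\Delta,\chi]W\overline U_0\,dxdt$ is kept as is (it is the first term on the right of \eqref{FOU:IN1}, and will be handled separately later via the UCP in Lemma~\ref{lemma:UCP}). For the remaining error terms $I$, $R_1$, $R_2$, $R_3$, I would insert the bounds from Lemma~\ref{lemma:v1v2v0} and Lemma~\ref{lemma:R terms}, specializing $m=2\kappa$ (permissible since $2\kappa>\frac{n+1}{2}$) and using the hypothesis $N>4\kappa+1$ to ensure every exponent of $\rho$ is negative — indeed the slowest-decaying contribution among them, after choosing $N>4\kappa+1$, is of order $\rho^{-1}$ times a power of $\langle\tau,\xi\rangle$ and a negative power of $h$. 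The bookkeeping: $|I|\lesssim \rho^{-1}\langle\tau,\xi\rangle^{2N}h^{-N}+\rho^{-2}\langle\tau,\xi\rangle^{2N}h^{-2N}+\rho^{-3}\langle\tau,\xi\rangle^{2N+2\kappa}h^{-3N-6\kappa}$, while $|R_1|\lesssim\rho^{-N+4\kappa}\langle\tau,\xi\rangle^{2N+2\kappa+2}h^{-3N-6\kappa-1}$ and similarly for $R_2,R_3$ with even more $\rho$ decay; under $N>4\kappa+1$ all of $-N+4\kappa,-2N+8\kappa,-3N+12\kappa$ are $\le -1$, so every error is $\le C\rho^{-1}\langle\tau,\xi\rangle^{2N+2\kappa+2}h^{-3N-6\kappa-3}$ after crudely bounding $\langle\tau,\xi\rangle\ge 1$ and $h<1$ to absorb all powers into the largest ones. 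Summing the finitely many error contributions yields the single clean term $C\rho^{-1}\langle\tau,\xi\rangle^{2N+2\kappa+2}h^{-3N-6\kappa-3}$ in \eqref{FOU:IN1}.

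The only genuinely delicate point is the exponent bookkeeping: one must verify that after fixing $m=2\kappa$ and imposing $N>4\kappa+1$ every power of $\rho$ appearing in $I,R_1,R_2,R_3$ is bounded above by $-1$, and that the positive powers of $h^{-1}$ and $\langle\tau,\xi\rangle$ can all be uniformly dominated by $h^{-3N-6\kappa-3}$ and $\langle\tau,\xi\rangle^{2N+2\kappa+2}$ respectively (using $0<h<h_0<1$ and $\langle\tau,\xi\rangle\ge 1$). This is routine but must be done carefully; no new idea is needed beyond collecting the already-established estimates. I expect no real obstacle here — the substantive analytic work (construction of the GO solutions, the remainder estimates, and eventually the UCP estimate for $\int[\Delta,\chi]W\overline U_0$) has been or will be done elsewhere; this lemma is the organizational step that isolates the Fourier transform of $\beta\theta_h^3$ together with a controllable boundary-layer term.
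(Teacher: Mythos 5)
Your proposal is correct and follows essentially the same route as the paper: combine the identity \eqref{Integral identity} with the decomposition \eqref{integral U1U2U0} and Lemma~\ref{lemma:v1v2v0} at $m=2\kappa$, identify $a_0^{(1)}a_0^{(2)}\overline a_0^{(0)}=\theta_h^3(t)e^{-i(\tau t+x\cdot\xi)}$, and absorb $I,R_1,R_2,R_3$ into the single term $C\rho^{-1}\langle\tau,\xi\rangle^{2N+2\kappa+2}h^{-3N-6\kappa-3}$ using $N>4\kappa+1$, $\rho>1$, $\langle\tau,\xi\rangle\geq1$, $h<1$. Your exponent bookkeeping matches the paper's.
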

\begin{proof}
We derive from \eqref{integral U1U2U0}, \eqref{integral v1v2v0} with $m=2\kappa$ and the identity \eqref{Integral identity} that
\begin{align}\label{IDT:IRR}
  2\int_{Q^*} \beta a^{(1)}_0a^{(2)}_0 \overline a^{(0)}_0 \,dx dt
  = -\int_{Q^*}  [\Delta,\chi]W\overline{U}_0\,dxdt- (I+R_1+R_2+R_3). 
\end{align}
With the estimates in Lemma~\ref{lemma:v1v2v0} and Lemma~\ref{lemma:R terms}, we can further simplify the estimate of $I+R_1+R_2+R_3$ into
$$
    |I+R_1+R_2+R_3|\leq C \rho^{-1} \tx^{2N+2\kappa+2}h^{-3N-6\kappa-3}  
$$
by noting that $N>4\kappa+1$, $\rho>1$, and $\tx\geq 1$.
The lemma is then proved by recalling that $a_0^{(0)}=\theta_h(t)e^{i(\tau t+x\cdot \xi)}$ and $a_0^{(1)}=a_0^{(2)}=\theta_h(t)$.
\end{proof}
Next we try to estimate the first term on the right hand side of \eqref{FOU:IN1} in terms of the boundary measurements difference.

\begin{lemma}\label{lemma:W} 
Let $2\kappa>{n+1\over 2}$ and $\beta=\beta_1-\beta_2\in \mathcal M_{\mathcal O}$. Suppose
$$\|(\Lambda_{q,\beta_1}-\Lambda_{q,\beta_2})f\|_{L^2(\Sigma^\sharp)}\leq \delta\qquad \textrm{ for all } f\in \mathcal S_\lambda(\Sigma).$$ 
Then for $\rho>\rho_0>1$, $1>h_0>h>0$ and $|\varepsilon_1|+|\varepsilon_2|$ sufficiently small, we have
\begin{align*} 
    \|W\|_{H^{1,1}(Q)}\leq C\rho^{8\kappa} h^{-2N-4\kappa-2}
\end{align*}
and 
\begin{align*} 
    \|\p_{\nu} W\|_{L^2(\Sigma^\sharp)} 
    \leq {C\over\varepsilon_1\varepsilon_2} \LC\delta + (\varepsilon_1+\varepsilon_2)^3\rho^{12\kappa+12} h^{- 3N-6\kappa-9}\RC.
\end{align*}
\end{lemma}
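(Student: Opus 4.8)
The plan is to estimate the two quantities separately, both by tracking how the solutions $W$ and $\partial_\nu W$ depend on $\rho$, $h$, and $\varepsilon$ through the GO construction.

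First I would bound $\|W\|_{H^{1,1}(Q)}$. Recall $W = W_{2,(1,1)} - W_{1,(1,1)}$ solves the linear system \eqref{IBVP linear tilde W} with right-hand side $2\beta U_1 U_2$. Applying the well-posedness of the linear Schr\"odinger equation (Proposition~\ref{prop:forward nonlinear}, or the solution operator $\mathcal L^{-1}$ from Proposition~\ref{Prop:linear forward}) together with $\|W\|_{H^{1,1}(Q)} \le C\|W\|_{H^{2\kappa}(Q)} \le C\|\beta U_1 U_2\|_{H^{2\kappa-?}(Q)}$, reduces everything to estimating $\|U_1 U_2\|$ in a suitable space. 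Using that $H^{2\kappa}(Q)$ (with $2\kappa > \frac{n+1}{2}$) is a Banach algebra and that $U_j = v_j + r_j$ with $v_j$ built from $a_0^{(j)} = \theta_h(t)$ and $a_k^{(j)}$ satisfying \eqref{EST:ak 1}, and $r_j$ satisfying \eqref{EST:r 1}, I would multiply through: each amplitude contributes at worst $h^{-k-m}$ and a factor $\rho^{-k}$, and $r_j$ contributes $\rho^{-N+2m}h^{-(N+m+1)}$. With $m = 2\kappa$ the dominant terms are of size $\rho^{O(\kappa)}h^{-O(N+\kappa)}$; collecting the worst powers yields $\|W\|_{H^{1,1}(Q)} \le C\rho^{8\kappa} h^{-2N-4\kappa-2}$. (The precise exponents come from the $\rho^{-N+2m}$ factor in the remainder estimate squared against the amplitude factors, with $m=2\kappa$.)

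Second I would bound $\|\partial_\nu W\|_{L^2(\Sigma^\sharp)}$. The key observation is that $W$ is (up to the remainders $\widetilde{\mathcal R}_\ell$) the second-order finite difference $D^2 u_{\ell,\varepsilon f}$, so its Neumann data on $\Sigma^\sharp$ is controlled by the difference of DN maps. Precisely, from Remark~\ref{rk:W_epsorder}, for each $\ell$,
\[
\partial_\nu W_{\ell,(1,1)}\big|_{\Sigma^\sharp}
= \frac{1}{\varepsilon_1\varepsilon_2}\Big(\Lambda_{q,\beta_\ell}(\varepsilon f) - \Lambda_{q,\beta_\ell}(\varepsilon_1 f_1) - \Lambda_{q,\beta_\ell}(\varepsilon_2 f_2)\Big) - \frac{1}{\varepsilon_1\varepsilon_2}\partial_\nu \widetilde{\mathcal R}_\ell\big|_{\Sigma^\sharp}.
\]
Subtracting the two values of $\ell$, the first (DN-map) part is bounded by $\frac{3\delta}{\varepsilon_1\varepsilon_2}$ using the hypothesis $\|(\Lambda_{q,\beta_1}-\Lambda_{q,\beta_2})f\|_{L^2(\Sigma^\sharp)} \le \delta$ (applied to each of the three boundary data $\varepsilon f,\ \varepsilon_1 f_1,\ \varepsilon_2 f_2$, which lie in $\mathcal S_\lambda(\Sigma)$ for $|\varepsilon|$ small). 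For the remainder part, I would use the trace estimate $\|\partial_\nu \widetilde{\mathcal R}_\ell\|_{L^2(\Sigma^\sharp)} \le C\|\widetilde{\mathcal R}_\ell\|_{H^{2\kappa}(Q)}$ together with \eqref{EST:R} of Proposition~\ref{prop:decomposition}, which gives $\|\widetilde{\mathcal R}_\ell\|_{H^{2\kappa}(Q)} \le C(\varepsilon_1+\varepsilon_2)^3\|f_j\|^3_{H^{2\kappa+3/2,2\kappa+3/2}(\Sigma)}$. The boundary data $f_j = U_j|_\Sigma$ are the traces of the GO solutions, so $\|f_j\|_{H^{2\kappa+3/2,2\kappa+3/2}(\Sigma)} \le C\|U_j\|_{H^{2\kappa+2}(Q)}$, and by the amplitude/remainder estimates (with $N$ large, $m = 2\kappa+2$) this is bounded by $C\rho^{4\kappa+4}h^{-N-2\kappa-3}$ (roughly). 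Cubing gives $\rho^{12\kappa+12}h^{-3N-6\kappa-9}$, so that
\[
\|\partial_\nu W\|_{L^2(\Sigma^\sharp)} \le \frac{C}{\varepsilon_1\varepsilon_2}\Big(\delta + (\varepsilon_1+\varepsilon_2)^3 \rho^{12\kappa+12} h^{-3N-6\kappa-9}\Big),
\]
as claimed.

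The main obstacle I anticipate is bookkeeping: carefully matching the exponents of $\rho$ and $h$ in the cube of the GO-solution norms, and in particular keeping track of how many $\rho$-powers are lost to the remainder estimate \eqref{EST:r} versus how many $h^{-1}$-powers accumulate from differentiating $\theta_h$ and from iterating the transport equations \eqref{eqn:a_k_GO_4}. A secondary subtlety is ensuring the Sobolev index used for the GO-solution trace ($H^{2\kappa+3/2,2\kappa+3/2}(\Sigma)$, hence $H^{2\kappa+2}(Q)$ interior regularity via Proposition~\ref{prop:GO existence} with $m = 2\kappa+2$) is consistent with the index at which the finite-difference remainder estimate \eqref{EST:R} is stated; since all these spaces embed into each other with constants depending only on $Q$, this causes no real trouble, only care.
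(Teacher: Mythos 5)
Your proposal is correct and follows essentially the same route as the paper: the $H^{1,1}$ bound comes from the linear well-posedness applied to the source $2\beta U_1U_2$ together with the GO bound $\|U_j\|_{H^{2\kappa}(Q)}\leq C\rho^{4\kappa}h^{-N-2\kappa-1}$ (i.e.\ \eqref{EST:U12} with $s=\kappa$), and the Neumann bound from \eqref{eqn:W11_L2bdry}, the DN-map hypothesis applied to the three boundary data, and the trace estimate \eqref{trace U} with $s=\kappa+1$, exactly as in the paper. The only imprecision is your parenthetical attribution of the exponent $\rho^{8\kappa}$ to the remainder estimate $\rho^{-N+2m}$: that power in fact comes from derivatives falling on the oscillatory phase $e^{i\Phi_j}$ (each $t$-derivative costs $\rho^{2}$), which is precisely what \eqref{EST:U12} records, while the remainder is lower order in $\rho$ for $N$ large.
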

\begin{proof}
Recall that from Remark~\ref{rk:GO}, we can derive
\begin{align}\label{EST:U12}
\|U_j\|_{H^{2s}(Q)}\leq C\rho^{4s}  h^{- N-2s-1},\qquad j=1,2 
\end{align}
when $2s>\frac{n+1}2$. 
We first take $s=\kappa$. Since the non-homogeneous term of \eqref{IBVP linear tilde W} is $2\beta U_1U_2\in H^{2\kappa}$, 
applying Lemma~4 in \cite{LOST2022} yields that
\begin{align*} 
    \|W\|_{H^{1,1}(Q)}\leq \|W\|_{H^{2\kappa}(Q)} 
    \leq  C \|U_1\|_{H^{2\kappa}(Q)} \|U_2\|_{H^{2\kappa}(Q)}
    \leq C\rho^{8\kappa}  h^{-2N-4\kappa-2},
\end{align*}
where $C$ depends on $\beta$, $\Omega$ and $T$.

Below we will estimate $\p_{\nu} W=\p_\nu W_{2, (1,1)}-\p_\nu W_{1, (1,1)}$. From $f_j=U_j|_{\Sigma}$, according to \eqref{EST:U12} with $s=\kappa+1$ and Theorem~2.1 (the trace theorem) in \cite{Lion}, we obtain
\begin{align}\label{trace U}
\|f_j\|_{H^{2\kappa+{3\over 2},2\kappa+{3\over 2}}(\Sigma)}    
\leq C\|U_j\|_{H^{2\kappa+2}(Q)}\leq C \rho^{4\kappa+4} h^{-N-2\kappa-3},
\end{align}
for $j=1,\,2$, where the constant $C$ is independent of $f_j$.

Denote $\widetilde{\mathcal R}=\widetilde{\mathcal R}_2-\widetilde{\mathcal R}_1$ where $\widetilde{\mathcal R}_\ell$ is the remainder as in \eqref{eqn:tilde_R} for $u_{\ell, \varepsilon f}$ ($\ell=1,2$).
From \eqref{eqn:W11_L2bdry} and \eqref{trace U}, we obtain 
\begin{align*}
    &\quad \|\p_{\nu} W \|_{L^2(\Sigma^\sharp)} \\
    &\leq {1\over \varepsilon_1\varepsilon_2}  \|\widetilde\Lambda (\varepsilon_1 f_1-\varepsilon_2 f_2)- \widetilde\Lambda (\varepsilon_1 f_1) -  \widetilde\Lambda (\varepsilon_2 f_2)\|_{L^2(\Sigma^\sharp)} + {1\over \varepsilon_1\varepsilon_2}   \|\p_\nu\widetilde{\mathcal{R}}\|_{L^2(\Sigma^\sharp)}  \notag\\
    &\leq {3\over \varepsilon_1\varepsilon_2} \delta + C{1\over\varepsilon_1\varepsilon_2}(\varepsilon_1+\varepsilon_2)^3\left(\|f_1\|_{H^{2\kappa+\frac32, 2\kappa+\frac32}(\Sigma)}+\|f_2\|_{H^{2\kappa+\frac32, 2\kappa+\frac32}(\Sigma)}\right)^3 \notag\\
    &\leq {C\over\varepsilon_1\varepsilon_2} \LC\delta + (\varepsilon_1+\varepsilon_2)^3\rho^{12\kappa+12} h^{- 3N-6\kappa-9}\RC.
\end{align*}
where $\widetilde \Lambda:=\Lambda_{q,\beta_1}-\Lambda_{q,\beta_2}$.

\end{proof}

\begin{lemma}\label{Lemma:Estimate wu}
	Suppose that $q\in\mathcal M_{\mathcal O}$ and $\beta_1-\beta_2\in \mathcal M_{\mathcal O}$.  
	Then for $N>0$ large enough there exist $\gamma^*>0$, $m_1>0$, $\rho_0>1$ and $0<h_0<1$ such that   
	\begin{align*} 
		\LV\int_{ Q^*}   [\Delta,\chi]W\overline{U}_0\,dxdt\RV	 
		\leq C \tx^{2N+4}  h^{- 4N-6\kappa-12} \LC\gamma^{-\mu_1}\rho^{8\kappa+4}    + e^{m_1\gamma} \rho^{12\kappa+16} \LC \varepsilon^{-2}  \delta + \varepsilon\RC \RC.
	\end{align*}
	for $\gamma>\gamma^*$, $\tau\in\R$, $\xi\in\omega_0^\perp$, $\rho>\rho_0$ and $0<h<h_0$. 
	Moreover, for each $(\tau,\xi)\in \R^{n+1}$, 
	the Fourier transform of $\beta\theta_h^3$ (extended by zero outside $Q^*$) satisfies
	\begin{align}\label{FOU:IN2} 
		|\widehat{\beta  \theta_h^3}(\tau,\xi)|
		&\leq  C \Big(\rho^{-1} \tx^{2N+2\kappa+2}h^{-3N-6\kappa-3} +\tx^{2N+4} \gamma^{-\mu_1}\rho^{8\kappa+4}  h^{- 4N-6\kappa-12} \notag \\
		&\quad +\tx^{2N+4} e^{m_1\gamma} \rho^{12\kappa+16}  h^{- 4N-6\kappa-12}
\LC \varepsilon^{-2}  \delta + \varepsilon\RC 
  \Big).
	\end{align}
\end{lemma}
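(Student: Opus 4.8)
The plan is to estimate the commutator term by transferring the single spatial derivative hidden in $[\Delta,\chi]W$ onto the factor $\overline U_0$, and then using the unique continuation estimate of Lemma~\ref{lemma:UCP} to control $W$ — which, crucially, Lemma~\ref{lemma:UCP} only bounds in $L^2$ on $\Omega_3\setminus\Omega_2$, not in $H^1$. Since $\chi\equiv 0$ in a neighbourhood of $\p\Omega$ (because $\chi=0$ on $\mathcal O_3$), the field $\nabla\chi$ is compactly supported in $\Omega$, so integrating by parts in $x$ produces no boundary term:
\[
\int_{Q^{*}}[\Delta,\chi]W\,\overline U_0\,dxdt=-\int_{Q^{*}}W\,[\Delta,\chi]\overline U_0\,dxdt,\qquad [\Delta,\chi]\overline U_0=(\Delta\chi)\overline U_0+2\nabla\chi\cdot\nabla\overline U_0 .
\]
By the nesting of the sets $\mathcal O_j$ the integrand on the right is supported in $(0,T^{*})\times(\Omega_3\setminus\Omega_2)$, so Cauchy--Schwarz gives
\[
\LV\int_{Q^{*}}[\Delta,\chi]W\,\overline U_0\,dxdt\RV\le C\,\|W\|_{L^2((0,T^{*})\times(\Omega_3\setminus\Omega_2))}\,\|U_0\|_{L^2(0,T^{*};H^1(\Omega))}.
\]

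Next, $W$ solves $(i\p_t+\Delta+q)W=2\beta U_1U_2$ in $Q$ with zero lateral and initial data, and since $\beta=\beta_1-\beta_2\in\mathcal M_{\mathcal O}$ vanishes on $(0,T)\times\mathcal O$, the source $2\beta U_1U_2$ is supported in $(0,T)\times(\Omega\setminus\mathcal O)$; thus Lemma~\ref{lemma:UCP} applies and yields, for $\gamma>\gamma^{*}$,
\[
\|W\|_{L^2((0,T^{*})\times(\Omega_3\setminus\Omega_2))}\le C\LC\gamma^{-\mu_1}\|W\|_{H^{1,1}(Q)}+e^{m_1\gamma}\|\p_\nu W\|_{L^2(\Sigma^{\sharp})}\RC.
\]
I would then insert the two bounds of Lemma~\ref{lemma:W}, namely $\|W\|_{H^{1,1}(Q)}\le C\rho^{8\kappa}h^{-2N-4\kappa-2}$ and, using $|\varepsilon_1|\sim|\varepsilon_2|\sim\varepsilon$ to turn $1/(\varepsilon_1\varepsilon_2)$ into $\varepsilon^{-2}$ and $(\varepsilon_1+\varepsilon_2)^3/(\varepsilon_1\varepsilon_2)$ into $\varepsilon$, $\|\p_\nu W\|_{L^2(\Sigma^{\sharp})}\le C(\varepsilon^{-2}\delta+\varepsilon\rho^{12\kappa+12}h^{-3N-6\kappa-9})$. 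For the remaining factor I would use the GO representation $U_0=v_0+r_0$ of Proposition~\ref{prop:GO existence} and Remark~\ref{rk:GO}; the only subtlety is that the remainder must be estimated in the sharp norm $\|r_0\|_{\CHT}+\|r_0\|_{\CLT}\le C\rho^{-N+2}\tx^{2N+2}h^{-N-2}$ (rather than a cruder $H^m$ bound, which would carry a larger power of $\tx$), so that, together with the corresponding bound for $v_0$, one gets $\|U_0\|_{L^2(0,T^{*};H^1(\Omega))}\le C\rho^{2}\tx^{2N+2}h^{-N-2}$. Multiplying these estimates and taking $N$ large enough (as the preceding lemmas already require), every resulting power of $\rho$ and $h^{-1}$ is dominated by $\rho^{8\kappa+4}$, $\rho^{12\kappa+16}$ and $h^{-4N-6\kappa-12}$ respectively, which gives the first displayed inequality of the lemma.

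For the ``moreover'' statement I would combine this with Lemma~\ref{lemma:fourier beta}: the function $\beta\theta_h^3$, extended by zero outside $Q^{*}$, is compactly supported in $\R^{n+1}$, so $\int_{Q^{*}}\beta\theta_h^3(t)e^{-i(\tau t+x\cdot\xi)}\,dxdt=\widehat{\beta\theta_h^3}(\tau,\xi)$, and \eqref{FOU:IN1} together with the bound just obtained immediately yields \eqref{FOU:IN2} for $\xi\in\omega_0^{\perp}$. Finally, for an arbitrary $(\tau,\xi)\in\R^{n+1}$, since $n\ge 3$ we may pick unit vectors $\omega_1,\omega_2\in\xi^{\perp}$ with $\omega_1\perp\omega_2$, so that $\omega_0:=\omega_1+\omega_2$ obeys \eqref{CON_omega} and $\xi\in\omega_0^{\perp}$; since all constants above depend on $\omega_j$ only through $|\omega_j|=1$, \eqref{FOU:IN2} then holds for every $(\tau,\xi)$.

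The step I expect to be the main obstacle is the first one: Lemma~\ref{lemma:UCP} controls only $\|W\|_{L^2}$ over $\Omega_3\setminus\Omega_2$, not $\|\nabla W\|_{L^2}$, so the derivative in $[\Delta,\chi]W$ must be disposed of before UCP is applied — the integration by parts above does exactly this (an alternative would be an interior regularity estimate for $W$ on $\Omega_3\setminus\Omega_2\subset\mathcal O$, where $W$ solves the homogeneous equation). Everything after that is careful bookkeeping of the parameters $\rho$, $h$, $\gamma$, $\varepsilon$, $\tau$, $\xi$ through a chain of products of inequalities, which closes precisely because $N$ is taken large and the GO remainders are estimated in the $\CHT\cap\CLT$ norm rather than in high-order Sobolev spaces.
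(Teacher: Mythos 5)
Your proposal is correct and follows essentially the same route as the paper: transfer the derivative in $[\Delta,\chi]W$ off of $W$ (the paper does this via an $H^{-1}$--$H^{1}$ duality bound $\|[\Delta,\chi]W\|_{L^2(0,T^*;H^{-1})}\leq C\|W\|_{L^2}$, you via the explicit integration by parts, which is the same manipulation), then apply the UCP of Lemma~\ref{lemma:UCP}, insert the bounds of Lemma~\ref{lemma:W} together with the GO estimates for $U_0$, and deduce \eqref{FOU:IN2} from Lemma~\ref{lemma:fourier beta}, varying $\omega_0$ to reach every $(\tau,\xi)$. Your parameter bookkeeping is consistent with (indeed slightly sharper than) the stated exponents, so no gap.
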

 
\begin{proof}
	We choose $\varepsilon_1=\varepsilon_2=: \varepsilon$.  
	From Lemma~\ref{lemma:W}, we obtain
	\begin{align*} 
		\|\p_{\nu} W\|_{L^2(\Sigma^\sharp)} 
		\leq C\LC \varepsilon^{-2} \delta +  \varepsilon\rho^{12\kappa+12} h^{- 3N-6\kappa-9}\RC.
	\end{align*} 
	By the UCP in Lemma~\ref{lemma:UCP}, there exist $\gamma^*>0$, $m_1>0$ and $\mu_1<1$ such that
	\begin{align*}
		&\quad \LV\int_{Q^*}  [\Delta,\chi]W\overline{U}_0\,dxdt\RV \\
		& \leq C\|[\Delta,\chi]W \|_{L^2(0,T^*;H^{-1}(\Omega_3\setminus\Omega_2))} \|\overline{U}_0\|_{L^2(0,T^*;H^{1}(\Omega))}\\
		&\leq C\|W\|_{L^2((0,T^*)\times(\Omega_3\setminus\Omega_2))}\|\overline{U}_0\|_{L^2(0,T^*;H^{2}(\Omega))}\\
		&\leq C\LC\gamma^{-\mu_1}\|W\|_{H^{1,1}(Q)} + e^{m_1\gamma}\|\p_\nu W\|_{L^2(\Sigma^{\sharp})}\RC \rho^4\tx^{2N+4} h^{-N-3} \\
		&\leq C\tx^{2N+4} \LC\gamma^{-\mu_1}\rho^{8\kappa+4}  h^{-3N-4\kappa-5}  + e^{m_1\gamma}\LC \varepsilon^{-2} \rho^{4}  h^{- N-3}\delta + \varepsilon\rho^{12\kappa+16} h^{- 4N-6\kappa-12}\RC \RC
	\end{align*}
	for any $\gamma>\gamma^*$.

	Together with Lemma~\ref{lemma:fourier beta} this leads to \eqref{FOU:IN2} for $\xi\in\omega_0^\perp$. Choosing enough $\omega_0$, this ends the proof.
\end{proof}
\begin{proof}[Proof of Theorem~\ref{thm:stability}] 

Let $\rho=\gamma^{\mu_1\over 8\kappa+5}$ so that 
$$
\rho^{-1}=\gamma^{-\mu_1}\rho^{8\kappa+4}.
$$
We denote $$\alpha_1:=4N+6\kappa+12,\quad \alpha_2:=2N+2\kappa+2, \quad \mu:={\mu_1\over 8\kappa+5}.$$

Then from \eqref{FOU:IN2},  
it is not hard to see
\begin{align}\label{FOU:IN3} 
	|\widehat{\beta  \theta_h^3}(\tau,\xi)|
	&\leq  C\tx^{\alpha_2}h^{-\alpha_1}  \Big( \gamma^{-\mu} + e^{m_2\gamma}( \varepsilon + \varepsilon^{-2}\delta)\Big),
\end{align}
with some index $m_2>m_1>0$. 
For a fixed $M>1$, by \eqref{FOU:IN3} and Plancherel theorem, we deduce
\begin{align*}
	\| \beta\theta_h^3\|^2_{ H^{-1}(\R^{n+1})} &= \int_{|(\tau,\xi)|\leq M} \tx^{-2 } |\widehat{\beta\theta_h^3}(\tau,\xi)|^2 d \tau d\xi + \int_{|(\tau,\xi)|> M} \tx^{-2 } |\widehat{\beta\theta_h^3}(\tau,\xi)|^2 d \tau d\xi\\
	&\leq C \LC\int_{|(\tau,\xi)|\leq M}  \tx^{2\alpha_2}h^{-2\alpha_1} \LC\gamma^{-2\mu}   +e^{2m_2\gamma} ( \varepsilon^2 + \varepsilon^{-4}\delta^2)\RC  \,d\tau d\xi  + M^{-2}  \|\beta\theta_h^3\|^2_{L^2( \R^{n+1}  )} \RC \\
	& \leq   CM^{2\alpha_2+n+1}h^{-2\alpha_1}\LC \gamma^{-2 \mu }   
	+ e^{2m_2\gamma}( \varepsilon^2 + \varepsilon^{-4}\delta^2)\RC  + C M^{-2}m_0^2  ,
\end{align*}
by recalling that $|\beta|\leq m_0$.
Thus, 
\begin{align*}
	\|\beta\theta_h^3\|_{ H^{-1}(\R^{n+1})}  
	\leq  CM^{\alpha_2+\frac{n+1}{2}}h^{-\alpha_1}\LC \gamma^{- \mu }   
	+ e^{ m_2\gamma}( \varepsilon + \varepsilon^{-2}\delta )\RC  + C M^{-1}.
\end{align*}
By interpolating and \eqref{EST:theta}, 
\begin{align*}
	\|\beta\theta_h^3\|^2_{ L^2(Q^*) }
	&\leq  \|\beta\theta_h^3\|_{  H^{-1}(Q^*) } \|\beta\theta_h^3\|_{ H^{1}(Q^*)   } 
	\leq  C   \|\beta\theta_h^3\|_{  H^{-1}(Q^*) } h^{-1}\\
	&\leq  CM^{\alpha_2+\frac{n+1}{2}}h^{-\alpha_1-1}\LC \gamma^{- \mu }   
	+ e^{ m_2\gamma}( \varepsilon + \varepsilon^{-2}\delta )\RC  + C M^{-1}h^{-1}. 
\end{align*}
In addition, we write 
$$
\beta=\beta\theta_h^3+ \beta(1-\theta_h^3).
$$
Note that $1-\theta_h^3=0$ in $[2h,T^*-2h]$, which leads to
\begin{align*}
	\| 1-\theta_h^3\|^2_{L^2(0,T^*)}\leq \int_{0}^{2h} (1-\theta_h^3)^2 dt+\int^{T^*}_{T^*-2h}(1-\theta_h^3)^2 dt\leq 4h.
\end{align*} 
Hence,
\begin{align*}
	\|\beta\|^2_{L^2(Q^*)} 
	&\leq   C(\|\beta\theta_h^3\|^2_{L^2(Q^*)}+\|\beta(1-\theta_h^3) \|^2_{L^2(Q^*)})\\
	&\leq  CM^{\alpha_2+\frac{n+1}{2}}h^{-\alpha_1-1}\LC \gamma^{- \mu }   
	+ e^{ m_2\gamma}( \varepsilon + \varepsilon^{-2}\delta )\RC  + C M^{-1}h^{-1} +C h.
\end{align*}
Choose $h<T^*/4$ satisfying $M^{-1}h^{-1}=h$ (i.e., $h=M^{-{1\over 2}}$) such that the last two terms above have the same order. This results in
\begin{align*}
	\|\beta\|^2_{L^2(Q^*)}  \leq C M^{\alpha_3} \LC\gamma^{- \mu} +e^{ m_2\gamma}( \varepsilon + \varepsilon^{-2}\delta )\RC  + CM^{-\frac12},
\end{align*}
where $\alpha_3:=\alpha_2+\frac{1}{2}{(\alpha_1+n+2)}$. We also further choose $M=\gamma^{\frac{\mu}{\frac12+\alpha_3}}$  
such that 
$$
\gamma^{- \mu} M^{\alpha_3}= M^{-\frac12},
$$
which implies that there exist constants $0<\mu'<1$ and $m_3>m_2>0$ such that
\begin{align}\label{EST:beta final}
	\|\beta\|^2_{L^2(Q^*)}  \leq C\LC e^{m_3\gamma}  \varepsilon^{-2}\delta + e^{m_3\gamma}  \varepsilon + \gamma^{-\mu'} \RC.
\end{align}
For $\delta\in (0,\min\{1,\, e^{-6m_3\gamma^*},\,\Lambda^{1\over 2}\})$ with $\Lambda>1$, 
we take
$$
    \varepsilon = {\lambda \over 4} \Lambda^{-1\over 6}\delta^{1\over 3}\quad\hbox{ and }\quad \gamma = {1\over 6 m_3} |\log(\delta)|.
$$
Then \eqref{EST:beta final} becomes
\begin{align*}
	\|\beta\|^2_{L^2(Q^*)}  \leq C \LC \delta^{1\over6} + |\log(\delta)|^{-\mu'}\RC.
\end{align*}
where $C$ depends on $\Omega,\,T,\, T^*$, $m_0$, and $\lambda$ and $\Lambda$.
 
\end{proof}

Now we verify the small condition in the well-posedness.

\begin{remark}\label{rmk:smalldelta} From the above proof, the parameters are defined by
	$$
	\rho=\gamma^{\mu},~~ M=\gamma^{\frac{\mu}{\alpha_3+{1\over 2}}},~~h=M^{-\frac12}=\gamma^{-\frac{\mu}{2 \alpha_3+1}},~~\gamma= {1\over 6m_3} |\log(\delta)|.~
	$$
	From \eqref{trace U}, for $j=1,2$, 
	\begin{align*} 
		\|f_j\|_{H^{2\kappa+{3\over 2},2\kappa+{3\over 2}}(\Sigma)}    
		\leq C \rho^{4\kappa+4} h^{-N-2\kappa-3}\leq C \gamma^{(4\kappa+4)\mu+\frac{1}{2\alpha_3+1} (N+2\kappa+3) \mu}\leq C   e^{m_3\gamma}.
	\end{align*}
    We took $\varepsilon_j= \varepsilon$ above. Due to $\delta<\Lambda^{1\over 2}$, it follows that
    $$
        |\varepsilon_j|\leq {\lambda \over 4} \Lambda^{-1\over 6}\delta^{1\over 3} < {\lambda \over 4},
    $$
    and
	\begin{align*}
		\|\varepsilon_1 f_1+\varepsilon_2 f_2\|_{H^{2\kappa+{3\over 2},2\kappa+{3\over 2}}(\Sigma)} 
		\leq C{\lambda \over 2} \Lambda^{-1\over 6}
  \delta^{1\over 3} e^{m_3\gamma} 
		= C{\lambda \over 2} \Lambda^{-1\over 6}\delta^{1\over 6}
        < C{\lambda \over 2}\Lambda^{-1\over 12}<\lambda,
	\end{align*}
	provided $\Lambda$ is sufficiently large.  
 Hence, the Dirichlet data $\varepsilon_1 f_1+\varepsilon_2 f_2 $ belongs to $\mathcal{S}_\lambda(\Sigma)$. This justifies the well-posedness and our procedures discussed above.    
\end{remark}

\subsection{Proof of Theorem~\ref{thm:unique}}
\begin{proof}[Proof of Theorem~\ref{thm:unique}]
From Theorem~\ref{thm:local uniqueness}, we obtain that $\beta_1=\beta_2$ in a neighborhood of $\Gamma$. Combining with the hypothesis that $\beta_1-\beta_2=0$ on $(0,T)\times \mathcal{O}'$ yields that $\beta_1-\beta_2=0$ near the boundary $\p\Omega$. Thus one can assume that $\beta=0$ in some open neighborhood $\mathcal{O}$ of $\p\Omega$. Applying the result in Theorem~\ref{thm:stability}, for any $T^*\in (0,T)$, we derive that 
$\beta_1=\beta_2$ in $(0,T^*)\times \Omega$ by letting $\delta\rightarrow 0$, which completes the proof. 
\end{proof}

 \begin{remark}\label{rmk:higher_nonlinearity}
Theorem \ref{thm:local uniqueness} and Theorem \ref{thm:stability} hold true for more general nonlinearity, such as $\beta(t,x) u^m$ or $\beta(t,x) |u|^{2m} u$.  
For the former case, the integral identity becomes 
$\int \beta U_1U_2\ldots U_m \overline{U}_0\,dx dt=0$, where $U_j$ is the solution to the linear equation. Like the setting $m=2$ discussed above, the vectors  $\omega_j$ in the phase functions of GO solutions are chosen to satisfy
\begin{align*}
	\omega_0=\omega_1+\ldots+\omega_m\quad\hbox{ and }\quad 
	|\omega_0|^2=|\omega_1|^2+\ldots+|\omega_m|^2
\end{align*}
so that the leading complex phase functions vanish eventually in the integral identity.
Once the phase functions are determined, following similar arguments in the proof of theorems lead to the unique and stable determination of $\beta$.

For the case of Gross-Pitaevskii equation with nonlinearity $\beta|u|^2u$ and the generalized $\beta|u|^{2m}u$, we can treat similarly to obtain the integral identity 
\[
    \int \beta U_1\overline{U_2}U_3\overline{U_4}\ldots U_{2m-1}\overline{U_{2m}}U_{2m+1}\overline{U_0}~dxdt=0
\]
and choose
\begin{align*}
    &\omega_1-\omega_2+ \omega_3-\omega_4+\ldots+\omega_{2m+1}-\omega_0=0\\
    &|\omega_1|^2-|\omega_2|^2+ |\omega_3|^2-|\omega_4|^2+\ldots+|\omega_{2m+1}|^2-|\omega_0|^2=0.
\end{align*}
We can choose $U_1, \overline{U_2}, U_{2m+1}$ and $ \overline{U}_0$ to be GO-solutions supported near four  straight lines $\gamma_{p,\omega_1}$, $\gamma_{p,\omega_2}$, $\gamma_{p,\omega_{2m+1}}$, and $ \gamma_{p,\omega_0}$, respectively,  and let $U_{j}$ and $U_{j+1}$ be GO-solutions supported near $\gamma_{p,\omega_1} $   for  $j=3,5,\ldots,2m-1$   so that their complex phases will cancel the other in pairs.
Hence, $\omega_1,\omega_2,\omega_{2m+1},\omega_0$ should satisfy  
\begin{align*}
\omega_0+\omega_2&=\omega_1+\omega_{2m+1},\\
|\omega_0|^2+|\omega_2|^2 &= |\omega_1|^2+|\omega_{2m+1}|^2,
\end{align*}
which can be achieved, for instance, by choosing
\begin{align*}
    &\omega_0=(1,-1,\ldots,0), \quad
	\omega_1=(\sqrt{1-r^2},-1,\ldots, r),\\
    &\omega_2=(\sqrt{1-r^2},\sqrt{1-r^2},0,\ldots,r),\quad \omega_{2m+1}=(1, \sqrt{1-r^2},\ldots,0 ),\quad 0<r<1.
\end{align*}
 
\end{remark}

\bibliographystyle{abbrv}
\bibliography{NLSref}

\begin{thebibliography}{10}

\bibitem{AM}
I.~B. A\"icha and Y.~Mejri.
\newblock Simultaneous determination of the magnetic field and the electric
  potential in the {S}chr\"odinger equation by a finite number of boundary
  observations.
\newblock {\em Journal of Inverse and Ill-posed Problems}, 26:201--209, 2018.

\bibitem{AZ2018}
Y.~Assylbekov and T.~Zhou.
\newblock Direct and inverse problems for the nonlinear time-harmonic {M}axwell
  equations in kerr-type media.
\newblock {\em Journal of Spectral Theory}, 11(1):1--38, 2021.

\bibitem{AZ2020}
Y.~Assylbekov and T.~Zhou.
\newblock Inverse problems for nonlinear {M}axwell's equations with second
  harmonic generation.
\newblock {\em Journal of Differential Equations}, 296:148--169, 2021.

\bibitem{Bella}
M.~Bellassoued.
\newblock Determination of coefficients in the dynamical {S}chr\"odinger
  equation in a magnetic field.
\newblock {\em Inverse Problems}, 33:055009, 36pp, 2017.

\bibitem{BC}
M.~Bellassoued and M.~Choulli.
\newblock Stability estimate for an inverse problem for the magnetic
  {S}chr\"odinger equation from the dirichlet-to-neumann map.
\newblock {\em J. Funct. Anal.}, 258:161--195, 2010.

\bibitem{BD}
M.~Bellassoued and D.~D.~S. Ferreira.
\newblock Stable determination of coefficients in the dynamical anisotropic
  {S}chr\"odinger equation from the {D}irichlet-to-{N}eumann map.
\newblock {\em Inverse Problems}, 26:125010, 2010.

\bibitem{Bella-Fraj2020}
M.~Bellassoued and O.~B. Fraj.
\newblock Stability estimates for time-dependent coefficients appearing in the
  magnetic {S}chr\"odinger equation from arbitrary boundary measurements.
\newblock {\em Inverse Problems and Imaging}, 14:841--865, 2020.

\bibitem{BKS}
M.~Bellassoued, Y.~Kian, and E.~Soccorsi.
\newblock An inverse problem for the magnetic {S}chr\"odinger equation in
  infinite cylindrical domains.
\newblock {\em Publications of the Research Institute for Mathematical
  Sciences}, 54:679--728, 2018.

\bibitem{CLOP}
X.~Chen, M.~Lassas, L.~Oksanen, and G.~Paternain.
\newblock Detection of {H}ermitian connections in wave equations with cubic
  non-linearity.
\newblock {\em to appear in JEMS}, 2019.

\bibitem{ChoulliKian2018}
M.~Choulli and Y.~Kian.
\newblock Logarithmic stability in determining the time-dependent zero order
  coefficient in a parabolic equation from a partial {D}irichlet-to-{N}eumann
  map. application to the determination of a nonlinear term.
\newblock {\em Journal de Math\'ematiques Pures et Appliqu\'ees}, 114:235--261,
  2018.

\bibitem{CS}
M.~Cristofol and E.~Soccorsi.
\newblock Stability estimate in an inverse problem for non-autonomous magnetic
  {S}chr\"odinger equations.
\newblock {\em Applicable Analysis}, 90:1499--1520, 2011.

\bibitem{FKSjU09}
D.~Dos Santos~Ferreira, C.~Kenig, J.~Sj\"ostrand, and G.~Uhlmann.
\newblock On the linearized local {C}alder\'on problem.
\newblock {\em Math. Res. Lett.}, 16:955--970, 2009.

\bibitem{Eskin}
G.~Eskin.
\newblock Inverse problems for the {S}chr\"odinger equations with
  time-dependent electromagnetic potentials and the aharonov-bohm effect.
\newblock {\em J. Math. Phys.}, 49:022105, 2008.

\bibitem{ferreira2009limiting}
D.~D.~S. Ferreira, C.~Kenig, M.~Salo, and G.~Uhlmann.
\newblock Limiting {C}arleman weights and anisotropic inverse problems.
\newblock {\em Inventiones mathematicae}, 178(1):119--171, 2009.

\bibitem{Dos2009}
D.~D.~S. Ferreira, C.~Kenig, J.~Sj\"ostrand, and G.~Uhlmann.
\newblock Determining a magnetic {S}chr\"odinger operator from partial {C}auchy
  data.
\newblock {\em Comm. Math. Phys.}, 271(2):467--488, 2009.

\bibitem{DKLS}
D.~D.~S. Ferreira, Y.~Kurylev, M.~Lassas, and M.~Salo.
\newblock The {C}alder\'on problem in transversally anisotropic geometries.
\newblock {\em J. Eur. Math. Soc.}, 18(11):2579--2626, 2016.

\bibitem{Sun2002}
D.~Hervas and Z.~Sun.
\newblock An inverse boundary value problem for quasilinear elliptic equations.
\newblock {\em Communications in Partial Differential Equations},
  27:2449--2490, 2002.

\bibitem{Isakov93}
V.~Isakov.
\newblock On uniqueness in inverse problems for semilinear parabolic equations.
\newblock {\em Archive for Rational Mechanics and Analysis}, 124(1):1--12,
  1993.

\bibitem{victor01}
V.~Isakov.
\newblock Uniqueness of recovery of some quasilinear partial differential
  equations.
\newblock {\em Commun. in partial differential equations}, 26(11,
  12):1947--1973, 2001.

\bibitem{victorN}
V.~Isakov and A.~Nachman.
\newblock Global uniqueness for a two-dimensional semilinear elliptic inverse
  problem.
\newblock {\em Trans.of AMS}, 347:3375--3390, 1995.

\bibitem{isakov1994global}
V.~Isakov and J.~Sylvester.
\newblock Global uniqueness for a semilinear elliptic inverse problem.
\newblock {\em Communications on Pure and Applied Mathematics},
  47(10):1403--1410, 1994.

\bibitem{KKLbook}
A.~Katchalov, Y.~Kurylev, and M.~Lassas.
\newblock {\em Inverse boundary spectral problems}.
\newblock Monographs and Surveys in Pure and Applied Mathematics 123, Chapman
  Hall/CRC, 2001.

\bibitem{kenig2007calderon}
C.~Kenig, J.~Sj{\"o}strand, and G.~Uhlmann.
\newblock The {C}alder{\'o}n problem with partial data.
\newblock {\em Annals of Mathematics}, 165(2):567--591, 2007.

\bibitem{KianSoccorsi}
Y.~Kian and E.~Soccorsi.
\newblock H\"older stably determining the time-dependent electromagnetic
  potential of the {S}chr\"odinger equation.
\newblock {\em SIAM J. Math. Anal.}, 51:627--647, 2019.

\bibitem{KianTetlow}
Y.~Kian and A.~Tetlow.
\newblock H\"older-stable recovery of time-dependent electromagnetic potentials
  appearing in a dynamical anisotropic {S}chr\"odinger equation.
\newblock {\em Inverse Probl Imaging}, 4:819--839, 2020.

\bibitem{KU201909}
K.~Krupchyk and G.~Uhlmann.
\newblock Partial data inverse problems for semilinear elliptic equations with
  gradient nonlinearities.
\newblock {\em Math. Res. Lett.}, 27(6):1801--1824, 2020.

\bibitem{KU201905}
K.~Krupchyk and G.~Uhlmann.
\newblock A remark on partial data inverse problems for semilinear elliptic
  equations.
\newblock {\em Proc. Amer. Math. Soc.}, 148:681--685, 2020.

\bibitem{KLU2018}
Y.~Kurylev, M.~Lassas, and G.~Uhlmann.
\newblock Inverse problems for {L}orentzian manifolds and non-linear hyperbolic
  equations.
\newblock {\em Invent. Math.}, 212(3):781--857, 2018.

\bibitem{LL2019}
R.-Y. Lai and Y.-H. Lin.
\newblock Global uniqueness for the fractional semilinear {S}chr\"odinger
  equation.
\newblock {\em Proceedings of the AMS}, 147:1189--1199, 2019.

\bibitem{LaiL2020}
R.-Y. Lai and Y.-H. Lin.
\newblock Inverse problems for fractional semilinear elliptic equations.
\newblock {\em Nonlinear Analysis}, 216:112699, 2022.

\bibitem{LaiOhm21}
R.-Y. Lai and L.~Ohm.
\newblock Inverse problems for the fractional {L}aplace equation with lower
  order nonlinear perturbations.
\newblock {\em Inverse Problems and Imaging}, 16(2):305--323, 2022.

\bibitem{LUY2020}
R.-Y. Lai, G.~Uhlmann, and Y.~Yang.
\newblock Reconstruction of the collision kernel in the nonlinear {B}oltzmann
  equation.
\newblock {\em SIAM J. Math. Anal.}, 53(1):1049--1069, 2021.

\bibitem{LaiUhlmannZhou}
R.-Y. Lai, G.~Uhlmann, and H.~Zhou.
\newblock Recovery of coefficients in semilinear transport equations.
\newblock {\em arXiv:2207.10194}, 2022.

\bibitem{LaiZhou2020}
R.-Y. Lai and T.~Zhou.
\newblock Partial data inverse problems for nonlinear magnetic {S}chr\"odinger
  equations.
\newblock {\em arXiv:2007.02475}, 2020.

\bibitem{LZ2023}
R.-Y. Lai and T.~Zhou.
\newblock Inverse problems for nonlinear fractional magnetic {S}chr\"odinger
  equation.
\newblock {\em Journal of Differential Equations}, 343:64--89, 2023.

\bibitem{LLLS201905}
M.~Lassas, T.~Liimatainen, Y.-H. Lin, and M.~Salo.
\newblock Partial data inverse problems and simultaneous recovery of boundary
  and coefficients for semilinear elliptic equations.
\newblock {\em Rev. Mat. Iberoam.}, doi: 10.4171/rmi/1242, 2020.

\bibitem{LLLS201903}
M.~Lassas, T.~Liimatainen, Y.-H. Lin, and M.~Salo.
\newblock Inverse problems for elliptic equations with power type
  nonlinearities.
\newblock {\em J. Math. Pures Appl.}, 145:44--82, 2021.

\bibitem{LLPT2020_1}
M.~Lassas, T.~Liimatainen, L.~Potenciano-Machado, and T.~Tyni.
\newblock Uniqueness and stability of an inverse problem for a semi-linear wave
  equation.
\newblock {\em arXiv:2006.13193}, 2020.

\bibitem{LOST2022}
M.~Lassas, L.~Oksanen, S.~K. Sahoo, M.~Salo, and A.~Tetlow.
\newblock Inverse problems for non-linear {S}chr\"odinger equations with
  time-dependent coefficients.
\newblock {\em arXiv:2201.03699}, 2022.

\bibitem{LUW2018}
M.~Lassas, G.~Uhlmann, and Y.~Wang.
\newblock Inverse problems for semilinear wave equations on {L}orentzian
  manifolds.
\newblock {\em Comm. Math. Phys.}, 360(2):555--609, 2018.

\bibitem{Lin202004}
Y.-H. Lin.
\newblock Monotonicity-based inversion of fractional semilinear elliptic
  equations with power type nonlinearities.
\newblock {\em arXiv:2005.07163}, 2020.

\bibitem{Lion}
J.-L. Lions and E.~Magenes.
\newblock {\em Non-Homogeneous Boundary Value Problems and Applications, volume
  2}.
\newblock Springer, 1972.

\bibitem{CKS}
E.~S. M.~Choulli, Y.~Kian.
\newblock Stable determination of time-dependent scalar potential from boundary
  measurements in a periodic quantum waveguide.
\newblock {\em SIAM J. Math. Anal.}, 47:4536--4558, 2015.

\bibitem{Mal}
B.~Malomed.
\newblock {\em Encyclopedia of Nonlinear Science}, chapter Nonlinear
  Schr\"odinger Equations, pages 639--643.
\newblock Scott, Alwyn, 2004.

\bibitem{Na}
A.~Nachman.
\newblock Global uniqueness for a two-dimensional inverse boundary value
  problem.
\newblock {\em Ann. of Math}, 143:71--96, 1996.

\bibitem{SunUhlmann97}
Z.~Sun and G.~Uhlmann.
\newblock Innerse problems in quasilinear anisotropic media.
\newblock {\em American Journal of Mathematics}, 119(4):771--797, 1997.

\bibitem{sylvester1987global}
J.~Sylvester and G.~Uhlmann.
\newblock A global uniqueness theorem for an inverse boundary value problem.
\newblock {\em Annals of Mathematics}, 125(1):153--169, 1987.

\bibitem{UW2020}
G.~Uhlmann and Y.~Wang.
\newblock Determination of space-time structures from gravitational
  perturbations.
\newblock {\em Comm. Pure. Appl. Math.}, 73:1315--1367, 2020.

\end{thebibliography}

\end{document}